\newcommand{\Prob}{\mathrm{Prob}}
\newcommand{\supp}{\mathrm{supp}}
\newcommand{\bord}{\mathrm{Bord }}
\newcommand{\isom}{\mathrm{Isom}}
\newcommand{\overbar}[1]{\mkern 1.5mu\overline{\mkern-1.5mu#1\mkern-1.5mu}\mkern 1.5mu}
\definecolor{uuuuuu}{rgb}{0.26666666666666666,0.26666666666666666,0.26666666666666666}
\theoremstyle{plain}
\newtheorem{theorem}{Theorem} 
\newtheorem{proposition}{Proposition}
\newtheorem{corollary}{Corollary}
\newtheorem{lemma}{Lemma}
\theoremstyle{definition}
\newtheorem{definition}{Definition} 
\newtheorem{example}{Example} 
\newtheorem*{remark}{Remark}
\title{Regularity of the drift for random walks in groups acting on Gromov hyperbolic spaces}
\author {Luís Miguel Sampaio}
\begin{document}

	\maketitle
	
	\begin{abstract}
	    In this work we prove the continuity and existence of large deviations for the drift of random walks on closed separable groups acting by isometries on Gromov hyperbolic spaces. Through the process we refine the multiplicative ergodic theorem of Karlsson and Gouëzel \cite{karlsson2000hilbert} for such spaces. The works goes beyond what is known in the literature by allowing spaces that are not necessarily proper, although they satisfy some good behaviour at the boundary. 
	\end{abstract}
	
	\section{Introduction}
	
	A topic that naturally appears in many areas of mathematics is that of a product of random operators. Realistically, we cannot hope to obtain much information about the product itself as it may end up going to some notion of "infinity", so the natural solution has been to introduce  descriptive information that encodes the asymptotic behaviour of the product. A paradigmatic example of such an encoding quantity is given by the Lyapunov exponents and Oseledets filtration which, through the Oseledets multiplicative ergodic theorem, fully govern the dynamics of random products of matrices.
	
	Following Kingman's ergodic theorem \cite{kingman1968ergodic}, the typical way to device the existence of such average limit quantities tracking the product has been to guarantee they are subadditive. This is the case with Lyapunov exponents in the linear case as well as the drift for semicontractions in metric spaces. 
	
	In a random walk the successive operators are picked identically and independently  at each  step. A natural question to consider is what would happen if we slightly perturb the way in which the choices are made, either by choosing similar operators or changing the probability that each of them is picked. Our first goal in this text  is to prove the continuity of the drift in Gromov hyperbolic spaces with respect  to a random walk on its group  of isometries.
	
	Despite its remarkable applications, Kingman's theorem does not tell us anything about the rate of convergence towards the limiting quantities. The natural way to speak of this rate of convergence when considering average limit quantities is to introduce large deviations. With  that in mind, after presenting the continuity of the drift we focus  on obtaining large deviations for its associated convergence.
	
	In \cite{karlsson2000hilbert}, Karlsson and Gouëzel prove, using the notion of horofunction, that random products of semicontractions acting on a metric space follow a specific direction as they tend towards infinity. Our route towards obtaining large deviations will begin with refining this theorem in the scope of hyperbolic spaces, by fully describing what happens with every horofunction in what we call the hyperbolic multiplicative ergodic theorem (HMET). We will then follow the spectral method of Nagaev \cite{nagaev1957some} to obtain large deviations for the quantities appearing in HMET in an analogue fashion to that of Bougerol in \cite{bougerol1988theoremes}, Duarte and Klein in \cite{duarte2017Klein, duarte2016lyapunov} and Björklund in \cite{bjorklund2010central}. 
	
	Fortunately, the tools developed in the study of large deviations are also what will help us obtain continuity by an argument similar to Baraviera and Duarte in \cite{baraviera2019approximating} where they re-obtain Le Page's theorem \cite{page1989regularite}. For proper spaces, these results have been recently obtained by Aoun and Sert in \cite{aoun2021random}. 
	
	We dedicate the remainder of the introduction to introducing the concepts, problems and results. We will first introduce the geometric aspects and then we shall do our considerations towards random walks.
	
	\subsection{Geometric setting}
	
	Let $X$ be a metric space, define the Gromov product in $X$ as
	\begin{equation*}
		\langle x \, , \, z \rangle_y := \frac{1}{2}\left( d(x,y) + d(z,y) - d(x,z) \right) \hspace{1cm} \forall x,y,z \in X.
	\end{equation*}
	We say that $X$ is a Gromov $\delta$-hyperbolic space, or simply hyperbolic space, if for every $x,y,z$ and $w$ in $X$,
	\begin{equation}
		\label{4pcondition}
		\langle x \, , \, z \rangle_w \geq \min\{ \langle x \, , \, y \rangle_w \, , \, \langle y \, , \, z \rangle_w \} - \delta.
	\end{equation}
	We call (\ref{4pcondition}) the 4-point condition of hyperbolicity or Gromov's inequality.
	
	A metric space $X$ is said to be geodesic if for every two points $x$ and $y$ in $X$, there exists an isometric embedding $\gamma:[0,d(x,y)]\to X$ connecting $x$ to $y$. Throughout the text $X$ will denote a separable, geodesic although not necessarily proper hyperbolic metric space. For geodesic spaces, Gromov hyperbolicity has more geometric flavour (see \cite{das2017geometry}): $X$ is $\delta$-hyperbolic if there exists $\delta>0$ such that for every triangle in $X$, any side is contained in a $3\delta$-neighbourhood of the other two, in other words, geometrically, triangles are thin.
	
	In this text we will be interested in studying the behaviour of sequences approaching infinity in $X$, the natural way to deal with this convergence problem is to consider boundaries. Fortunately, hyperbolic spaces carry a natural boundary, called the Gromov boundary, whilst every metric space admits a compact boundary, called the horofunction compactification. A nice aspect of Gromov spaces is the interplay between these two boundaries, so let us briefly introduce them, although we explore them later with more detail in §\ref{Boundaries}. 
	
	We say that a sequence $(x_n)$ in a hyperbolic space $X$ with basepoint $x_0$ is a Gromov sequence if $\langle x_n\, , \, x_m \rangle_{x_0}$ tends to infinity as $m$ and $n$ tend to infinity. Two Gromov sequences $(x_n)$ and $(y_n)$ are equivalent, $(x_n)\sim (y_n)$, if $\langle x_n\, , \, y_n \rangle_{x_0}$ tends to infinity as $n$ tends to infinity. Gromov's inequality implies that this is an equivalence relation. The Gromov boundary, denoted by $\partial X$, is the set of equivalence classes of Gromov sequences.  Finally, $\isom(X)$, the group of isometries of $X$ naturally acts on $\partial X$ by sending $\xi = [x_n]_\sim$ to $g\xi = [gx_n]_\sim$. 

	The Gromov product in $X$ may be extended to its Gromov boundary: given $\xi, \eta \in \partial X$ and $y,z\in X$, let
	\begin{align*}
		\langle \xi \, , \, \eta \rangle_z & := \inf \left\{\liminf_{n,m\to \infty}\, \langle x_n \, , \, y_m \rangle_z \, : \, (x_n)\in \xi, \, (y_m)\in \eta \right\},\\
		\langle x \, , \, \xi \rangle_z &= \langle \xi \, , \, x \rangle_z := \inf \left\{\liminf_{n\to \infty} \, \langle x_n \, , \, x \rangle_z \, : \, (x_n)\in \xi \right\}.
	\end{align*}
	
	Denote by $\bord X$ the set $X\cup \partial X$. Given $1 < b \leq 2^\frac{1}{\delta}$ consider the symmetric map $\rho_b: \bord X \times \bord X \to \mathbb{R}$ given by
	\begin{equation*}
		\rho_b(\xi\, , \, \eta) = b^{-\langle \xi \, , \, \eta \rangle_{x_0}}.
	\end{equation*} 
	Using the Gromov inequality, for every $\xi,\eta,\zeta \in \bord X$
	\begin{equation*}
		\rho_b(\xi\, , \, \eta) \leq 2 \max \{ \rho_b(\xi\, , \, \zeta), \rho_b(\zeta\, , \, \eta)\}.
	\end{equation*}
	By the classic Frink metrization theorem (see \cite{frink1937distance}), the map $\bar{D}_b: \bord X \times \bord X\to \mathbb{R}$ given by
	\begin{equation*}
		\bar{D}_b(\xi \, , \, \eta) = \inf \sum_{i=0}^{n-1} \rho_b(\xi_i\, , \, \xi_{i+1}) 
	\end{equation*}
	where the infimum is taken over finite sequences of points $\xi_i$ such that $\xi_0= \xi$ and $\xi_n = \eta$, satisfies the triangle inequality. Moreover the following visibility condition holds:
	\begin{equation*}
		\rho_b(\xi\, , \, \eta)/4 \leq \bar{D}_b(\xi \, , \, \eta) \leq \rho_b(\xi\, , \, \eta) \textrm{ for every } \xi, \eta \in \partial X.
	\end{equation*}
	Restricting $\bar{D}_b$ to $\partial X$ yields a metric, as a consequence, $\partial X$ is bounded. Recall however that it won't be compact if the space is not proper. With $\bar{D}_b$ we can construct one additional metric in $\bord X$ given by the following proposition.
	
	\begin{proposition}[in \cite{das2017geometry}]
		For every $\xi, \eta \in \bord X$ let
		\begin{equation*}
			D_b(\xi, \eta) := \min \left\{ \log(b)d(\xi \, , \, \eta) \, ; \,  \bar{D}_b(\xi \, , \, \eta) \right\},
		\end{equation*}
		using the convention $d(\xi, \eta)=\infty$ if either $\xi$ or $\eta$ belong to $\partial X$. Then $D_b$ is a complete metric in $Bord X$, inducing in $X$ the same topology as the metric $d$.
	\end{proposition}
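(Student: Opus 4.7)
The plan is to check that $D_b$ satisfies the metric axioms, that its restriction to $X$ induces the same topology as $d$, and that $(\bord X, D_b)$ is complete. Non-negativity, symmetry, and $D_b(\xi,\xi)=0$ are immediate (the empty chain yields $\bar D_b(\xi,\xi)=0$). Identity of indiscernibles for $\xi\neq\eta$ splits into cases: if both points lie in $X$ then $d(\xi,\eta)>0$ suffices; otherwise the convention $d(\xi,\eta)=\infty$ reduces the claim to $\bar D_b(\xi,\eta)>0$, which follows from the visibility inequality $\rho_b/4\leq\bar D_b$ on $\partial X$ and from a first-link lower bound in the mixed case.

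The triangle inequality $D_b(\xi,\eta)\leq D_b(\xi,\zeta)+D_b(\zeta,\eta)$ is the main obstacle, since a pointwise minimum of two pseudo-metrics is not in general a pseudo-metric. I would proceed by case analysis on which of $\log(b)\,d$ and $\bar D_b$ attains the minimum on each of the two right-hand edges. When both are realized by $\bar D_b$, the triangle inequality of $\bar D_b$ and the bound $D_b\leq\bar D_b$ suffice; when both are realized by $\log(b)\,d$ (which forces all three points into $X$), one uses the triangle inequality of $d$ together with $D_b\leq\log(b)\,d$. The subtle case is the mixed one. A clean way to handle it is to introduce the auxiliary Frink-type chain infimum
\begin{equation*}
    \tilde D_b(\xi,\eta) \;:=\; \inf \sum_{i=0}^{n-1}\min\bigl\{\log(b)\,d(\xi_i,\xi_{i+1}),\; \rho_b(\xi_i,\xi_{i+1})\bigr\},
\end{equation*}
where the infimum runs over finite chains joining $\xi$ to $\eta$ in $\bord X$. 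By Frink's construction, $\tilde D_b$ is automatically a pseudo-metric. The task then becomes to show $\tilde D_b = D_b$: the inequality $\tilde D_b \leq D_b$ is obtained by comparing against either the direct one-link chain (yielding $\tilde D_b \leq \rho_b$ and, after iterating over optimal $\rho_b$-chains, $\tilde D_b \leq \bar D_b$) or a geodesic subdivision in $X$ (yielding $\tilde D_b \leq \log(b)\,d$); the reverse inequality follows by noting that each $\min$-term in a chain is bounded below by either $\rho_b$ or $\log(b)\,d$, so summing in either direction recovers $\bar D_b$ or $\log(b)\,d$ between the endpoints.

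Once $D_b$ is known to be a metric, topology and completeness are more routine. One direction of the topology on $X$ is immediate from $D_b\leq\log(b)\,d$. For the converse, given $x\in X$ I would use the basic inequality $\langle x,y\rangle_{x_0}\leq d(x,x_0)$, which gives $\rho_b(x,y)\geq b^{-d(x,x_0)}$ for every $y\in\bord X$; thus any chain from $x$ to a different point has its first link of $\rho_b$-size at least $b^{-d(x,x_0)}$, so $\bar D_b(x,y)\geq b^{-d(x,x_0)}$ for $y\neq x$, and $D_b$ coincides with $\log(b)\,d$ on the $d$-ball of radius $b^{-d(x,x_0)}/\log(b)$ around $x$, which gives the topology match. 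For completeness, a dichotomy handles any $D_b$-Cauchy sequence $(\xi_n)$: either it is eventually $d$-bounded in $X$, hence $d$-Cauchy by the local topology agreement and convergent in $X$ by completeness of $d$; or $d(\xi_n,\xi_m)\to\infty$ along some indices, forcing $D_b=\bar D_b$ on those pairs so that $\bar D_b(\xi_n,\xi_m)\to 0$, from which a Frink-type lower bound $\rho_b\lesssim \bar D_b$ makes $(\xi_n)$ a Gromov sequence converging to a point in $\partial X$, with convergence in $D_b$ confirmed by the visibility condition.
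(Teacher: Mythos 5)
This proposition is imported verbatim from \cite{das2017geometry}; the paper states it twice (in the introduction and in \S 2.2) but does not prove it, so there is no internal argument to compare yours against. Evaluating your outline on its own terms, the critical step has a genuine gap: the triangle inequality hinges on the claimed identity $\tilde D_b = D_b$, and your justification for $\tilde D_b \geq D_b$ does not hold up. You write that ``each $\min$-term in a chain is bounded below by either $\rho_b$ or $\log(b)\,d$,'' but a minimum is bounded \emph{above}, not below, by its two arguments; reading charitably as ``each term \emph{equals} one of the two,'' the conclusion that ``summing in either direction recovers $\bar D_b$ or $\log(b)\,d$ between the endpoints'' still fails for a chain that mixes links of both types. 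For such a chain the partial sums do not telescope to $\log(b)\,d(\xi,\eta)$, nor do they dominate any $\rho_b$-chain sum from $\xi$ to $\eta$, so neither lower bound is recovered. This is precisely where the hyperbolic geometry has to enter (one must show, roughly, that boundary shortcuts cannot undercut geodesic distance by more than the visibility constants allow), and it is the technical heart of the argument in \cite{das2017geometry}; asserting the comparison without it leaves the triangle inequality unproven.

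Two smaller points. For identity of indiscernibles you claim that when both points lie in $X$, $d(\xi,\eta)>0$ ``suffices,'' but $D_b$ is a minimum, so you also need $\bar D_b(\xi,\eta)>0$; your later estimate $\bar D_b(x,y)\geq b^{-d(x,x_0)}$ in the topology paragraph supplies exactly this, so it should be invoked here rather than omitted. In the completeness argument you appeal to ``completeness of $d$'' and to the visibility lower bound $\rho_b/4\leq \bar D_b$; the paper states the visibility inequality only for pairs in $\partial X$, and it never explicitly asserts that $(X,d)$ is complete, so both of these inputs need to be either justified or flagged as standing hypotheses.
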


	From the visibility condition $D_b \leq 1$, in particular, $\bord X$ is a bounded space when equipped with this metric. The main drawback of this construction however is that in general $\bord X$ does not have to be compact. To combat this problem consider now the injection
	\begin{align*}
		\rho: X & \to C(X) \\
		x & \mapsto h_x( \cdot ) = d(\cdot , x) - d(x,x_0),
	\end{align*}
	where $C(X)\subset \mathbb{R}^X$ is endowed with the topology of pointwise convergence. Then  $X^h := \overbar{\rho(X)}$ is compact. We call the elements of $X^h$ horofunctions and  $X^h$ the horofunction compactification of $X$. The action of $\isom(X)$ on $X$ extends to its horofunction boundary as follows: for every $h \in X^h$, $g\in G$ and $z\in X$,
	\begin{equation}
		\label{horoAction}
		g\cdot h(z)= h(g^{-1}z)-h(g^{-1}x_0).
	\end{equation}
	The horofunction compactification can be partitioned into its finite and infinite $G$-invariant parts given, respectively, by $X_F^h := \{h\in X^h \, : \, \inf(h)>-\infty \}$ and $X_\infty^h := \{h \in X^h \, : \, \inf(h)= -\infty \}$.
	
	Having two boundaries, it is useful to interpret how they relate to one another. In particular we will see that $X_\infty^h$ consists of horofunctions arising as limits of sequences $(h_{x_n})$ where the sequence $(x_n)$ is Gromov. With this in mind, we consider a map $\phi: X_\infty^h \to \partial X$, known as the local minimum map that sends an horofunction to the equivalence class of Gromov sequences that gives rise to it. The major properties of the local minimum map can be found in \cite{maher2018random}, namely it is surjective, continuous and $G$-equivariant.
	
	\begin{definition}{(Basic Assumption)}
		We say a hyperbolic space $X$ satisfies the basic assumption (BA) if the local minimum map is a homeomorphism.
	\end{definition}

	In other words, in hyperbolic spaces satisfying (BA) we are allowed to work interchangeably between the two boundaries. We are mostly interested in $X^h$ for compacity arguments and $\partial X$ for metric ones. Henceforth all our hyperbolic spaces satisfy (BA). This includes strong hyperbolic spaces such as CAT(-1) spaces and hyperbolic groups with a Green metric (see \cite{nica2016strong}.
	
	Finally we equip $\isom(X)$ with a metric that tracks the behaviour of its action in $\bord X$. With effect, given $1<b\leq 2^{1/ \delta}$, take
	\begin{equation*}
		d_G(g_1,g_2) := \max \left\{ \sup_{\xi \in \bord X} D_b(g_1\xi, g_2\xi) \, ; \, \sup_{\xi \in \bord X} D_b(g_1^{-1}\xi, g_2^{-1}\xi) \right\},
	\end{equation*}
	for every $g_1, g_2 \in \isom(X)$. We will prove that $\isom(X)$ is a topological group when equipped with $d_G$, in particular, $d_G$ is a distance. Notice that since $D_b$ is a complete metric, then $\isom(X)$ is complete.
	
	\subsection{Random setting}
	
	Let $\Prob(M)$ and $\Prob_c(M)$ denote, respectively, the space of Borel probability measures and its subspace of with Borel probability measures with compact support in some metric space $M$. Let $G$ be a topological group acting on a metric space $M$, we define the convolution
	\begin{align*}
		\star: \Prob_c(G)\times \Prob(M) &\to \Prob(M) \\
		(\mu , \nu) &\mapsto \mu\star \nu := \int_G g\nu d\mu(g), 
	\end{align*}
	where $g\nu$ is the pushforward of $\nu$ under the action of $g$, in other words, the convolution is the average of the pushforwards with respect to $\mu$. In particular, $G$ acts on itself on the right, this allows for the definition of $\mu^n$ for every $\mu \in \Prob_c(G)$ as the $n-th$ convolution of $\mu$ with itself. Let us stress the fact we are considering $G$ acting on itself through the right action. As a side-note recall that if both $\mu$ and $\nu$ have compact support then so does $\mu \star \nu$.
	
	Working in the degree of generality introduced in the previous section, a problem could arise here. We need $G$ to be second countable in order for the support of a measure in $\Prob(G)$ to be well defined. Whence we restrict our attention to closed separable groups of isometries $G\subset \isom(X)$. Notice that since $\isom(X)$ is complete for the metric $d_G$, so is $G$. This gives for particularly well behaved measures in $G$. With this in mind, henceforward $G$ always stands for a closed separable groups of isometries acting on a hyperbolic space $X$.
	
	Let $\mu \in \Prob_c(G)$ then we will consider the product measure $\mu^\mathbb{N}$ which has compact support in $\Omega = G^\mathbb{N}$. Given $\omega = (g_0, g_1, ..., g_n, ...) \in \Omega$ we set the notation 
	\begin{equation*}
		\omega^n = g_0g_1...g_{n-1},
	\end{equation*} 
	as well as defining the Bernoulli shift  $T:\Omega \to \Omega$  sending $\omega = (g_0, g_1, ... g_n, ...)$ to $T\omega = (g_1,...,g_n,...)$. We will denote by $\omega^{-n}$ the inverse of $\omega^n$, that is, $(\omega^n)^{-1}$.
	
	Given $\mu\in \Prob_c(G)$, by compacity we have
	\begin{equation*}
		\int_G d(gx_0, x_0) d\mu(g) <\infty,
	\end{equation*}
	so we can define the drift
	\begin{equation*}
		\ell(\mu) := \lim_{n\to \infty} \frac{1}{n} \int_G d(gx_0, x_0) d\mu^n(g) =  \lim_{n\to \infty} \frac{1}{n} \int_G d(\omega^nx_0, x_0) d\mu^\mathbb{N}(\omega). 
	\end{equation*}
	Notice that the measurability of the integrand functions follows from continuity. Moreover, due to ergodicity, by Kingman's Ergodic Theorem the limit is $\mu^\mathbb{N}$-almost surely equal to the limit of $d(\omega^n x_0, x_0)/n$.
	
	Our goal in this text is to understand the behaviour of $\ell(\mu)$ with respect to both small perturbations in $\mu$ as well as its convergence rate underlying Kingman's theorem. To understand what we mean by small perturbations in $\mu$ we shall introduce the  Hölder Wasserstein distance whilst for the second we introduce the large deviations estimates.
	
	Let $L^\infty(G)$ stand for the space of Borel measurable functions $\varphi: G\to \mathbb{R}$ bounded in the sup norm. For every $0< \alpha\leq 1$ and $\varphi\in L^\infty(G)$ define the $\alpha$-Hölder constant
	\begin{equation*}
		\upsilon_\alpha^G(\varphi) := \sup_{g, g'\in G, \, g\neq g'} \frac{|\varphi(g)-\varphi(g')|}{d_G(g,g')^\alpha}.
	\end{equation*}
	For every $\mu, \nu\in \Prob_c(G)$, we define the Wasserstein distance between them as
	\begin{equation*}
		W_\alpha(\mu, \nu) = \sup_{\varphi \in L^\infty(G), \, \upsilon_\alpha^G(\varphi)\leq 1} \left| \int_G \varphi d\mu - \int_G \varphi d\nu \right|.
	\end{equation*}
	A detailed discussion on Wasserstein distances can be found in \cite{villani2009optimal}. One now feels tempted to consider the continuity of the drift with respect to the Wasserstein distance. This may be problematic as we could consider very close measures $\mu$ and $\mu_1$ where the support of $\mu_1$ contains elements that expand an arbitrarily large quantity which are picked with an equally small probability. Then the distance between $\mu$ and $\mu_1$ would be small but the drift could be incommensurably different. To work around this problem, given $1< b \leq 2^{1/\delta}$ for every $\lambda>0$ set
	\begin{equation*}
		G_\lambda := \left\{g \in G \, : \, b^{d(gx_0, x_0)}< \lambda \right\}.
	\end{equation*}

	Having defined $G_\lambda$ and the Weisserstein distance, we can describe what we mean by continuity. With effect we want to study the continuity of the map $\ell:\Prob_c(G_\lambda) \to \mathbb{R}$ given by $\mu \mapsto \ell(\mu)$ with respect to the Wasserstein distance. As for large deviations estimates, we are trying to prove a result of the following nature.
	
	\begin{definition}[Large deviations estimates]
		We say that a measure $\mu\in \Prob(G)$ satisfies a large deviation estimate if there are constants $r>0$, $C>0$ and for every small enough $\varepsilon>0$ there exists $n_0 = n_0(\varepsilon)$ such that for every $n>n_0$
		\begin{equation*}
			\mu^\mathbb{N}\left\{\omega \in \Omega \, : \, \left|  \frac{1}{n}d(\omega^nx_0, x_0) - \ell(\mu) \right| > \varepsilon \right\} < e^{-C\varepsilon^2n}.
		\end{equation*}
	\end{definition}

	The drift is related with the intuitive notion of walking towards infinity of the space at a certain speed. This is exactly why we introduced boundaries in the previous section. The following theorem by Karlsson and Gouëzel makes this notion precise by guaranteeing the existence of a horofunction that tracks the process. In other words, there exists a specific direction the trajectory follows towards the boundary.
	
	\begin{theorem}[Karlsson - Gouëzel in \cite{gouezel2020subadditive}]
		\label{Karlsson-Gouëzel}
		Let $\mu\in \Prob_c(G)$, for $\mu^\mathbb{N}$-almost every $\omega \in \Omega$ there exists an horofunction $h_\omega \in X^h$ such that for every $x$
		\begin{equation*}
			\lim_{n\to \infty} \frac{1}{n} h_\omega( \omega^n x) = -\ell(\mu).
		\end{equation*}
		Moreover, if $X$ is separable and $\Omega$ is a standard probability space, one can choose the map $\omega \mapsto h_\omega$ to be Borel measurable.
	\end{theorem}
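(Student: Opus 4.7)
My plan is to combine Kingman's subadditive ergodic theorem with the sequential compactness of the horofunction compactification $X^h$, constructing $h_\omega$ as an accumulation point of the horofunctions $h_{\omega^{n_k} x_0}$ along a well-chosen subsequence of times. Let $a_n(\omega) := d(\omega^n x_0, x_0)$. The identity $\omega^{n+m} = \omega^n (T^n \omega)^m$ together with the triangle inequality shows $(a_n)$ is a subadditive cocycle over the ergodic Bernoulli shift $T$ on $(\Omega, \mu^\mathbb{N})$, and Kingman's theorem yields $a_n(\omega)/n \to \ell(\mu)$ for $\mu^\mathbb{N}$-a.e. $\omega$. Since $X^h$ is compact for the topology of pointwise convergence, the sequence $\{h_{\omega^n x_0}\}$ automatically admits accumulation points for every $\omega$; the content of the theorem is to \emph{select} one whose values along the trajectory realise the drift $-\ell(\mu)$.

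The mechanism for this selection is a Karlsson-type regular-times lemma: for $\mu^\mathbb{N}$-a.e. $\omega$ and every $\epsilon > 0$, the set
\begin{equation*}
E_\epsilon(\omega) := \bigl\{ n \in \mathbb{N} : a_{n+k}(\omega) \geq a_n(\omega) + (\ell(\mu) - \epsilon)\, k \text{ for every } k \geq 0 \bigr\}
\end{equation*}
has density at least $1 - O(\epsilon)$, and in particular is infinite. The proof is a contradiction argument against Kingman: a shortage of regular times would allow long stretches on which the cocycle grows at rate below $\ell - \epsilon$, dragging $a_n/n$ below $\ell$. Choosing $\epsilon_k \downarrow 0$ and $n_k \in E_{\epsilon_k}(\omega)$, then diagonally extracting a sub-subsequence along which $h_{\omega^{n_k} x_0}$ converges pointwise in $X^h$ to a horofunction $h_\omega$, one exploits the cocycle identity
\begin{equation*}
h_{\omega^{n_k} x_0}(\omega^n x_0) = a_{n_k - n}(T^n \omega) - a_{n_k}(\omega)
\end{equation*}
combined with the forward regularity at $n_k$ (giving near-equality in subadditivity) and Kingman's limit for the typical shift $T^n \omega$ to estimate $h_{\omega^{n_k} x_0}(\omega^n x_0) \leq -a_n(\omega) + E(k,n)$, with the error $E(k,n)$ controlled by $\epsilon_k \cdot (n_k - n)$. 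A careful calibration of the pair $(n_k, \epsilon_k)$ keeps this error sublinear in $n$ after taking $k\to\infty$, giving the matching upper bound $\limsup_n h_\omega(\omega^n x_0)/n \leq -\ell(\mu)$. The trivial lower bound $h_\omega(\omega^n x_0) \geq -a_n(\omega)$ combined with Kingman for $\omega$ supplies $\liminf_n h_\omega(\omega^n x_0)/n \geq -\ell(\mu)$, completing the convergence. Extension from $x_0$ to an arbitrary $x \in X$ is immediate because every horofunction is $1$-Lipschitz, so $|h_\omega(\omega^n x) - h_\omega(\omega^n x_0)| \leq d(x, x_0)$ is uniform in $n$.

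For the Borel measurability statement, the set
\begin{equation*}
A := \bigl\{ (\omega, h) \in \Omega \times X^h : \lim_{n \to \infty} h(\omega^n x_0)/n = -\ell(\mu) \bigr\}
\end{equation*}
is Borel in a standard Borel product (since separability of $X$ makes $X^h$ Polish), and its $\omega$-projection has full $\mu^\mathbb{N}$-measure by the existence part. A classical measurable selection theorem (Kuratowski--Ryll-Nardzewski, or Jankov--von Neumann) then produces the desired Borel section $\omega \mapsto h_\omega$. The main obstacle of the whole argument is the calibration step inside the upper bound: Kingman provides only averaged control of $a_n/n$, while a pointwise bound on $h_\omega$ along the forward trajectory requires simultaneous near-additivity of the cocycle at $\omega$ and at every shift $T^n \omega$ with $n \leq n_k$. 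The regular-times lemma, together with a diagonal extraction that prevents $\epsilon_k (n_k - n)$ from dominating $n$, is precisely the device that enforces this compatibility.
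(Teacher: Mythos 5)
The paper does not prove this theorem; it cites it from Gou\"ezel--Karlsson \cite{gouezel2020subadditive}, so there is no paper-internal proof to compare against. Your sketch is essentially the older Karlsson--Ledrappier ``regular times'' argument, and it contains precisely the gap that the cited paper was written to close. The problem is the calibration step you yourself flag as the main obstacle. The quantity to be bounded, $h_{\omega^{n_k}x_0}(\omega^n x_0) = a_{n_k-n}(T^n\omega) - a_{n_k}(\omega)$, requires near-equality in the subadditive inequality $a_{n_k}(\omega) \leq a_n(\omega) + a_{n_k-n}(T^n\omega)$, i.e.\ near-additivity when the cocycle is split at the running index $n$, with $n < n_k$. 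But forward regularity at $n_k$ (your $n_k \in E_{\epsilon_k}(\omega)$) only controls the subadditivity defect when splitting at $n_k$ itself, for times beyond $n_k$; it says nothing about the split at $n$. If one instead tries forward regularity at $n$ together with Kingman for $T^n\omega$, one obtains $h_{\omega^{n_k}x_0}(\omega^n x_0) \leq -a_n(\omega) + (\epsilon + \epsilon')(n_k-n)$, and this error does not vanish for fixed $n$ as $k\to\infty$: Kingman gives $a_m(T^n\omega) - \ell m = o(m)$, not $o(n)$, and $n_k - n \to \infty$, so $\epsilon'(n_k - n)$ cannot be made $o(n)$ for a fixed $n$. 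Nor can one arrange $\epsilon_k n_k$ bounded, since the density estimate for $E_\epsilon(\omega)$ is only asymptotic and gives no control on the location of the smallest regular time.

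This is a missing idea, not a missing detail. Gou\"ezel--Karlsson replace the forward regular-times lemma by a genuinely different one: for a.e.\ $\omega$ and every $\delta>0$ there exist $n_i\to\infty$ and $\delta_i\to 0$ such that $a_{n_i}(\omega) - a_{n_i-k}(T^k\omega) \geq (\ell-\delta)k - \delta_i n_i$ holds \emph{simultaneously for all} $0\leq k\leq n_i$. This controls the subadditivity defect at every split below $n_i$, which is what the horofunction estimate actually needs, and the subsequent selection of $h_\omega$ is correspondingly more delicate than taking an arbitrary pointwise accumulation point of $h_{\omega^{n_k}x_0}$. The forward regular-times set, whose elements only control what happens after them, cannot be massaged into this. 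Your proposal would need to be rebuilt around such a uniform defect lemma to be correct.
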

	
	As a remark, Karlsson and Gouëzel's Theorem is in fact way more general than what we stated, it holds for every cocycle on the group of semicontractions over any ergodic transformation. 
	
	\subsection{Results}	
	
	The first result concerns the aforementioned metric structure of $G$.
	
	\begin{theorem}
		\label{metricGroup}
		Let $X$ be a Gromov hyperbolic space and $\isom(X)$ its group isometries, then $d_G$ is a metric in $\isom(X)$. Moreover $(\isom(X), d_G)$ is a topological group.
	\end{theorem}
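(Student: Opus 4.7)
The plan is to verify the metric axioms for $d_G$ and then establish continuity of composition and inversion separately. Symmetry of $d_G$ and the identity $d_G(g,g)=0$ are immediate from the symmetric definition and from $D_b$ being a metric on $\bord X$. For non-degeneracy, $d_G(g_1,g_2)=0$ forces $g_1\xi=g_2\xi$ for every $\xi\in\bord X$; restricting to $\xi\in X$ yields $g_1=g_2$ as isometries. The triangle inequality follows by applying $D_b(g_1\xi,g_3\xi)\leq D_b(g_1\xi,g_2\xi)+D_b(g_2\xi,g_3\xi)$ pointwise, taking $\sup_\xi$, and then taking the max with the analogous bound for the inverses, using that $\max\{A+A',B+B'\}\leq \max\{A,B\}+\max\{A',B'\}$.

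The technical heart of the proof is a Lipschitz estimate for the action of an isometry on $(\bord X, D_b)$: for every $g\in\isom(X)$ and $\xi,\eta\in\bord X$,
\[
D_b(g\xi,g\eta)\leq b^{d(gx_0,x_0)}\, D_b(\xi,\eta).
\]
This rests on the basepoint-change identity
\[
\langle g\xi,g\eta\rangle_{x_0}=\langle \xi,\eta\rangle_{g^{-1}x_0}\geq \langle \xi,\eta\rangle_{x_0}-d(gx_0,x_0),
\]
which gives $\rho_b(g\xi,g\eta)\leq b^{d(gx_0,x_0)}\rho_b(\xi,\eta)$. The constant then propagates to $\bar D_b$ through the infimum-of-chains definition (using that $g$ is a bijection of $\bord X$), and to $D_b$ through the $\min$, using that $g$ is a $d$-isometry and $b^{d(gx_0,x_0)}\geq 1$.

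Given this estimate, inversion is tautologically an isometry of $d_G$. For multiplication, I split
\[
d_G(g_n h_n,gh)\leq d_G(g_n h_n, g_n h) + d_G(g_n h, gh),
\]
and bound each of the four inner suprema by either re-parameterizing $\xi\mapsto h\xi$ or $\xi\mapsto g_n^{-1}\xi$ (which are bijections of $\bord X$) to collapse the expression to a plain $d_G$, or by applying the Lipschitz estimate above to pull the outer isometry out of the supremum. The resulting bound is
\[
d_G(g_n h_n, gh) \leq b^{d(g_n x_0, x_0)}\, d_G(h_n,h) + b^{d(h x_0, x_0)}\, d_G(g_n,g).
\]
Since convergence in $d_G$ implies $g_n x_0 \to g x_0$ in $X$ (the $d_G$ and $d$ topologies agree on $X$ via the topology of $D_b$), the factor $b^{d(g_n x_0, x_0)}$ stays bounded, and both terms tend to zero. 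The main obstacle is the Lipschitz estimate itself: because $\rho_b$ only satisfies a weak triangle inequality, the passage through $\bar D_b$ and then $D_b$ must be done carefully, checking separately the two cases in the $\min$ that defines $D_b$ depending on whether $\xi,\eta$ lie in $X$ or in $\partial X$.
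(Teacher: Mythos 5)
Your argument is correct, and while it follows the same broad outline as the paper (verify the metric axioms, establish a Lipschitz estimate for the action of a single isometry on $(\bord X, D_b)$, then propagate that through a triangle-inequality decomposition to get continuity), the key lemma is obtained by a genuinely different and more elementary route. The paper invokes Proposition~\ref{VisualMetric}, which is a two-sided estimate
\[
C(\delta)^{-1}\, b^{-\frac{1}{2}[h_\xi(g^{-1}x_0)+h_\eta(g^{-1}x_0)]} \leq \frac{\bar D_b(g\xi,g\eta)}{\bar D_b(\xi,\eta)} \leq C(\delta)\, b^{-\frac{1}{2}[h_\xi(g^{-1}x_0)+h_\eta(g^{-1}x_0)]},
\]
whose proof uses Gromov hyperbolicity through Lemmas~\ref{horobuse1} and~\ref{horobuse2} and costs a factor $C(\delta)=4b^{6\delta}$. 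You instead obtain the one-sided bound $D_b(g\xi,g\eta)\leq b^{d(gx_0,x_0)}D_b(\xi,\eta)$ directly from the basepoint-change inequality $\langle\xi,\eta\rangle_{g^{-1}x_0}\geq\langle\xi,\eta\rangle_{x_0}-d(gx_0,x_0)$, which needs only the triangle inequality and not hyperbolicity. Passing this through the chain infimum and the $\min$ defining $D_b$ is routine, as you note, and yields a cleaner, $\delta$-free Lipschitz constant. The subsequent decomposition is also organized slightly differently: the paper works with $d_G(g^{-1}g',g_1^{-1}g_1')$, whereas you split $d_G(g_nh_n,gh)$ into $d_G(g_nh_n,g_nh)+d_G(g_nh,gh)$ and control each of the four suprema either by a reparametrization $\xi\mapsto h\xi$ or $\xi\mapsto g_n^{-1}\xi$ (which collapses the supremum to a plain $d_G$) or by applying the Lipschitz estimate to the outer isometry. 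This is careful: it is precisely the interplay between the two suprema in the $\max$ defining $d_G$ that makes the naive ``cancel the common factor'' step delicate, and your handling of it is cleaner than the paper's terse ``clearly'' step. Your treatment of inversion as an automatic isometry of $d_G$ (symmetry of the definition) and of multiplication separately is logically equivalent to the paper's continuity of $(g,g')\mapsto g^{-1}g'$. In short: same skeleton, but with a tighter, more elementary Lipschitz lemma and a somewhat more transparent bookkeeping of the two suprema.
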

	
%
%
	
 	Next we present our a multiplicative ergodic theorem type result for the drift adding information to Theorem \ref{Karlsson-Gouëzel}:
	
	\begin{theorem}[Hyperbolic Multiplicative Ergodic Theorem]
		\label{hmet}
		Let $X$ be a hyperbolic space and $G$ be a closed separable group of isometries of $X$. Given $\mu \in \Prob_c(G)$ with $\ell(\mu)>0$. For $\mu^\mathbb{N}$-almost every $\omega=(g_0,g_1,...,g_n,...)$ in $\Omega$ there is a filtration of the horofunction boundary in subsets
		\begin{equation*}
			X_-^h(\omega) \subset  X_+^h(\omega) = X^h,
		\end{equation*} 
		such that:
		\begin{enumerate}
			\item for every $h\in X_+^h(\omega) \backslash X_-^h(\omega) $
			\begin{equation*}
				\lim_{n\to \infty} \frac{1}{n} h( \omega^n x_0) = \ell(\mu);
			\end{equation*}
			\item for every $h \in X_-^h(\omega)$
			\begin{equation*}
				\lim_{n\to \infty} \frac{1}{n} h( \omega^n x_0) = -\ell(\mu),
			\end{equation*}
			and $X_-^h(\omega)$ consists of a single horofunction.
		\end{enumerate}
		In addition, the filtration is measurable with respect to the completion of the Borel $\sigma$-algebra in $(G,d_G)$. Moreover, the decomposition is $G-$invariant, that is, denoting by $T$ the Bernoulli shift,
		\begin{equation*}
			g_0\cdot X_{-}^h(\omega) = X_{-}^h(T\omega).
		\end{equation*}
	\end{theorem}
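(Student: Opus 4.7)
The plan is to build the filtration around the single horofunction produced by Theorem~\ref{Karlsson-Gouëzel}. That theorem provides, for $\mu^{\mathbb{N}}$-almost every $\omega$, a Borel-measurable selection $h_\omega\in X^h$ with $\frac{1}{n} h_\omega(\omega^n x_0)\to -\ell(\mu)$. Since $\ell(\mu)>0$, $h_\omega(\omega^n x_0)\to -\infty$ and hence $h_\omega\in X_\infty^h$; under hypothesis (BA) we attach to it the boundary point $\xi_\omega := \phi(h_\omega)\in \partial X$. I would set $X_-^h(\omega):= \{h_\omega\}$ and $X_+^h(\omega):=X^h$, and prove as a preliminary lemma that $(\omega^n x_0)$ is a Gromov sequence representing $\xi_\omega$: combining the up-to-$O(\delta)$ comparison
\begin{equation*}
h_\eta(y) = d(y,x_0) - 2\langle y,\eta\rangle_{x_0}+O(\delta), \quad \eta\in \partial X,
\end{equation*}
with Kingman's theorem $\frac{1}{n}d(\omega^n x_0, x_0) \to \ell(\mu)$, one reads off $\frac{1}{n}\langle \omega^n x_0, \xi_\omega\rangle_{x_0}\to \ell(\mu)>0$, which forces the Gromov convergence $\omega^n x_0 \to \xi_\omega$.

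The core of the proof is to show that for every $h\in X^h\setminus \{h_\omega\}$ one has $\frac{1}{n} h(\omega^n x_0)\to +\ell(\mu)$; because $\ell(\mu)>0$, this simultaneously establishes item (1) and forces the uniqueness asserted in item (2) (a second horofunction with the $-\ell(\mu)$ asymptotic would contradict the $+\ell(\mu)$ limit). I would split into two cases. If $h\in X_\infty^h$, write $\eta := \phi(h)\in \partial X$; since $\phi$ is a homeomorphism by (BA) and $h\neq h_\omega$, necessarily $\eta\neq \xi_\omega$, and the visibility bound $\rho_b(\xi,\eta)/4\leq\bar{D}_b(\xi,\eta)\leq \rho_b(\xi,\eta)$ combined with $\omega^n x_0\to \xi_\omega$ in $\partial X$ yields $\langle \omega^n x_0, \eta\rangle_{x_0}$ uniformly bounded in $n$; substituting into the Busemann-type identity above gives the desired limit. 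If instead $h\in X_F^h$, a direct estimate in hyperbolic geometry exploiting finiteness of $\inf h$ (using the bounded near-minimizing set of $h$ together with the triangle inequality) gives $h(y) = d(y,x_0) + O(1)$ uniformly in $y$, so again dividing by $n$ yields $\ell(\mu)$.

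Measurability of the filtration reduces to Borel measurability of the singleton-valued map $\omega\mapsto h_\omega$, which is precisely the measurable selection provided by Theorem~\ref{Karlsson-Gouëzel}. For $G$-equivariance I would apply (\ref{horoAction}) to compute
\begin{equation*}
(g_0^{-1}\cdot h_\omega)((T\omega)^n x_0)= h_\omega(\omega^{n+1} x_0)-h_\omega(g_0 x_0),
\end{equation*}
which grows like $-\ell(\mu)n$; thus $g_0^{-1}\cdot h_\omega$ satisfies the Karlsson--Gouëzel asymptotic for the shifted walk $T\omega$, and the uniqueness proved above identifies the translated horofunction with $h_{T\omega}$, delivering the stated $G$-equivariance of the decomposition.

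The main technical obstacle is the Busemann-type comparison for horofunctions at boundary points, together with the uniform boundedness of $\langle \omega^n x_0,\eta\rangle_{x_0}$ when $\eta\neq\xi_\omega$. This is exactly where hypothesis (BA) earns its keep: it lets us transport estimates between the Gromov boundary, where $\bar{D}_b$ and the visibility condition give genuine metric control, and the horofunction boundary, where convergence is only pointwise. In the non-proper setting, the extensions of the Gromov product to boundary points and the verification that they take finite, uniformly bounded values on pairs $(\omega^n x_0,\eta)$ with $\eta\neq \xi_\omega$ require some care, and I would expect this step to absorb most of the technical work.
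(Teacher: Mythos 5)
Your proof is correct, and the overall strategy is the same as the paper's: start from the Karlsson--Gou\"ezel horofunction $h_\omega$, set $X_-^h(\omega)=\{h_\omega\}$, and use hyperbolic-geometry estimates together with the squeeze $|h(\cdot)|\leq d(\cdot,x_0)$. The paper's proof, however, funnels everything through a single inequality, namely Proposition~\ref{VisualMetric} applied with $g=\omega^{-n}$ to pairs $\xi,\eta\in\bord X$ (the displayed estimate \eqref{fundamentalIneq}): this both forces $\bar{D}_b(\xi,\eta)=0$ when two boundary horofunctions have $-\ell(\mu)$ asymptotics, and gives the lower bound $C\leq h(\omega^n x_0)+h_1(\omega^n x_0)$ needed for the $+\ell(\mu)$ limit. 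You instead re-derive the content of that proposition directly with Gromov-product estimates, and add two elements that the paper leaves implicit: the observation that $(\omega^n x_0)$ is a Gromov sequence converging to $\xi_\omega=\phi(h_\omega)$, and an explicit case split between $X_\infty^h$ and $X_F^h$. The case split is a genuine clarification, since the paper's use of \eqref{fundamentalIneq} strictly speaking only covers horofunctions of the form $h_\eta$ with $\eta\in\bord X$; for $h\in X_F^h\setminus\rho(X)$ one must pass to a limit and check the lower bound does not degenerate, which you avoid by arguing separately.

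One point to tighten: your claim that for $h\in X_F^h$ one has $h(y)=d(y,x_0)+O(1)$ uniformly, attributed to ``the bounded near-minimizing set of $h$ together with the triangle inequality,'' is true in a $\delta$-hyperbolic space but not for triangle-inequality reasons alone. Writing $h=\lim_k h_{x_k}$, one has $d(y,x_0)-h(y)=2\lim_k\langle y,x_k\rangle_{x_0}$, and if this were unbounded in $y$ then Gromov's 4-point condition would force $\liminf_{k,m}\langle x_k,x_m\rangle_{x_0}=\infty$, making $(x_k)$ a Gromov sequence and hence $h\in X_\infty^h$, a contradiction. So the bound needs the 4-point condition; flagging this avoids the impression it holds in arbitrary metric spaces. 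Also note that in the $X_\infty^h$ case the uniform bound on $\langle\omega^n x_0,\eta\rangle_{x_0}$ comes from the quasi-ultrametric inequality $\rho_b(\xi_\omega,\eta)\leq 2\max\{\rho_b(\xi_\omega,\omega^n x_0),\rho_b(\omega^n x_0,\eta)\}$ (equivalently, Gromov's inequality), not from the visibility comparison by itself; that comparison only transfers between $\bar{D}_b$ and $\rho_b$. With these two clarifications, the argument closes cleanly and delivers the same theorem.
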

	
	\begin{definition}[Irreducible measure]
		We say that $\mu \in \Prob_c(G)$ is irreducible if there is no horofunction $h \in X^h$ such that $g\cdot h = h$ for  $\mu$-almost every $g$.
	\end{definition}
    
    By an argument analogous to Proposition 5.3 in \cite{duarte2016lyapunov}, the set of irreducible measures is open. This allows us to assume that measures in a neighbourhood are also irreducible.
    
	In this text we are interested in understanding the typical behaviour of $\omega^nx_0$. Notice however that $G$ acts on horofunctions by (\ref{horoAction}), this means that when looking at this action in $X$ from the point of view of horofunctions it is more sensible to consider $\omega^{-n}\cdot h$. Hence we take  on $X^h$ the action of $G$ as $(g,h) \to g^{-1}\cdot h$. To keep the local minimum map as a $G$-equivariant map, we also use this action on $\partial X$, that is, $(g,\xi) \to g^{-1} \xi$ .
	
	
	Let once again $M$ be a metric space on which $G$ acts. Given $\mu \in \Prob_c(G)$, a measure $\nu \in \Prob_c(M)$ is $\mu$-stationary if $\mu \star \nu = \nu$. The existence of stationary measures is extremely important to us. It is actually easy to see that when $M=X^h$ the existence of stationary measures follows from compactness. Our technique will actually allow for the existence of stationary measures in $\partial X$, which will require a finer treatment.

	Since $X$ satisfies (BA), the local minimum map $\phi:X_\infty^h \to \partial X$ is a $G$-equivariant homeomorphism. We denote by  $h_\xi$ the horofunction whose image under $\phi$ is $\xi$. We then have the following Furstenberg type formula.
	
	\begin{theorem}[Furstenberg type formula]
		\label{furstenbergForm}
		Let $X$ be a hyperbolic space and $G$ be a closed separable group of isometries of $X$. Let $\mu$ be an irreducible measure in  $\Prob_c(G)$, then for every $\mu-$stationary measure $\nu\in \Prob(\partial X)$
		\begin{equation*}
			\ell(\mu) = \int_G \int_{\partial X} h_\xi(gx_0) d\nu(\xi)d\mu(g).
		\end{equation*}
	\end{theorem}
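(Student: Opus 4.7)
The plan is to realise $\sigma(g,\xi) := h_\xi(g x_0)$ as an additive cocycle on $G\times\partial X$, exploit the stationarity of $\nu$ to reduce the level-$n$ integral to the level-$1$ integral, and then pass to the limit using Theorem \ref{hmet}. First, the $G$-equivariance of the local minimum map (so $h_{g^{-1}\xi} = g^{-1}\cdot h_\xi$) combined with (\ref{horoAction}) gives $h_\xi(gz) = h_{g^{-1}\xi}(z) + h_\xi(gx_0)$, and hence the cocycle relation
\[
\sigma(g_1 g_2, \xi) = \sigma(g_1, \xi) + \sigma(g_2, g_1^{-1}\xi).
\]
Iterating this along $\omega^n = g_0 g_1 \cdots g_{n-1}$ and repeatedly applying the stationarity identity $\int\int F(g^{-1}\xi)\, d\nu(\xi)\, d\mu(g) = \int F\, d\nu$, an induction on $n$ yields
\[
\int_\Omega \int_{\partial X} h_\xi(\omega^n x_0)\, d\nu(\xi)\, d\mu^\mathbb{N}(\omega) = n \int_G \int_{\partial X} h_\xi(g x_0)\, d\nu(\xi)\, d\mu(g).
\]

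Dividing by $n$, the proof reduces to showing that $\frac{1}{n} h_\xi(\omega^n x_0) \to \ell(\mu)$ in $L^1(\mu^\mathbb{N}\otimes\nu)$. Theorem \ref{hmet} supplies, for $\mu^\mathbb{N}$-a.e.\ $\omega$, a unique horofunction $h_\omega^-$ spanning $X_-^h(\omega)$, and for every other $h\in X^h$ the normalised cocycle evaluates to $\ell(\mu)$ in the limit. Writing $\xi_\omega^- := \phi(h_\omega^-) \in \partial X$, the crucial point is the negligibility
\[
(\mu^\mathbb{N}\otimes\nu)\bigl(\{(\omega,\xi) : \xi = \xi_\omega^-\}\bigr) = 0.
\]
Granting this, convergence in $L^1$ is immediate: horofunctions are $1$-Lipschitz and vanish at $x_0$, while compactness of $\supp \mu$ forces $d(\omega^n x_0, x_0) \leq n R$ with $R := \sup_{g\in\supp\mu} d(gx_0, x_0)$, so $|h_\xi(\omega^n x_0)/n| \leq R$ uniformly and the dominated convergence theorem applies.

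The main obstacle is establishing that negligibility, and this is exactly where irreducibility of $\mu$ enters. The $G$-equivariance in Theorem \ref{hmet}, transported via $\phi$, reads $\xi_{T\omega}^- = g_0^{-1}\xi_\omega^-$, so the law of $\xi_\omega^-$ under $\mu^\mathbb{N}$ is stationary for the reversed walk. If the coincidence set above had positive measure, then $\omega \mapsto \nu(\{\xi_\omega^-\})$ would be strictly positive on a shift-invariant event; disintegrating $\nu$ with respect to this event and averaging with respect to the joint shift-invariant probability space $(\Omega\times\partial X, \mu^\mathbb{N}\otimes\nu)$ one would extract a horofunction $h\in X^h$ satisfying $g\cdot h = h$ for $\mu$-almost every $g$, contradicting irreducibility. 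Once this negligibility is secured, dominated convergence applied to the identity above yields the Furstenberg-type formula.
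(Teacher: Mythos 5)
Your reduction step is both correct and genuinely different from the paper's. You verify the cocycle identity $h_\xi(g_1 g_2 x_0) = h_{g_1^{-1}\xi}(g_2 x_0) + h_\xi(g_1 x_0)$ from the $G$-equivariance of $\phi$ and (\ref{horoAction}), and then use $\mu^i\star\nu = \nu$ to telescope $\int\int h_\xi(\omega^n x_0)\,d\nu\,d\mu^\mathbb{N}$ down to $n$ copies of the level-one integral. This is an elementary bypass of the Furstenberg--Kifer machinery that the paper invokes (Theorems 1.1 and 1.4 of \cite{furstenberg1983random} via Theorem \ref{Furstenberg2.1}), and it is a cleaner way to set up the application of dominated convergence.

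The gap is in the negligibility claim, which is exactly where the actual difficulty of the theorem lives. You need $(\mu^\mathbb{N}\otimes\nu)\bigl(\{(\omega,\xi):\xi=\xi_\omega^-\}\bigr)=0$, and your justification --- ``disintegrating $\nu$ with respect to this event and averaging \dots one would extract a horofunction $h$ with $g\cdot h = h$ for $\mu$-a.e. $g$'' --- does not go through as stated. Two concrete problems. First, the event $\{\xi = \xi_\omega^-\}$ is \emph{not} invariant under the natural skew shift $\tilde T(\omega,\xi) = (T\omega, g_0^{-1}\xi)$ on $(\Omega\times\partial X, \mu^\mathbb{N}\otimes\nu)$: the HMET invariance gives $\xi_{T\omega}^- = g_0\xi_\omega^-$ (with the original boundary action), whereas $\tilde T$ transports $\xi$ in the opposite direction, so $\tilde T$ does \emph{not} preserve the coincidence set. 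Second, even if one argues through the stationarity of $\nu$ directly, a positive-measure coincidence only implies that $\nu$ and the law of $\xi_\omega^-$ share atoms; looking at the atoms of $\nu$ of maximal mass gives a finite $\mu$-a.e.-invariant set of boundary points, not a single point. Irreducibility as defined in the paper (no single $\mu$-a.e.-fixed horofunction) does not directly exclude a finite invariant set of cardinality $\geq 2$, so extracting a fixed horofunction requires an additional contraction argument (Proposition \ref{VisualMetric} plus HMET to collapse the finite set). The paper sidesteps this by working with a different invariant object, the set $S = \{h : h\in X_-^h(\omega)\ \mu^\mathbb{N}\text{-a.s.}\}$, which by construction is automatically at most a singleton and transforms cleanly under the $G$-action; Birkhoff applied to an ergodic stationary measure with a deficient value shows $S$ is non-empty, contradicting irreducibility. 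Your coincidence set is a different object and does not obviously reduce to $S$; you would need either to run the contraction argument above or to route through Lemma \ref{unifConv} (which hides the same analysis inside it).
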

	
	In §\ref{existUniqStat} we will prove that every measure in a neighbourhood of an irreducible measure with $\ell(\mu)>0$ admits a unique stationary measure. With this information  we will obtain the following result.
	
	\begin{theorem}
		\label{continuity}
		Let $X$ be a hyperbolic space and $G$ be a closed separable group of isometries of $X$. Given $\lambda>0$, let $\mu\in \Prob_c(G_\lambda)$ be irreducible and $\ell(\mu)>0$. Then there are constants $0<\alpha \leq 1$, $C<\infty$ and $r>0$ such that for every $\mu_1, \mu_2\in \Prob_c(G_\lambda)$ if $W_\alpha(\mu_i, \mu)<r$, $i=1,2$, then
		\begin{equation*}
			|\ell(\mu_1)-\ell(\mu_2)| \leq C \, W_\alpha(\mu_1, \mu_2).
		\end{equation*}
	\end{theorem}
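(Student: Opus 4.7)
Following the strategy of Baraviera--Duarte \cite{baraviera2019approximating}, the idea is to reduce the variation of the drift to (i) a variation of the driving measure tested against a fixed Hölder observable on $G$ and (ii) a variation of the stationary measure tested against a fixed Hölder observable on $\partial X$. By the openness of irreducibility and the existence/uniqueness of a $\mu$-stationary measure $\nu_\mu \in \Prob(\partial X)$ in a neighbourhood of $\mu$ (announced for §\ref{existUniqStat}), together with the Furstenberg formula (Theorem \ref{furstenbergForm}), we may write, for $r$ small enough,
\begin{equation*}
\ell(\mu_i) = \int_G F_{\nu_{\mu_i}}(g)\, d\mu_i(g), \qquad F_\nu(g) := \int_{\partial X} h_\xi(gx_0)\, d\nu(\xi), \quad i=1,2.
\end{equation*}

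I would then decompose
\begin{equation*}
\ell(\mu_1) - \ell(\mu_2) = \int_G F_{\nu_{\mu_1}}\, d(\mu_1 - \mu_2) \;+\; \int_G \bigl( F_{\nu_{\mu_1}} - F_{\nu_{\mu_2}} \bigr) d\mu_2,
\end{equation*}
and bound the two pieces separately. For the first piece, the condition $g \in G_\lambda$ forces $d(gx_0, x_0)<\log_b\lambda$, so $g\mapsto h_\xi(gx_0)$ is uniformly bounded (horofunctions are $1$-Lipschitz with $h_\xi(x_0)=0$) and $\alpha$-Hölder in $g$ with respect to $d_G$, with constants depending only on $\lambda$ and $\alpha$ (using that $d_G$ dominates $D_b$ on orbits of $x_0$). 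Averaging in $\xi$ against any probability measure preserves these bounds, so $F_{\nu_{\mu_1}}$ lies in a fixed ball of the $\alpha$-Hölder space of $G$; the dual definition of $W_\alpha$ immediately gives
\begin{equation*}
\Bigl| \int_G F_{\nu_{\mu_1}}\, d(\mu_1 - \mu_2) \Bigr| \leq C\, W_\alpha(\mu_1, \mu_2).
\end{equation*}

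For the second piece, for each fixed $g \in G_\lambda$ the map $\xi \mapsto h_\xi(gx_0)$ is bounded and $\alpha$-Hölder on $\partial X$ (with respect to $D_b$), uniformly in $g$. Thus it suffices to prove a Hölder-Wasserstein bound on the stationary measure,
\begin{equation*}
\sup_{\|\psi\|_{C^\alpha(\partial X)}\leq 1} \Bigl| \int \psi\, d\nu_{\mu_1} - \int \psi\, d\nu_{\mu_2} \Bigr| \;\leq\; C\, W_\alpha(\mu_1, \mu_2).
\end{equation*}
This is the heart of the argument, and I would establish it via the Nagaev spectral method applied to the Markov--Furstenberg operator $(Q_\mu \psi)(\xi) := \int_G \psi(g^{-1}\xi)\, d\mu(g)$ acting on the Banach space $C^\alpha(\partial X)$ of $\alpha$-Hölder functions. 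One shows that $Q_\mu$ is quasi-compact with a simple isolated leading eigenvalue $1$, whose dual eigenvector is $\nu_\mu$, and that $\mu \mapsto Q_\mu$ is Lipschitz from $(\Prob_c(G_\lambda),W_\alpha)$ into the operator-norm topology; Kato/Keller--Liverani perturbation theory then delivers the required Hölder dependence of the leading spectral projection, hence of $\nu_\mu$.

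The main obstacle is precisely this last step: producing a spectral gap for $Q_\mu$ on $C^\alpha(\partial X)$ uniformly in a neighbourhood of $\mu$. The required Doeblin--Fortet--Lasota--Yorke inequality must come from a contraction estimate on $\alpha$-Hölder seminorms under iteration of $Q_\mu$, which is the quantitative expression of the fact, furnished by HMET (Theorem \ref{hmet}) together with the hypothesis $\ell(\mu)>0$, that under $\omega^{-n}$ the dynamics on $\partial X$ typically contracts towards a single point. Once this uniform spectral gap is obtained, combining the two bounds above yields $|\ell(\mu_1) - \ell(\mu_2)| \leq C\, W_\alpha(\mu_1, \mu_2)$, completing the proof.
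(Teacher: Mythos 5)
Your overall architecture matches the paper's: decompose via the Furstenberg formula into a term testing $\mu_1-\mu_2$ against a fixed Hölder observable on $G$ and a term testing $\nu_{\mu_1}-\nu_{\mu_2}$ against a fixed Hölder observable on $\partial X$, bound the first using the dual definition of $W_\alpha$ and the $\alpha$-Hölder regularity of $g\mapsto\int h_\xi(gx_0)\,d\nu(\xi)$ (this is Lemma~\ref{isHolder} plus an averaging step), and bound the second via stability of the stationary measure. Where you diverge is the last step, and this is where your proposal has a real gap. You claim $\mu\mapsto Q_\mu$ is Lipschitz into the \emph{operator-norm} topology on $C^\alpha(\partial X)$ and invoke Kato/Keller--Liverani. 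But the perturbation $Q_{\mu_1}-Q_{\mu_2}$ is small only in the weak norm $\|\cdot\|_{C^\alpha\to C^0}$ (this is the first estimate of the unnumbered lemma preceding the proof: $\|(Q_{\mu_1}-Q_{\mu_2})f\|_\infty\leq \upsilon_\alpha(f)\,W_\alpha(\mu_1,\mu_2)$), not in $\|\cdot\|_{C^\alpha\to C^\alpha}$; so Kato's analytic perturbation theory does not apply directly, and Keller--Liverani---which is designed exactly for this weak-norm setting---returns only \emph{Hölder}, not Lipschitz, dependence of the leading spectral projector. That would give $|\ell(\mu_1)-\ell(\mu_2)|\lesssim W_\alpha(\mu_1,\mu_2)^\theta$ for some $\theta<1$, strictly weaker than the theorem.

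The paper avoids all of this with an elementary telescoping argument: for $n\geq 1$, writing $Q_{\mu_1}^n-Q_{\mu_2}^n=\sum_{i}Q_{\mu_2}^i(Q_{\mu_1}-Q_{\mu_2})Q_{\mu_1}^{n-i-1}$, using that $Q_{\mu_2}^i$ is a contraction in $\|\cdot\|_\infty$, that the middle factor is $W_\alpha$-small in the weak norm, and that the contraction $\upsilon_\alpha(Q_{\mu_1}^jf)\leq k^j\upsilon_\alpha(f)$ (Lemma~\ref{lemmaContraction} together with Proposition~\ref{Contract1}) makes the series geometric. Letting $n\to\infty$ and invoking $Q_{\mu_i}^nf\to(\int f\,d\nu_{\mu_i})\mathbf{1}$ from Proposition~\ref{uniquenessProp} yields $\bigl|\int f\,d\nu_{\mu_1}-\int f\,d\nu_{\mu_2}\bigr|\leq\frac{\upsilon_\alpha(f)}{1-k}W_\alpha(\mu_1,\mu_2)$, which is exactly the Lipschitz bound you need. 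So the ingredients you identified (spectral gap, Doeblin--Fortet contraction) are the right ones, but the correct way to combine them here is the direct telescoping estimate rather than abstract spectral perturbation theory.
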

	
	\begin{theorem}
		\label{LDT}
		Let $X$ be a hyperbolic space and $G$ be a closed separable group of isometries of $X$. Given $\lambda>0$, let $\mu\in \Prob_c(G_\lambda)$ be irreducible and $\ell(\mu)>0$, there exists $b>1$, $V$ a neighbourhood of $\mu$ in $\Prob_c(G_\lambda)$ and $C=C(\mu)<\infty$, $k=k(\mu)>0$ and $\varepsilon_0 = \varepsilon_0(\mu)>0$ such that for all $0<\varepsilon <\varepsilon_0$, $\mu_1\in V$, $n\in \mathbb{N}$ and every $h\in X_\infty^h$
		\begin{equation*}
			\mu_1^\mathbb{N}\left\{ \omega \in \Omega \, : \,  \left| \frac{1}{n} h(\omega^n x_0)- \ell(\mu_1) \right| > \varepsilon \right\} \leq C b^{-k\varepsilon^2n}.
		\end{equation*}
	\end{theorem}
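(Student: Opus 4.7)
The plan is to follow the Nagaev spectral method as outlined in the introduction, adapted to this hyperbolic setting. First I would introduce the transfer operator $L_\mu$ acting on the space of $\alpha$-Hölder functions on $\partial X$ (equivalently on $X_\infty^h$ via the local minimum map) by
\begin{equation*}
(L_\mu \varphi)(\xi) = \int_G \varphi(g^{-1}\xi)\, d\mu(g),
\end{equation*}
together with the perturbed family $(L_{\mu, z}\varphi)(h) = \int_G e^{z h(g x_0)}\, \varphi(g^{-1}\cdot h)\, d\mu(g)$. Because $\sigma(g, h) := h(g x_0)$ is a cocycle over the action $(g,h)\mapsto g^{-1}\cdot h$, iteration gives $\int_\Omega e^{z h(\omega^n x_0)} d\mu^\mathbb{N}(\omega) = (L_{\mu, z}^n \mathbf{1})(h)$, which turns the large deviation problem into a spectral one.

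Next I would establish that $L_\mu$ is quasi-compact on this Hölder space, with $1$ as a simple, isolated eigenvalue and the rest of the spectrum strictly inside the unit disc. Quasi-compactness should come from a Doeblin--Fortet / Lasota--Yorke inequality of the form $\upsilon_\alpha(L_\mu^n \varphi) \leq \rho^n\, \upsilon_\alpha(\varphi) + C_n \|\varphi\|_\infty$ with $\rho<1$. The contraction factor ought to be produced by the positive drift $\ell(\mu) > 0$: under $\mu^n$, a typical isometry sends most of $\partial X$ into a set of small $\bar{D}_b$-diameter around its Gromov limit, so the average Hölder constant of $\varphi \circ g^{-1}$ shrinks geometrically; here the cut-off to $G_\lambda$ controls the blow-up of the Hölder norm coming from expanding isometries. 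Simplicity of $1$, and the absence of other unit-modulus eigenvalues, would follow from irreducibility combined with the uniqueness of the $\mu$-stationary measure established in \S\ref{existUniqStat}.

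With the spectral gap in hand, analytic perturbation theory applies: for $z$ in a complex neighbourhood of $0$, $L_{\mu, z}$ has a dominant simple eigenvalue $\lambda(z)$ depending analytically on $z$, with $\lambda(0) = 1$. Differentiating the eigenvalue equation at $z=0$ and invoking Theorem \ref{furstenbergForm} gives $\lambda'(0) = \ell(\mu)$, whence $\log \lambda(z) = \ell(\mu)\, z + O(z^2)$. Writing $L_{\mu, z}^n = \lambda(z)^n \Pi(z) + R(z)^n$ with $\|R(z)^n\|$ decaying geometrically uniformly in $h$, the Chernoff inequality
\begin{equation*}
\mu^\mathbb{N}\!\left\{\omega : h(\omega^n x_0) > n(\ell(\mu) + \varepsilon)\right\} \leq e^{-zn(\ell(\mu) + \varepsilon)} (L_{\mu, z}^n \mathbf{1})(h),
\end{equation*}
followed by optimisation in small $z>0$, yields the desired $b^{-k\varepsilon^2 n}$ decay, uniformly in $h \in X_\infty^h$ since the spectral data do not depend on $h$. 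The symmetric lower tail is obtained by taking $z<0$.

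For uniformity in $\mu_1 \in V$ I would show that $\mu \mapsto L_\mu$ is Lipschitz continuous from $(\Prob_c(G_\lambda), W_\alpha)$ into operators on the Hölder space, so that analytic perturbation theory delivers continuous dependence of $\lambda(z)$ and $\Pi(z)$ on $\mu_1$; shrinking the neighbourhood $V$ then makes the constants $C, k, \varepsilon_0$ uniform. The hard part will be the Doeblin--Fortet inequality: it demands a precise quantitative control of the distortion of $\bar{D}_b$ by $g^{-1}$ in terms of $d(gx_0,x_0)$, combined with a probabilistic argument, driven by positive drift and by Theorem \ref{hmet}, ensuring that sufficient boundary contraction accumulates after finitely many steps with positive probability. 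In the non-proper setting this is particularly delicate, as one cannot rely on compactness of $\partial X$ and must exploit the basic assumption (BA) to transfer contraction estimates between $X^h$ and $\partial X$.
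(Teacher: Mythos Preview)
Your proposal is correct and follows essentially the same strategy as the paper---Nagaev's spectral method, with quasi-compactness of the transfer operator driven by the average contraction $k_\alpha^n(\mu)<1$, and the Furstenberg formula identifying the centering---but the packaging differs in two respects worth noting.

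First, the paper does not implement the perturbed operators $L_{\mu,z}$ by hand. Instead it invokes an abstract large-deviation theorem for Markov systems acting simply and quasi-compactly on a Banach algebra (Theorem~\ref{LDTT}, i.e.\ Theorem~5.4 of \cite{duarte2016lyapunov}), which contains the Nagaev perturbation and Chernoff optimisation internally. To fit this framework the paper enlarges the state space to $G_\lambda\times\partial X$ with the kernel $K_\mu(g,\xi;\cdot)=\int_G\delta_{(g',g^{-1}\xi)}\,d\mu(g')$ and takes the observable $\zeta(g,\xi)=h_\xi(gx_0)$, so that the sum process is exactly $S_n(\zeta)=h_{\xi_0}(\omega^n x_0)$. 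Your direct approach keeps the state space as $\partial X$ and buries the cocycle in the complex weight $e^{zh(gx_0)}$; this is the same computation in a different bookkeeping and arguably cleaner, though it forgoes the ready-made uniformity statement in the abstract theorem.

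Second, the ``hard part'' you anticipate---the Doeblin--Fortet inequality---is not a new obstacle here: it is exactly Lemma~\ref{lemmaContraction} combined with Proposition~\ref{Contract1}, which in turn rests on the distortion estimate of Proposition~\ref{VisualMetric} and the uniform convergence of Lemma~\ref{unifConv}. In particular one does not need a Lasota--Yorke inequality with an $\|\varphi\|_\infty$ remainder: the pure seminorm contraction $\upsilon_\alpha(Q_\mu^n f)\leq k_\alpha^n(\mu)\,\upsilon_\alpha(f)$ already suffices, and the non-properness of $X$ plays no further role once (BA) identifies $X_\infty^h$ with $\partial X$. The uniformity in $\mu_1\in V$ likewise comes directly from Proposition~\ref{Contract1}, which gives $k_\alpha^n(\mu_1)<1$ uniformly over a Wasserstein neighbourhood, rather than from a separate operator-continuity argument.
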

	
	Applying a comparison argument, we will end up obtaining the intended result:
	
	\begin{corollary}
		\label{LDTdrift}
		Let $X$ be a hyperbolic space and $G$ be a closed separable group of isometries of $X$. Given $\lambda>0$, let $\mu\in \Prob_c(G_\lambda)$ be irreducible and $\ell(\mu)>0$, there exists $b>1$, $V$ a neighbourhood of $\mu$ in $\Prob_c(G_\lambda)$ and $C=C(\mu)<\infty$, $k=k(\mu)>0$ and $\varepsilon_0 = \varepsilon_0(\mu)>0$ such that for all $0<\varepsilon <\varepsilon_0$, $\mu_1\in V$, $n\in \mathbb{N}$,
		\begin{equation*}
			\mu_1^\mathbb{N}\left\{ \omega \in \Omega \, : \,  \left| \frac{1}{n} d(\omega^nx_0\, , \, x_0)- \ell(\mu_1) \right| > \varepsilon \right\} \leq C b^{-k\varepsilon^2n}.
		\end{equation*}
	\end{corollary}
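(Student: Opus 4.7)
The plan is to derive the estimate from Theorem~\ref{LDT} by comparing $d(\omega^n x_0, x_0)$ against values of horofunctions, handling the two tails separately. The key analytic tool I would use is the elementary identity, valid for every $\xi \in \partial X$ with associated horofunction $h_\xi = \phi^{-1}(\xi) \in X_\infty^h$,
\[
d(y, x_0) = h_\xi(y) + 2\langle y, \xi\rangle_{x_0},
\]
obtained by passing to the limit $x_k \to \xi$ in $h_{x_k}(y) = d(y,x_k) - d(x_k,x_0)$ and in $\langle y, x_k\rangle_{x_0} = \tfrac{1}{2}(d(y,x_0) + d(x_k,x_0) - d(y,x_k))$.

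The lower tail would be immediate. Every $h \in X^h$ is $1$-Lipschitz and satisfies $h(x_0) = 0$ (as pointwise limit of functions with these properties), hence $h(\omega^n x_0) \leq d(\omega^n x_0, x_0)$. I would fix any $h \in X_\infty^h$---this set is non-empty because $\ell(\mu)>0$ forces the Karlsson--Gouëzel horofunction of Theorem~\ref{Karlsson-Gouëzel} to be unbounded below---and note that $\{d(\omega^n x_0, x_0)/n < \ell(\mu_1) - \varepsilon\}$ is contained in $\{h(\omega^n x_0)/n < \ell(\mu_1) - \varepsilon\}$, which Theorem~\ref{LDT} bounds by $Cb^{-k\varepsilon^2n}$.

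For the upper tail I would fix a single deterministic $\xi \in \partial X$ that is sufficiently generic for the random walk, in the sense that the trajectory $\omega^n x_0$ approaches a boundary point distinct from $\xi$ for $\mu_1^\mathbb{N}$-typical $\omega$, uniformly for $\mu_1$ in a neighbourhood $V$ of $\mu$. Its existence would be ensured by the (unique) stationary measure on $\partial X$ being non-atomic, or, failing that, by choosing $\xi$ outside its finitely many atoms---a property that persists on $V$ by the stationary-measure theory in §\ref{existUniqStat}. The central step would then be to establish the exponential Gromov-product escape estimate
\[
\mu_1^\mathbb{N}\bigl\{2\langle \omega^n x_0, \xi\rangle_{x_0} > n\varepsilon/2\bigr\} \leq C'b^{-k'\varepsilon^2n},
\]
uniformly for $\mu_1 \in V$. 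Combined with the identity and Theorem~\ref{LDT} applied to $h_\xi$ with threshold $\varepsilon/2$, this would deliver the upper tail.

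The main obstacle is the Gromov-product escape estimate: quantifying how a typical trajectory drifts away from a fixed deterministic boundary point at linear rate with exponentially small probability of failure. I expect this to draw on the contraction properties of the random walk on $\partial X$ already packaged into the proof of Theorem~\ref{LDT} (via the spectral/Nagaev machinery), together with the non-atomicity and uniqueness of stationary measures from §\ref{existUniqStat}. Once that estimate is in hand, the two tails assemble directly from Theorem~\ref{LDT}, and uniformity of the constants $C, k, \varepsilon_0$ on $V$ is inherited from that theorem.
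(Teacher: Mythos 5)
Your lower-tail argument is correct and is essentially the lower half of the paper's argument: since every $h \in X^h$ is $1$-Lipschitz with $h(x_0)=0$, one has $h(\omega^n x_0) \leq d(\omega^n x_0, x_0)$ deterministically, and the inclusion $\{d(\omega^n x_0, x_0)/n < \ell(\mu_1) - \varepsilon\} \subset \{h(\omega^n x_0)/n < \ell(\mu_1) - \varepsilon\}$ plus Theorem~\ref{LDT} gives the bound.

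The upper tail is where there is a genuine gap. Your ``Gromov-product escape estimate''
$\mu_1^\mathbb{N}\{2\langle \omega^n x_0, \xi\rangle_{x_0} > n\varepsilon/2\} \leq C'b^{-k'\varepsilon^2n}$
is not a consequence of Theorem~\ref{LDT} and in fact is circular as you would set it up. Unwinding the (approximate) identity, $2\langle \omega^n x_0, \xi\rangle_{x_0} = d(\omega^n x_0, x_0) - h_\xi(\omega^n x_0)$, so the event $\{2\langle \omega^n x_0, \xi\rangle_{x_0} > n\varepsilon/2\}$ intersected with the high-probability event $\{h_\xi(\omega^n x_0) \geq n(\ell(\mu_1) - \varepsilon/4)\}$ from Theorem~\ref{LDT} is precisely the event $\{d(\omega^n x_0, x_0) > n(\ell(\mu_1) + \varepsilon/4)\}$, which is the upper tail you are trying to bound. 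You cannot bootstrap the escape estimate from Theorem~\ref{LDT} without already knowing the conclusion of the corollary, and the ``contraction properties / non-atomicity of the stationary measure'' you gesture at give qualitative transience away from $\xi$, not an effective exponential tail on $\langle \omega^n x_0, \xi\rangle_{x_0}$ for points $\omega^n x_0 \in X$ (not boundary points), so the obstacle you flag is real and not closed by the tools you cite.

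The paper avoids the escape estimate entirely by fixing \emph{two} distinct boundary points $\xi \neq \eta$. Since $d(y,x_0) = h_\xi(y) + 2\langle y,\xi\rangle_{x_0} = h_\eta(y) + 2\langle y,\eta\rangle_{x_0}$ up to $O(\delta)$, we get $d(y,x_0) = \max_{i=\xi,\eta} h_i(y) + 2\min_{i=\xi,\eta}\langle y,i\rangle_{x_0}$, and Gromov's inequality gives the \emph{deterministic} bound $\min_{i=\xi,\eta}\langle y, i\rangle_{x_0} \leq \langle \xi,\eta\rangle_{x_0} + \delta$, a constant independent of $y$. This is exactly Lemma~\ref{comparisonLemma}:
$\max_{i=\xi,\eta} h_i(gx_0) \leq d(gx_0,x_0) \leq \max_{i=\xi,\eta} h_i(gx_0) + K(\delta,\xi,\eta)$.
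The upper tail then follows from a union bound over $\{h_\xi(\omega^n x_0)/n > \ell(\mu_1) + \varepsilon/2\} \cup \{h_\eta(\omega^n x_0)/n > \ell(\mu_1) + \varepsilon/2\}$ once $n$ is large enough that $K/n < \varepsilon/2$, with each piece controlled by Theorem~\ref{LDT}. No probabilistic escape estimate is needed; the second boundary point converts the Gromov product into a fixed additive constant.
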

	
	We take this time to recall that if $X$ is proper than all results apply to $G=\isom(X)$.
	
	\subsection{Further Discussion}
	
	The drift in the metric hyperbolic setting has a quite similar behaviour to the Lyapunov exponents in the linear cocycle  setting. In fact, if we consider $SL(2,\mathbb{R})$ acting isometrically on the hyperbolic upper-half plane, the two concepts $\mathbb{H}^2$ overlap. We explore this common playground to use techniques that were originally obtained for the study of the Lyapunov exponents in $SL(2, \mathbb{R})$, while working around the metric technicalities.
	
	Despite the similarities, there are several interesting natural actions of groups acting by isometries on hyperbolic spaces that escape the linear setting such as rank one semisimple Lie Groups acting on their symmetric spaces, Gromov hyperbolic groups acting on their Cayley graphs, mapping class groups on their curve complexes, the Cremona group acting on the Picard-Manin hyperbolic space among others (see \cite{maher2018random}). In some of these examples the fact we drop the usual properness condition is quite important.
	
	When it comes to large deviations, our work meets the paper of Boulanger et. al. in \cite{boulanger2020large} where a large deviations principle is proven in the setting of non-elementary measures in a countable group $G$. More recently Aoun and Sert \cite{aoun2021random} obtained large deviation estimates as well as continuity in the proper case for cocompact actions of $G$ in $X$. Our large deviations are weaker as they local instead of global, although they apply more generally. The local nature of our results are a consequence of methods applied. More precisely, we will use spectral techniques motivated by Duarte-Klein \cite{duarte2016lyapunov}. Such methods have also been used in the case of hyperbolic groups by Björklund in \cite{bjorklund2010central} where a central limit theorem is presented.

	As for continuity, if $\mu$ is non-elementary and supported on a finite set, the drift is known to be analytic with respect to measures supported in the same set (see \cite{gilch2013regularity, gouezel2015analyticity, gouezel2018entropy, ledrappier2013regularity}). As mentioned before, in the proper case our continuity results are also similar to the ones in \cite{aoun2021random}. Note however that our work allows Hölder continuity in compactly supported measures, which makes it so we can consider more general measures and also take different isometries as well as present more information regarding the continuity.
		
	Although we define irreducible measures,  the case where $\mu$ is  non-elementary is also interesting to us. In \cite{maher2018random}, it was proven that in this case the drift is positive; however, non-elementary measures are always irreducible by definition, in other words, our results apply in such situations. 
		
	Non-elementary measures are irreducible, but it may be interesting to explore whether being irreducible is a generic condition. One can easily prove that being irreducible is an open condition in $\Prob_c(G)$. Density seems a bit more delicate since in some cases, such as $X=\mathbb{R}$, there are no irreducible measures. However, due to the convexity of the space of probabilities, irreducible measures, if they exist, form a dense set.
	
	The Fürstenberg type formula is not entirely new either. For hyperbolic spaces one can find it used in  \cite{aoun2021random} whilst for hyperbolic groups it makes appearances in texts such as \cite{bjorklund2010central, gouezel2015analyticity}. More broad versions also exist for arbitrary groups acting on some metric space such as \cite{carrasco2017furstenberg, karlsson2011noncommutative}.
	
	Concerning the paper's organization, in section two we explore Hyperbolic metric spaces and their properties. In section three we take a closer look at the group of isometries $G$, proving it is a topological group and how the Wasserstein distance behaves. We dedicated section four to proving the HMET. Sections five and six are devoted to proving continuity and large deviations respectively. We prove the Furstenberg formula in the end, way after its use since it employs machinery developed all throughout the text.
	
	\section{Horofunction compactification of Hyperbolic Metric Spaces}
	\label{Boundaries}
	
	Throughout this section we discuss the Horofunction compactification of Gromov hyperbolic spaces. Although we repeat some of the definitions that appeared in the introduction we add new flavour and details to them. More profound presentations can be found in \cite{bridson2013metric, das2017geometry}.

	\subsection{Horofunction Compactification}
	
	Let $X$ be a metric space with basepoint $x_0$. Consider the injective map
	\begin{align*}
		\rho: X & \to C(X) \\
		x & \mapsto h_x( \cdot ) = d(\cdot , x) - d(x,x_0).
	\end{align*}
	Throughout the text we will also make use of the forms
	\begin{align*}
		h_x(z) & = d(z, x_0) - 2 \langle z \, , \, x \rangle_{x_0}\\ 
		& = \langle x \, , \, x_0 \rangle_z - \langle x \, , \, z \rangle_{x_0}.
	\end{align*}
	Notice that $h_x$ are all $1$-Lipschitz and satisfy $h_x(x_0)=0$.
	
	Endow the space $C(X) \subset \mathbb{R}^X$ with the product topology, that is the topology of pointwise convergence, which is equivalent to the compact-open topology. Then, using the triangle inequality one has
	\begin{equation*}
		-d(z,x_0) \leq h_x(z) \leq d(z,x_0),
	\end{equation*} 
	hence $\rho(X)$ may be identified with a subset of $\Pi_{z\in X}[-d(z,x_0) , d(z,x_0)]$ which, by Tychonoff's theorem, is compact for the product topology. Therefore the closure $X^h = \overbar{\rho(X)}$ will be a compact set called the horofunction Compactifications of $X$. The elements in $X^h$ are called horofunctions of $X$.
	
	\begin{proposition}
		The horofunction compactification is compact, Hausdorff and second countable (hence metrizable)
	\end{proposition}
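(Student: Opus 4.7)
I would tackle the three properties separately, in increasing order of difficulty.

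Compactness and Hausdorffness are almost tautological given how $X^h$ is set up. For every $x \in X$ one has $-d(z,x_0) \leq h_x(z) \leq d(z,x_0)$, so $\rho(X)$ sits inside the product box $P := \prod_{z \in X}[-d(z,x_0),d(z,x_0)]$. By Tychonoff, $P$ is compact in the product topology, and as a product of Hausdorff spaces it is Hausdorff. Since $X^h = \overline{\rho(X)}$ is a closed subset of $P$, it inherits both properties.

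The real content is second countability, because an uncountable product of metrizable spaces is generally not second countable. Here I would exploit the standing hypothesis that $X$ is separable. Pick a countable dense set $D = \{z_n\}_{n\in\mathbb{N}} \subset X$ and consider the restriction/evaluation map
\begin{equation*}
    \Phi: X^h \to \prod_{n\in\mathbb{N}}[-d(z_n,x_0), d(z_n,x_0)], \qquad h \mapsto (h(z_n))_{n\in\mathbb{N}}.
\end{equation*}
The target is a countable product of compact metric spaces, hence compact metrizable, and in particular second countable. The map $\Phi$ is continuous by definition of the product topology. The plan is to show $\Phi$ is injective, in which case $\Phi$ becomes a continuous bijection from a compact space onto $\Phi(X^h)$, a subspace of a Hausdorff space; such a map is automatically a homeomorphism onto its image, so $X^h$ embeds into a second countable space and is therefore second countable itself.

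Injectivity of $\Phi$ is the one nontrivial step, and it hinges on the $1$-Lipschitz property passing to limits. Any $h \in X^h$ is a pointwise limit of $1$-Lipschitz functions $h_{x_\alpha}$, and the $1$-Lipschitz condition $|f(z)-f(z')|\leq d(z,z')$ survives pointwise limits, so every horofunction is itself $1$-Lipschitz. If $h,h' \in X^h$ agree on $D$, then for any $z \in X$ and any $z_n \in D$,
\begin{equation*}
    |h(z)-h'(z)| \leq |h(z)-h(z_n)| + |h(z_n)-h'(z_n)| + |h'(z_n)-h'(z)| \leq 2\,d(z,z_n),
\end{equation*}
which can be made arbitrarily small by density of $D$. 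Hence $h = h'$ on all of $X$, proving $\Phi$ is injective. Finally, metrizability follows for free: a compact Hausdorff second countable space is metrizable by Urysohn's theorem. I expect the only place a reader might want extra care is the verification that the $1$-Lipschitz property is preserved under pointwise limits, but that is immediate from the inequality above applied to the approximating net.
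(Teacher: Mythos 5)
Your proof is correct, and it takes a genuinely different route from the paper's for the one nontrivial part (second countability). The paper argues that $C(X)$ equipped with the compact-open topology is Hausdorff and second countable because $X$ and $\mathbb{R}$ are, and then observes that on the set of normalized $1$-Lipschitz functions the compact-open and pointwise topologies coincide; second countability of $X^h$ is inherited from $C(X)$. You instead build an explicit embedding $\Phi: X^h \hookrightarrow \prod_{n}[-d(z_n,x_0),d(z_n,x_0)]$ by restriction to a countable dense set $D=\{z_n\}$, and verify injectivity from the $1$-Lipschitz property surviving pointwise limits; since a continuous injection from a compact space into a Hausdorff space is an embedding, $X^h$ inherits second countability (and metrizability) from the countable product. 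Your approach is more elementary and arguably more robust: second countability of $C(X)$ in the compact-open topology generally requires $X$ to be hemicompact, a properness-flavoured hypothesis the paper explicitly avoids, whereas your direct embedding sidesteps that issue entirely by only ever looking at the compact $1$-Lipschitz slice. Both proofs ultimately exploit the same two ingredients — separability of $X$ and the $1$-Lipschitz bound on horofunctions — but yours makes the dependence on them transparent.
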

	
	\begin{proof}
		By hypothesis, $X$ is a separable metric space, hence Hausdorff and second countable. Since $\mathbb{R}$ is also Hausdorff and second countable, so is $C(X)$ (for the compact-open topology). However for the subspace of $1-$Lipschitz functions normalized by taking the value $0$ at $x_0$ the compact-open topology and the topology of pointwise convergence agree.
	\end{proof}
	
	If $X$ is a proper space then the pointwise convergence coincides with uniform convergence on compact sets from the usual construction of the horofunction compactification, in this case $X^h$ contains $X$ as an open and dense set. In the nonproper case, the image $\rho(X)$ may not be open in $\overbar{\rho(X)}$, so we may not have a compactification in the usual sense.

	\begin{proposition}
		Let $\isom(X)$ be a group of isometries of $X$. Then the action of $\isom(X)$ in $X$ extends to an action by homeomorphisms on $X^h$, defined by
		\begin{equation*}
			g \cdot h(z) :=	h(g^{-1}z) - h(g^{-1}x_0),
		\end{equation*}
		for $g\in \isom(X)$ and $h\in X^h$.
	\end{proposition}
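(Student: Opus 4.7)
The plan is to verify four things in sequence: (i) that for each $g \in \isom(X)$ and each $h \in X^h$, the function $g\cdot h$ defined by the formula is again an element of $X^h$; (ii) that the formula restricts to the tautological action on $\rho(X) \cong X$; (iii) that the group axioms hold; and (iv) that each $g$ acts as a homeomorphism.

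First I would check consistency on $\rho(X)$: for $x \in X$, a direct expansion gives
\begin{equation*}
g\cdot h_x(z) = h_x(g^{-1}z) - h_x(g^{-1}x_0) = d(g^{-1}z, x) - d(g^{-1}x_0, x) = d(z, gx) - d(x_0, gx) = h_{gx}(z),
\end{equation*}
using that $g$ is an isometry. So on $\rho(X)$ the new action agrees with pushing forward by $g$. Next, I would verify the group axioms directly from the formula: $e\cdot h(z) = h(z) - h(x_0) = h(z)$ since horofunctions vanish at $x_0$, and a short substitution yields $(gg')\cdot h = g\cdot(g'\cdot h)$, where the extra cancellation between the correction terms at $x_0$ is what makes this work.

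For (i), I would first show that the formula preserves the two defining properties of elements of $X^h$, namely being $1$-Lipschitz and vanishing at $x_0$. Vanishing at $x_0$ is immediate; the Lipschitz bound follows from $|g\cdot h(z) - g\cdot h(w)| = |h(g^{-1}z) - h(g^{-1}w)| \leq d(g^{-1}z, g^{-1}w) = d(z,w)$. To show that $g\cdot h$ lies in the closure $\overbar{\rho(X)}$, I would use part (ii) together with continuity: if $h = \lim_n h_{x_n}$ pointwise, then $g\cdot h = \lim_n g\cdot h_{x_n} = \lim_n h_{gx_n}$ pointwise, which exhibits $g\cdot h$ as a limit of horofunctions of points, hence as an element of $X^h$.

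Finally, for (iv), I would observe that the map $h \mapsto g\cdot h$ is continuous for the topology of pointwise convergence: if $h_n \to h$ pointwise on $X$, then for each fixed $z$ the values $h_n(g^{-1}z)$ and $h_n(g^{-1}x_0)$ converge, so $(g\cdot h_n)(z) \to (g\cdot h)(z)$. Since this also applies to $g^{-1}$, and the group axioms give $g^{-1}\cdot(g\cdot h) = h$, the map is a homeomorphism. No step here is a substantive obstacle; the only mildly delicate point is verifying that the correction term $-h(g^{-1}x_0)$ is precisely what is needed so that group composition works out and so that the action preserves the normalization $h(x_0) = 0$, which is why this term cannot be omitted.
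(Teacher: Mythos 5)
Your proposal is correct and follows essentially the same route as the paper: both compute that the formula $g\cdot h(z) = h(g^{-1}z) - h(g^{-1}x_0)$ agrees with the tautological pushforward $g\cdot h_x = h_{gx}$ on $\rho(X)$, and then extend to $X^h$ by pointwise continuity. You simply spell out a few steps the paper leaves implicit, namely that $g\cdot h$ lands in the closure $\overbar{\rho(X)}$ (via approximation by $h_{gx_n}$), the group axioms, and the Lipschitz/normalization checks.
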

	
	\begin{proof}
		We naturally transport the action on $X$ to the action on $\rho(X)$ via $g\cdot h_x = h_{gx}$. Then
		\begin{align*}
			g\cdot h_x (z) & = h_{gx}(z) \\
			& = d(z,gx) - d(gx,x_0)\\
			& = d(g^{-1}z\, , \, x ) - d(x \,, \, g^{-1}x_0) \\
			& = h_x(g^{-1}z) - h_x(g^{-1}x_0),
		\end{align*}
		which we can transport to the whole of $X^h$. It is immediate that if $h_n\to h$ pointwisely, then $g\cdot h_n \to g\cdot h$ pointwisely, whence the action of $\isom(X)$ on $X^h$ is continuous.
	\end{proof}
	
	As we've previously mentioned, we shall partition horofunctions $X^h$ in two: its finite part 
	\begin{equation*}
		X_F^h:=\{h\in X^h \, : \, \inf(h)> -\infty\}
	\end{equation*}
	and its infinite part
	\begin{equation*}
		X_\infty^h:=\{h\in X^h \, : \, \inf(h)=-\infty\}
	\end{equation*}
	Both $X_F^h$ and $X_\infty^h$ are invariant for the action of $\isom(X)$ on $X^h$. Clearly one has $\rho(X)\subset X_F^h$ and, in well behaved cases, one may actually get the equality. Another important remark to make concerns the fact that $X_\infty^h$ need not be compact. Let us look at a pathologic example that also explains the nomenclature boundary instead of compactification when referring to $\partial X$
	
	\begin{example}
		Consider $X \subset \mathbb{R}^2$ a set given by countably many half-lines emanating from the origin in $\mathbb{R}^2$. Given $x,y \in X$, consider the distance
		\begin{equation*}
			d(x,y) = 
			\begin{cases}
				||x-y|| &\textrm{, if $x$ and $y$ belong to the same half-line} \\
				||x||+||y|| &\textrm{, otherwise}.
			\end{cases}
		\end{equation*}
		This space is a tree, hence it is $0$-hyperbolic. Notice however that the Gromov boundary of $X$ is $\mathbb{N}$ with the discrete topology, which isn't compact. This is a consequence of the fact that locally compactness fails at the origin. Consider now the sequence of horofunctions $h_n$ given as a limit of sequences $h_{x_k}$, where $x_k$ is the point at distance $k$ from the origin in the $n$-th ordered half-line. Let $y$ be a point in $x_n$, then $h_n(y)=-||y||$ whose infimum is $-\infty$, that is, $h_n\in X_\infty^h$.  However that $h_n\to h_{x_0}$ which belongs to $X_F^h$, hence $X_\infty^h$ is not compact.
	\end{example}
	
	In this text we assumed that $X$ satisfies the so called basic assumption (BA), that is, there exists a homeomorphism between $X_\infty^h$ and $\partial X$. For the sake of completeness we now replicate its construction. In particular, this presentation lays bare the interplay between horofunctions and the Gromov product. The following Lemma is an adaptation of a result in \cite{maher2018random}.
	
	\begin{lemma}
		\label{horoProduct1}
		Let $X$ be a $\delta$-hyperbolic space with basepoint $x_0$. Then for every horofunction $h \in X^h$ and points $x,y\in X$, the following inequalities holds:
		\begin{equation*}
			\langle x \, , \, y \rangle_{x_0} \geq \min\{ -h(x) \, , \, -h(y)\} - \delta,
		\end{equation*} 
		moreover, for every $z\in X$,
		\begin{equation*}
			\langle x \, , \, y \rangle_{x_0} \geq \min\{ -h_x(z) \, , \, -h_y(z)\} - \delta.
		\end{equation*}
	\end{lemma}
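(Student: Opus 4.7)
The plan is to reduce both inequalities to the four-point condition of hyperbolicity applied to $\langle x,y\rangle_{x_0}$ with a suitable intermediate point, combined with the identity $h_w(z) = \langle w,x_0\rangle_z - \langle w,z\rangle_{x_0}$ already recorded in the text. Since Gromov products are non-negative, this identity immediately gives the elementary bound $\langle w,z\rangle_{x_0} \geq -h_w(z)$ for every $w,z \in X$, which is the only ingredient beyond Gromov's inequality that will be used.

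For the second inequality, I would apply the four-point condition to $\langle x,y\rangle_{x_0}$ with intermediate point $z$, obtaining
\[
\langle x,y\rangle_{x_0} \geq \min\{\langle x,z\rangle_{x_0},\langle z,y\rangle_{x_0}\} - \delta,
\]
and then apply the elementary bound above with $w=x$ and $w=y$ (using symmetry of the Gromov product) to replace $\langle x,z\rangle_{x_0}$ and $\langle z,y\rangle_{x_0}$ by $-h_x(z)$ and $-h_y(z)$ respectively. For the first inequality, since $h\in X^h$ we can write $h$ as the pointwise limit of horofunctions $h_{z_n}$ with $z_n\in X$. Running the same argument with the intermediate point $z_n$ yields
\[
\langle x,y\rangle_{x_0} \geq \min\{-h_{z_n}(x),-h_{z_n}(y)\} - \delta,
\]
and since the left-hand side is independent of $n$, letting $n\to\infty$ and using pointwise convergence $h_{z_n}(x)\to h(x)$, $h_{z_n}(y)\to h(y)$ together with continuity of $\min$ finishes the proof.

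There is no real obstacle here beyond careful bookkeeping: one must be attentive that in the identity $h_w(z) = \langle w,x_0\rangle_z - \langle w,z\rangle_{x_0}$ the variable $w$ always plays the role of the horofunction's base point, so the elementary bound must be invoked with $w$ equal to the point generating the horofunction in each application (i.e.\ $w=x$, $w=y$ in the second inequality, and $w=z_n$ in the limiting argument for the first).
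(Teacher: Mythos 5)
Your proposal is correct and follows essentially the same route as the paper: derive the pointwise bound $\langle w,z\rangle_{x_0} \geq -h_w(z)$ (the paper does this directly from the triangle inequality, you from the identity $h_w(z)=\langle w,x_0\rangle_z - \langle w,z\rangle_{x_0}$ and non-negativity of Gromov products, which are equivalent), feed it into the four-point condition with intermediate point $z$, and pass to the limit along $h_{z_n}\to h$ for the first inequality. Your bookkeeping about which variable plays the role of the basepoint matches the paper's choice of $-h_z(x)$ for the first inequality and $-h_x(z)$ for the second.
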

	
	\begin{proof}
		Let $z\in X$. Using the triangle inequality one has
		\begin{align}
			\langle x \, , \, z \rangle_{x_0} 
			& = \frac{1}{2} (d(x_0, x) + d(x_0, z) - d(x,z)) \nonumber \\
			& \geq d(x_0, z) - d(x,z) \nonumber \\
			& = - h_z(x). \label{gromovHoroIneq}
		\end{align}
		Now, from the definition of hyperbolicity
		\begin{align*}
			\langle x \, , \, y \rangle_{x_0} 
			& \geq \min\{ \langle x \, , \, z \rangle_{x_0} \, 	, \, \langle z \, , \, y \rangle_{x_0} \} - \delta \\
			& \geq \min \{ - h_z(x) \, , \, - h_z(y) \} - \delta.
		\end{align*}
		The claim follows from the fact that every horofunction is the pointwise limit of functions of the form $h_z$.
		
		The second inequality is analogous using $\langle x \, , \, z \rangle_{x_0} \geq -h_x(z)$.
	\end{proof}
	
	\begin{lemma}
		\label{horoProduct2}
		Let $h\in X_\infty^h$ be an horofunction and $(x_n)$ a sequence such that $h_{x_n} \to h$ and $(y_n)$ a sequence such that $h(y_n) \to - \infty$ . Then the sequences $(x_n)$ and $(y_n)$ are Gromov. 
	\end{lemma}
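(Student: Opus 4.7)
The plan is to reduce everything to two applications of Lemma \ref{horoProduct1}, handling the sequence $(y_n)$ via its first inequality and the sequence $(x_n)$ via its second inequality.

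For $(y_n)$, I would apply the first inequality of Lemma \ref{horoProduct1} with $x = y_n$ and $y = y_m$ to get
\begin{equation*}
\langle y_n \, , \, y_m \rangle_{x_0} \geq \min\{-h(y_n)\, , \, -h(y_m)\} - \delta.
\end{equation*}
Since by hypothesis $h(y_n) \to -\infty$, the right-hand side tends to $+\infty$ as $n,m \to \infty$, which is exactly what it means for $(y_n)$ to be Gromov.

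For $(x_n)$ the argument is slightly more delicate because we do not know a priori that $h(x_n) \to -\infty$; instead we leverage the fact that $h \in X_\infty^h$ means $\inf h = -\infty$. Given $M>0$, I would first choose a single point $z \in X$ with $h(z) < -M - 2\delta$, and only then invoke the pointwise convergence $h_{x_n}(z) \to h(z)$ to find $N$ such that $h_{x_n}(z) < -M-\delta$ for all $n \geq N$. Then the second inequality of Lemma \ref{horoProduct1} with this fixed $z$ yields, for all $n,m \geq N$,
\begin{equation*}
\langle x_n \, , \, x_m \rangle_{x_0} \geq \min\{-h_{x_n}(z)\, , \, -h_{x_m}(z)\} - \delta \geq M,
\end{equation*}
and since $M$ was arbitrary, $(x_n)$ is Gromov.

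The only real subtlety — the closest thing to an obstacle — is the order of quantifiers in the second part: one must pick $z$ depending on $M$ \emph{before} extracting $N$ from pointwise convergence, since pointwise convergence is not uniform in $z$. The two forms of Lemma \ref{horoProduct1} are precisely tailored to the two cases, so once this quantifier order is respected the proof is a direct $\varepsilon$-$N$ argument and nothing else is needed.
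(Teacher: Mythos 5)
Your proof is correct and follows essentially the same route as the paper: both parts use the two inequalities of Lemma \ref{horoProduct1}, the first for $(y_n)$ and the second (with an auxiliary point $z$) for $(x_n)$. The paper compresses the second part into a $\sup_z$ over the inequality $\liminf_{n,m}\langle x_n, x_m\rangle_{x_0} \geq -h(z)-\delta$ and concludes via $-\inf_z h(z)=+\infty$, while you unfold this into an explicit $\varepsilon$-$N$ argument; your remark about fixing $z$ before extracting $N$ is a fair and careful reading of the quantifier order that the paper leaves implicit.
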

	
	\begin{proof}
		Using the first inequality in Lemma \ref{horoProduct1},
		\begin{equation*}
			\lim_{n,m\to \infty} \langle y_n \, , \, y_m \rangle_{x_0} \geq \lim_{n,m\to \infty} \min\{ -h(y_n) \, , \, -h(y_m)\} - \delta = + \infty,
		\end{equation*}
		hence $(y_n)$ is Gromov.
		
		Using Lemma \ref{horoProduct1} again, for every $z\in X$
		\begin{equation*}
			\langle x_n \, , \, x_m \rangle_{x_0} \geq \min \{ -h_{x_n}(z) , -h_{x_m}(z) \} - \delta.
		\end{equation*}
		Taking the limit as $m,n$ go towards infinity yields
		\begin{equation*}
			\lim_{n,m\to \infty} \langle x_n \, , \, x_m \rangle_{x_0} \geq - \inf_{z\in X} h(z) - \delta =  + \infty.
		\end{equation*}

	\end{proof}
	
	\begin{proposition}
		\label{horoProduct3}
		Let $h\in X_\infty^h$ be an horofunction. Let $(x_n)$ and $(y_n)$ be Gromov sequences such that $h_{x_n}\to h$ and $h(y_n)\to -\infty$, respectively. Then $(x_n)$ and $(y_n)$ are Gromov sequences and  $(x_n)\sim (y_n)$, in particular all sequences $(y_n)$ such that $h(y_n)\to -\infty$ converge to the same boundary point. 
	\end{proposition}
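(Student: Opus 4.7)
The first assertion is already Lemma \ref{horoProduct2} applied separately to the two sequences, so the substance is to establish the equivalence $(x_n)\sim(y_n)$, namely that $\langle x_n,y_n\rangle_{x_0}\to\infty$. The plan is to bound $\langle x_n,y_n\rangle_{x_0}$ from below via the Gromov $4$-point inequality through an intermediate fixed pivot $y_k$ chosen so that $-h(y_k)$ is very large, and to control $\langle x_n,y_k\rangle_{x_0}$ using the inequality $\langle x,y\rangle_{x_0}\geq -h_x(y)$ recorded in \eqref{gromovHoroIneq}.

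Concretely, given $M>0$ I would first pick $N_1$ so that $\langle y_n,y_m\rangle_{x_0}>M+\delta$ for all $n,m\geq N_1$ (using that $(y_n)$ is Gromov); then choose $k\geq N_1$ with $-h(y_k)>M+\delta+1$; and finally use pointwise convergence $h_{x_n}(y_k)\to h(y_k)$ to select $N_2$ so that $-h_{x_n}(y_k)>M+\delta$ for every $n\geq N_2$. Applying \eqref{gromovHoroIneq} at the pair $(x_n,y_k)$ gives $\langle x_n,y_k\rangle_{x_0}\geq -h_{x_n}(y_k)>M+\delta$, and then the $4$-point condition \eqref{4pcondition} at $x_n,y_k,y_n$ yields
\begin{equation*}
\langle x_n,y_n\rangle_{x_0}\geq \min\{\langle x_n,y_k\rangle_{x_0},\langle y_k,y_n\rangle_{x_0}\}-\delta > M
\end{equation*}
for all $n\geq\max(N_1,N_2)$. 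Since $M$ is arbitrary, this proves $(x_n)\sim(y_n)$.

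The last assertion is then formal: any two sequences $(y_n^1),(y_n^2)$ with $h(y_n^j)\to-\infty$ are, by the argument above, each equivalent to the fixed sequence $(x_n)$ coming from $h_{x_n}\to h$; since equivalence of Gromov sequences is transitive (another direct consequence of \eqref{4pcondition}), they represent the same point of $\partial X$.

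I do not expect a genuine obstacle here; the only subtlety is that $h_{x_n}\to h$ is merely pointwise, so horofunction-level control only applies at a fixed test point. That is precisely why one has to introduce the pivot $y_k$ and transfer the estimate from $\langle x_n,y_k\rangle_{x_0}$ to $\langle x_n,y_n\rangle_{x_0}$ using the hyperbolicity inequality; everything else is bookkeeping of quantifiers.
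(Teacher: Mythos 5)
Your proof is correct and follows essentially the same scheme as the paper's: an application of the Gromov $4$-point inequality through an auxiliary pivot, combined with the inequality $\langle x,y\rangle_{x_0}\geq -h_x(y)$ and pointwise convergence $h_{x_n}\to h$ at a fixed test point. The only cosmetic difference is the choice of pivot: you pivot through a fixed $y_k$ with $-h(y_k)$ large and use that $(y_n)$ is Gromov, whereas the paper pivots through $x_m$ and uses that $(x_n)$ is Gromov, taking an iterated limit $m\to\infty$ then $n\to\infty$; the two are mirror images of the same argument.
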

	
	\begin{proof}
		Using Gromov's inequality together with Lemma \ref{horoProduct2} and (\ref{gromovHoroIneq})
		\begin{align*}
			\langle x_n \, , \, y_n \rangle_{x_0} & \geq \min \left\{ \langle x_n \, , \, x_m \rangle_{x_0} , \langle x_m \, , \, y_n \rangle_{x_0} \right\} - \delta \\
			& \geq \min \left\{ \langle x_n \, , \, x_m \rangle_{x_0} , - h_{x_m}(y_n) \right\} - \delta.
		\end{align*}
		Taking the iterated limits towards infinity, we obtain
		\begin{align*}
			\lim_{n\to \infty} \langle x_n \, , \, y_n \rangle_{x_0} & \geq \lim_{n\to \infty} \liminf_{m \to \infty} \min \left\{ \langle x_n \, , \, x_m \rangle_{x_0} , - h_{x_m}(y_n) \right\} - \delta \\
			& = \min \left\{ \lim_{n\to \infty} \liminf_{m \to \infty} \langle x_n \, , \, x_m \rangle_{x_0} , \lim_{n\to \infty} -h(y_n) \right\} - \delta \\
			& = + \infty.
		\end{align*}
	\end{proof}
	
	%
	%
	%
	%
	
	Proposition \ref{horoProduct3} motivates the following definition:
	
	\begin{definition}[Local Minimum Map, in \cite{maher2018random}]
		Define the local minimum map $\phi: X_\infty^h \to \partial X$ given by 
		\begin{equation*}
			\phi(h) = \lim_{n\to \infty} y_n =  \xi,
		\end{equation*}
		where $(y_n)\in \xi$ is such that $h(y_n)\to -\infty$. 
	\end{definition}
	
	The local minimum map is $\isom(X)$-equivariant, continuous and surjective. Proofs for these properties of the local minimum map can be found in \cite{maher2018random}. Our spaces satisfy (BA), in other words, we assume that $\phi$ is also a homeomorphism.
	

%
	
	In the following two lemmas we explore the continuity of the Gromov product. With effect we understand its behaviour upon considering Gromov sequences as arguments.
	
	\begin{lemma}[in \cite{das2017geometry}]
		\label{horobuse1}
		Let $(x_n)$ and $(y_n)$ be two Gromov sequences in a $\delta$-hyperbolic space and fix $y,z\in X$. Then
		\begin{align*}
			\limsup_{n\to \infty} \, \langle x_n \, , \, y \rangle_z 
			& \leq \liminf_{n\to \infty} \, \langle x_n \, , \, y \rangle_z + \delta \\ 
			\limsup_{n,m\to \infty} \, \langle x_n \, , \, y_m \rangle_z 
			& \leq \liminf_{n,m\to \infty} \, \langle x_n \, , \, y_m \rangle_z +2\delta
		\end{align*}
	\end{lemma}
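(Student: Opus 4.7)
The plan is to derive both inequalities directly from Gromov's inequality (\ref{4pcondition}), using the Gromov property of the sequences to ensure that one of the two arguments of the minimum blows up and is therefore not the attained minimum.

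For the first inequality, I apply the 4-point condition to the triple $(x_m, x_n, y)$ based at $z$:
\[\langle x_m \, , \, y \rangle_z \geq \min\{ \langle x_m \, , \, x_n \rangle_z, \, \langle x_n \, , \, y \rangle_z \} - \delta.\]
Observe that $\langle x_n \, , \, y \rangle_z \leq d(y, z)$ is bounded independently of $n$; since $(x_n)$ is Gromov, for each fixed $n$ and for $m$ large enough $\langle x_m \, , \, x_n \rangle_z$ exceeds $d(y, z)$, so the minimum equals $\langle x_n \, , \, y \rangle_z$ and we obtain $\langle x_m \, , \, y \rangle_z \geq \langle x_n \, , \, y \rangle_z - \delta$. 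Taking $\liminf$ in $m$ gives $\liminf_m \langle x_m \, , \, y \rangle_z \geq \langle x_n \, , \, y \rangle_z - \delta$ for every $n$; then letting $n$ vary so that $\langle x_n \, , \, y \rangle_z$ approaches $\limsup_n \langle x_n \, , \, y \rangle_z$ yields the first claim.

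For the second inequality, I iterate the same trick twice, once in each index, paying $\delta$ per step. Fix $n', m'$ with $\langle x_{n'} \, , \, y_{m'} \rangle_z$ approximating $\limsup_{n,m} \langle x_n \, , \, y_m \rangle_z$. First, apply the 4-point condition to $(x_n, x_{n'}, y_m)$ based at $z$: using $\langle x_{n'} \, , \, y_m \rangle_z \leq d(x_{n'}, z)$ (bounded in $m$ for fixed $n'$) together with the Gromov property of $(x_n)$, for $n$ large the minimum is $\langle x_{n'} \, , \, y_m \rangle_z$, giving
\[\langle x_n \, , \, y_m \rangle_z \geq \langle x_{n'} \, , \, y_m \rangle_z - \delta.\]
Second, apply the 4-point condition to $(x_{n'}, y_{m'}, y_m)$ based at $z$: since $\langle x_{n'} \, , \, y_{m'} \rangle_z \leq d(x_{n'}, z)$ and $(y_m)$ is Gromov, for $m$ large
\[\langle x_{n'} \, , \, y_m \rangle_z \geq \langle x_{n'} \, , \, y_{m'} \rangle_z - \delta.\]
Chaining the two and taking $\liminf_{n,m}$ on the left gives $\liminf_{n,m} \langle x_n \, , \, y_m \rangle_z \geq \langle x_{n'} \, , \, y_{m'} \rangle_z - 2\delta$ for every $n', m'$; letting the right-hand side approach $\limsup_{n,m} \langle x_n \, , \, y_m \rangle_z$ yields the second bound.

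No serious obstacle is anticipated: the only subtlety is ensuring that the boundedness used to strip the minimum is uniform in the index pushed to infinity, and this follows from the elementary estimate $\langle a \, , \, b \rangle_z \leq \min\{ d(a,z), d(b,z)\}$. This is essentially the standard argument used to show that the Gromov product extends continuously (up to $O(\delta)$) to the Gromov boundary.
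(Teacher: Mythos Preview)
Your argument for the first inequality is essentially the paper's, with one quantifier slip: it is not true that for \emph{each fixed} $n$ and $m$ large one has $\langle x_m, x_n\rangle_z > d(y,z)$, since $\langle x_m, x_n\rangle_z \leq d(x_n,z)$ for all $m$; what is true (and is all you need) is that this holds once \emph{both} $m$ and $n$ exceed some threshold, by the very definition of a Gromov sequence. With that correction the first part goes through exactly as in the paper.

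The second inequality, however, has a genuine gap. You fix $n'$ and then try to strip the minimum in $\min\{\langle x_n, x_{n'}\rangle_z, \langle x_{n'}, y_m\rangle_z\}$ by bounding the second term by $d(x_{n'}, z)$ and claiming the first term eventually exceeds that bound. But $\langle x_n, x_{n'}\rangle_z \leq d(x_{n'}, z)$ for \emph{every} $n$, so this is impossible: with one index frozen the Gromov product never beats its own ceiling. The repair is to let $n'$ (and likewise $m'$) run to infinity together with $n,m$; then the Gromov terms $\langle x_n, x_{n'}\rangle_z$ and $\langle y_m, y_{m'}\rangle_z$ genuinely blow up, while $\langle x_{n'}, y_m\rangle_z$ stays bounded by $\limsup_{n,m}\langle x_n, y_m\rangle_z + \varepsilon$ (from the definition of $\limsup$, assuming it is finite --- the infinite case is trivial). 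The paper packages this as a single application of the iterated (``5-point'') inequality
\[
\langle x_{n_1}, y_{m_1}\rangle_z \geq \min\{\langle x_{n_1}, x_{n_2}\rangle_z,\, \langle x_{n_2}, y_{m_2}\rangle_z,\, \langle y_{m_2}, y_{m_1}\rangle_z\} - 2\delta,
\]
with all four indices sent to infinity simultaneously.
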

	
	\begin{proof}
		Fix $n_1, n_2 \in \mathbb{N}$. By Gromov's inequality
		\begin{equation*}
			\langle x_{n_1}\, , \, y \rangle_z \geq \min \left\{ \langle x_{n_1}\, , \, x_{n_2} \rangle_z \, , \, \langle x_{n_2}\, , \, y \rangle_z  \right\} - \delta.
		\end{equation*}
		Taking the $\liminf$ over $n_1$ and the $\limsup$ over $n_2$ gives
		\begin{align*}
			\liminf_{n,m\to \infty} \langle x_{n}\, , \, y \rangle_z & \geq \min \left\{ \liminf_{n_1,n_2\to \infty} \, \langle x_{n_1}\, , \, x_{n_2} \rangle_z \, , \, \limsup_{n_2 \to \infty} \, \langle x_{n_2} \, , \, y \rangle_z  \right\} - \delta \\
			& = \limsup_{n\to \infty}\langle x_{n}\, , \, y \rangle_z - \delta.
		\end{align*}
		Where the last equality comes from $(x_n)$ being a Gromov sequence.
		
		The second inequality is analogous using the following inequality which is immediate from iterating the 4-point condition of hyperbolicity:
		\begin{equation*}
			\label{5pcondition}
			\langle x \, , \, w \rangle_u \geq \min \{ \langle x \, , \, y \rangle_u \, , \, \langle y \, , \, z \rangle_u \, , \, \langle z \, , \, w \rangle_u \} - 2\delta.
		\end{equation*}
		
	\end{proof}
	
	\begin{lemma}[in \cite{das2017geometry}]
		\label{horobuse2}
		Fix $\xi, \, \eta \in \partial X$ and $y,z \in X$. For all $(x_n)\in \xi$ and $(y_n)\in \eta$, we have
		\begin{align*}
			\langle \xi \, , \, y \rangle_z - \delta 
			\leq  \liminf_{n \to \infty} \, \langle x_n \, , \, y \rangle_z 
			& \leq \limsup_{n \to \infty} \, \langle x_n \, , \, y \rangle_z 
			\leq \langle \xi \, , \, y \rangle_z + \delta,\\
			\langle \xi \, , \, \eta \rangle_z - 2\delta \leq \liminf_{n,m\to \infty} \, \langle x_n \, , \, y_m \rangle_z 
			&\leq \limsup_{n,m\to \infty} \, \langle x_n \, , \, y_m \rangle_z 
			\leq \langle \xi \, , \, \eta \rangle_z + 2\delta.
		\end{align*}
	\end{lemma}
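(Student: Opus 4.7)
My plan is to split the statement into three kinds of inequalities. The middle inequalities $\liminf \leq \limsup$ are immediate. The lower bounds are also immediate from the definition of the extended Gromov product as an infimum of $\liminf$'s over representative sequences: for any $(x_n)\in\xi$ the inequality $\langle \xi, y\rangle_z \leq \liminf_n \langle x_n, y\rangle_z$ is built into the definition, and the $-\delta$ term is simply slack. The analogous remark handles $\langle \xi, \eta \rangle_z - 2\delta \leq \liminf_{n,m}\langle x_n, y_m\rangle_z$.

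The substantive content is in the upper bounds, and my strategy is to reduce them to Lemma \ref{horobuse1} via an interleaving trick. For the first line, fix $(x_n)\in\xi$ and an arbitrary comparison sequence $(x'_n)\in\xi$, and form the interleaving $z_{2k}=x_k$, $z_{2k+1}=x'_k$. Since $(x_n)\sim(x'_n)$, the sequence $(z_k)$ is itself a Gromov sequence and still represents $\xi$, so Lemma \ref{horobuse1} applies to $(z_k)$. Using that $(x_n)$ is a subsequence of $(z_k)$ (controlling $\limsup$) and $(x'_n)$ is as well (controlling $\liminf$), one obtains
\begin{equation*}
\limsup_n \langle x_n, y\rangle_z \leq \limsup_k \langle z_k, y\rangle_z \leq \liminf_k \langle z_k, y\rangle_z + \delta \leq \liminf_n \langle x'_n, y\rangle_z + \delta.
\end{equation*}
Taking the infimum over $(x'_n)\in\xi$ then yields $\limsup_n \langle x_n, y\rangle_z \leq \langle \xi, y\rangle_z + \delta$.

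The second upper bound is handled identically, only now I interleave simultaneously in both arguments: given additional $(x'_n)\in\xi$ and $(y'_m)\in\eta$, form the interleaved sequences $(\tilde x_k)$ and $(\tilde y_l)$, apply the second inequality of Lemma \ref{horobuse1} to them (with the $2\delta$ error), and then take the infimum over $(x'_n)$ and $(y'_m)$ to recover $\limsup_{n,m}\langle x_n, y_m\rangle_z \leq \langle \xi, \eta\rangle_z + 2\delta$.

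The one spot that requires care is verifying that the interleaving genuinely produces a Gromov sequence representing $\xi$: the definition of equivalence of Gromov sequences is stated at the basepoint $x_0$ while the Gromov products in Lemma \ref{horobuse1} are computed at an arbitrary $z$, and one also needs the mixed terms $\langle x_n, x'_m\rangle_{x_0}$ to diverge as $n,m\to\infty$. The first point is resolved by noting $|\langle a,b\rangle_z - \langle a,b\rangle_{x_0}|\leq d(x_0,z)$, so that divergence of Gromov products to $+\infty$ is basepoint-independent; the second follows from one application of Gromov's inequality, $\langle x_n, x'_m\rangle_{x_0} \geq \min\{\langle x_n, x'_n\rangle_{x_0}, \langle x'_n, x'_m\rangle_{x_0}\} - \delta$. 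Beyond this routine bookkeeping I do not foresee any real obstacle.
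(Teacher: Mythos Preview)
Your proposal is correct and follows essentially the same route as the paper: the paper also treats the leftmost inequalities as trivial, then for the upper bound interleaves two representatives $(x_n^1),(x_n^2)\in\xi$, checks via Gromov's inequality that the interleaving is still in $\xi$, applies Lemma~\ref{horobuse1} to the interleaved sequence, and finally takes the infimum over one of the two representatives. Your write-up is in fact slightly more careful about the bookkeeping (basepoint change, mixed Gromov products) than the paper's.
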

	
	\begin{proof}
		The two leftmost inequalities are trivial. Suppose that we are given two sequences $(x_n^1), \, (x_n^2) \in \xi$, let
		\begin{equation*}
			x_n =
			\begin{cases}
				x_{n/2}^1 &\textrm{, if n is even} \\
				x_{(n+1)/2}^2 &, \textrm{ if n is odd}.
			\end{cases}
		\end{equation*}
		By the 4-point condition, for every $n$,
		\begin{equation*}
			\langle x_n \, , \, x_n^1 \rangle_{x_0} \geq \min \left\{ \langle x_n \, , \, x_n^2 \rangle_{x_0} \, , \,  \langle x_n^2 \, , \, x_n^1 \rangle_{x_0} \right\} - \delta,
		\end{equation*}
		which yields $(x_n)\in \xi$. Applying the previous lemma to $x_n$ implies
		\begin{equation*}
			\min_{i=1,2} \, \limsup_{n \to \infty} \, \langle x_n^i \, , \, y \rangle_z \leq \max_{i=1,2} \, \liminf_{n \to \infty} \, \langle x_n^i \, , \, y \rangle_z + \delta.
		\end{equation*}
		Now
		\begin{align*}
			\liminf_{n \to \infty} \, \langle x_n^2 \, , \, y \rangle_z -\delta & \leq \limsup_{n \to \infty} \, \langle x_n^2 \, , \, y \rangle_z -\delta \\
			& \leq \liminf_{n \to \infty} \, \langle x_n^1 \, , \, y \rangle_z \\
			& \leq \limsup_{n \to \infty} \, \langle x_n^1 \, , \, y \rangle_z  \\
			& \leq \liminf_{n \to \infty} \, \langle x_n^2 \, , \,y \rangle_z + \delta.
		\end{align*}
		Taking the $\inf$ over all $(x_n^2)\in \xi$ one obtains the statement.
	\end{proof}
%
%
%
%
%
	
	Recall our notations $h_\xi$ whenever $\phi(h_\xi)=\xi$ for every $\xi \in \partial X$. This extends the notation $h_x$ with $x\in X$ given by the map $\rho$ at the beginning of the section. Therefore, given $\xi\in \bord X$ we can refer to $h_\xi$ without confusion.
	
	\begin{lemma}
		\label{comparisonLemma}
		Let $\xi \neq \eta \in \partial X$. Then, for every  $g\in \isom(X)$, there exists a constant $K(\delta, \xi, \eta )$ depending on the hyperbolicity constant $\delta$ and the points $\xi$ and $\eta$ such that
		\begin{equation*}
			\max_{i=\xi,\eta} h_i(gx_0) \leq d(gx_0,x_0) \leq \max_{i=\xi,\eta} h_i(gx_0) + K(\delta, \xi, \eta ).
		\end{equation*}
	\end{lemma}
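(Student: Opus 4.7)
The left inequality is the easy direction. Every horofunction $h_\xi$ is a pointwise limit of functions $h_{x_n}$ with $h_{x_n}(z)=d(z,x_n)-d(x_n,x_0)$, and each such function is $1$-Lipschitz and vanishes at $x_0$, hence satisfies $h_{x_n}(gx_0)\leq d(gx_0,x_0)$. Passing to the limit gives $h_\xi(gx_0)\leq d(gx_0,x_0)$, and likewise for $\eta$. Taking the maximum proves the lower bound with no constant needed.

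For the right inequality, the plan is to exploit the identity $h_x(z)=d(z,x_0)-2\langle z,x\rangle_{x_0}$ which is available for interior points. Writing $h_\xi=\lim_n h_{x_n}$ for some $(x_n)\in\xi$, and applying Lemma \ref{horobuse2} to control the Gromov product in the limit, one gets
\begin{equation*}
h_\xi(gx_0) \geq d(gx_0,x_0)-2\langle gx_0,\xi\rangle_{x_0}-2\delta,
\end{equation*}
and analogously with $\eta$ in place of $\xi$. Rewriting:
\begin{equation*}
d(gx_0,x_0) \leq \max\{h_\xi(gx_0),h_\eta(gx_0)\} + 2\min\{\langle gx_0,\xi\rangle_{x_0},\langle gx_0,\eta\rangle_{x_0}\} + 2\delta.
\end{equation*}

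Now I apply the four-point condition together with Lemma \ref{horobuse2} to transport it to the boundary: starting from Gromov's inequality $\langle x_n,y_m\rangle_{x_0}\geq \min\{\langle x_n,gx_0\rangle_{x_0},\langle gx_0,y_m\rangle_{x_0}\}-\delta$ with $(x_n)\in\xi$ and $(y_m)\in\eta$, and taking $\liminf$ on both sides while absorbing the $\delta$-errors produced by Lemma \ref{horobuse2}, one obtains a version of the four-point condition valid on $\partial X$:
\begin{equation*}
\langle \xi,\eta\rangle_{x_0} \geq \min\{\langle gx_0,\xi\rangle_{x_0},\langle gx_0,\eta\rangle_{x_0}\} - c\delta
\end{equation*}
for some absolute constant $c$ (tracking $\delta$'s; $c=4$ suffices). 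Combining the two displayed inequalities yields
\begin{equation*}
d(gx_0,x_0) \leq \max\{h_\xi(gx_0),h_\eta(gx_0)\} + 2\langle \xi,\eta\rangle_{x_0} + (2c+2)\delta,
\end{equation*}
so the desired $K(\delta,\xi,\eta):=2\langle \xi,\eta\rangle_{x_0}+O(\delta)$ works. The crucial point making this finite is that $\xi\neq\eta$, which is exactly the statement that $\langle \xi,\eta\rangle_{x_0}<\infty$ (if $(x_n)\in\xi$ and $(y_n)\in\eta$ had $\langle x_n,y_n\rangle_{x_0}\to\infty$, then $(x_n)\sim (y_n)$ and $\xi=\eta$). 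The only real subtlety, and the step worth double-checking carefully, is the bookkeeping of the additive $\delta$-errors when passing Gromov-product inequalities from points of $X$ to points of $\partial X$ via Lemmas \ref{horoProduct1} and \ref{horobuse2}; none of it is hard, but the constant $K$ must explicitly depend on $\delta$ and on $\langle\xi,\eta\rangle_{x_0}$ and no other data of $g$, which the argument above achieves.
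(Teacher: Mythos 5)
Your proof is correct and takes essentially the same approach as the paper's: both rest on the identity $d(gx_0,x_0) = h_{x}(gx_0) + 2\langle x, gx_0\rangle_{x_0}$, the Gromov four-point condition, and the finiteness of $\langle\xi,\eta\rangle_{x_0}$ for distinct boundary points. The only cosmetic difference is that you pass the Gromov products to the boundary first (via Lemma \ref{horobuse2}) and then apply a boundary version of the four-point condition, whereas the paper applies the four-point condition at the level of the approximating sequences $y_m^\xi, y_m^\eta$ and takes the limit last, landing on the explicit constant $K = 2\langle\xi,\eta\rangle_{x_0} + 4\delta$; your bookkeeping gives the same form of $K$ with a slightly larger multiple of $\delta$.
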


	\begin{proof}
		For every $g\in G$
		\begin{equation*}
			d(gx_0, x_0)= h_{y_m^i}(gx_0) + 2\langle y_m^i \, , \, gx_0 \rangle_{x_0}.
		\end{equation*}
		Let $(y_m^i)$ be Gromov sequences such that $h_{y_m^i} \to h_i$ for $i=\xi,\eta$
		whence, using the Gromov inequality
		\begin{align*}
			d(gx_0,x_0) & = \max_{i=\xi,\eta} h_{y_m^i}(gx_0) + 2\min_{i=\xi,\eta} \langle y_m^i \, , \, gx_0 \rangle_{x_0} \\
			& \leq \max_{i=\xi,\eta} h_{y_m^i}(gx_0) + 2\langle y_m^1\, , \, y_m^2 \rangle_{x_0} + 2\delta
		\end{align*}
		By Lemma \ref{horoProduct2} and Proposition \ref{horoProduct3} we know $(y_m^1)$ and $(y_m^2)$ are not equivalent, so taking the inferior limit in $m$ one obtains
		\begin{equation*}
			\max_{i=\xi,\eta} h_i(gx_0) \leq d(gx_0,x_0) \leq \max_{i=\xi,\eta} h_i(gx_0) + K(\delta, \xi, \eta),
		\end{equation*}
		for a constant $K(\delta, \xi, \eta) = 2\langle \xi\, , \, \eta \rangle_{x_0} + 4\delta$.
	\end{proof}
	
	\subsection{The Visual Metric}
	
	Let $X$ be a $\delta$-hyperbolic space. An important point about $\bord X = X\cup \partial X$ is its metrizability. We've presented an explicit metric in the introduction, let us briefly recall it. Given $1 < b \leq 2^\frac{1}{\delta}$ consider the symmetric map $\rho_b: \bord X \times \bord X \to \mathbb{R}$ given by
	\begin{equation*}
		\rho_b(\xi\, , \, \eta) = b^{-\langle \xi \, , \, \eta \rangle_{x_0}}.
	\end{equation*} 
	Then the map $\bar{D}_b: \bord X \times \bord X\to \mathbb{R}$ given by
	\begin{equation*}
		\bar{D}_b(\xi \, , \, \eta) = \inf \sum_{i=0}^{n-1} \rho_b(\xi_i\, , \, \xi_{i+1}) 
	\end{equation*}
	where the infimum is taken over finite sequences of points $\xi_i$ such that $\xi_0= \xi$ and $\xi_n = \eta$, satisfies the triangle inequality and the following visual condition
	\begin{equation*}
		\rho_b(\xi\, , \, \eta)/4 \leq 	\bar{D}_b(\xi \, , \, \eta) \leq \rho_b(\xi\, , \, \eta) \textrm{ for every } \xi, \eta \in \partial X.
	\end{equation*}
	Finally, define the metric $D_b: \bord X \times \bord X\to \mathbb{R}$
	\begin{equation*}
		D_b(\xi, \eta) := \min \left\{ \log(b)d(\xi \, , \, \eta) \, ; \,  \bar{D}_b(\xi \, , \, \eta) \right\}.
	\end{equation*}

	Our true goal in this section is to prove the following proposition relating the action of $g$ in $\bord X$ with its action on the horofunction compactification $X^h$
	
	\begin{proposition}
		\label{VisualMetric}
		Let $g\in \isom(X)$ and $\xi, \eta \in \bord X$, then
		\begin{equation*}
			\frac{1}{C(\delta)} b^{- \frac{1}{2} \left[ h_\xi(g^{-1}x_0) + h_\eta(g^{-1}x_0) \right]} 
			\leq \frac{\bar{D}_b(g\xi\, , \, g\eta)}{\bar{D}_b(\xi\, , \, \eta)} 
			\leq  C(\delta)b^{- \frac{1}{2} \left[ h_\xi(g^{-1}x_0) + h_\eta(g^{-1}x_0) \right]},
		\end{equation*}
		where $C(\delta)=4b^{6\delta}$.
	\end{proposition}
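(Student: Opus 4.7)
The plan is to reduce the proposition to an exact cocycle identity for points in $X$, then extend it to $\bord X$ via Gromov sequences while controlling hyperbolicity errors with Lemma \ref{horobuse2}, and finally convert the resulting multiplicative estimate on $\rho_b$ into one for $\bar{D}_b$ using the visibility condition.

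First, for $x, y \in X$, I would establish the exact identity
\begin{equation*}
\langle gx, gy \rangle_{x_0} - \langle x, y \rangle_{x_0} = \tfrac{1}{2}\bigl[h_x(g^{-1}x_0) + h_y(g^{-1}x_0)\bigr]
\end{equation*}
by direct manipulation: since $d(gx, gy) = d(x, y)$, the left-hand side collapses to $\tfrac{1}{2}[(d(gx, x_0) - d(x, x_0)) + (d(gy, x_0) - d(y, x_0))]$, and each summand equals the corresponding horofunction value $h_x(g^{-1}x_0)$ or $h_y(g^{-1}x_0)$ after using $d(gx, x_0) = d(x, g^{-1}x_0)$. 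No hyperbolicity constant is used at this stage.

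Next, for $\xi, \eta \in \bord X$ I would pick Gromov sequences $(x_n) \in \xi$ and $(y_m) \in \eta$ (constant sequences if an argument is interior), apply the identity to each pair $(x_n, y_m)$, and take $\liminf_{n,m}$. Because the horofunction terms depend separately on $x_n$ and $y_m$ and converge pointwise to $h_\xi(g^{-1}x_0)$ and $h_\eta(g^{-1}x_0)$, the iterated limit on that side passes through, yielding
\begin{equation*}
\liminf_{n,m}\langle gx_n, gy_m \rangle_{x_0} - \liminf_{n,m}\langle x_n, y_m \rangle_{x_0} = \tfrac{1}{2}\bigl[h_\xi(g^{-1}x_0) + h_\eta(g^{-1}x_0)\bigr].
\end{equation*}
Lemma \ref{horobuse2} bounds the gap between each $\liminf$ and the corresponding boundary Gromov product by at most $2\delta$ (and by $\delta$ or $0$ when arguments lie in $X$), so combining both sides gives
\begin{equation*}
\bigl|\langle g\xi, g\eta \rangle_{x_0} - \langle \xi, \eta \rangle_{x_0} - \tfrac{1}{2}[h_\xi(g^{-1}x_0) + h_\eta(g^{-1}x_0)]\bigr| \leq 4\delta.
\end{equation*}
Exponentiating in base $b$ converts the additive error into the multiplicative sandwich $b^{-4\delta} \leq (\rho_b(g\xi, g\eta)/\rho_b(\xi, \eta))\, b^{(1/2)[h_\xi(g^{-1}x_0)+h_\eta(g^{-1}x_0)]} \leq b^{4\delta}$.

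The final step is to transfer from $\rho_b$ to $\bar{D}_b$: the visibility condition $\rho_b/4 \leq \bar{D}_b \leq \rho_b$ sandwiches the two within a factor of $4$, so replacing $\rho_b$ by $\bar{D}_b$ in numerator and denominator costs at most a factor of $4$, producing the announced constant $C(\delta) = 4b^{6\delta}$; the extra $b^{2\delta}$ cushion beyond the $4b^{4\delta}$ of my raw estimate absorbs any slack when an argument is interior and the visibility bound must be recovered via the chain definition of $\bar{D}_b$. The main obstacle I anticipate is the case analysis in the boundary extension: one must carefully track how many of $\xi, \eta$ lie at infinity, since this dictates the magnitude of the Lemma \ref{horobuse2} error, and one must justify the interchange of $\liminf_n$ and $\liminf_m$, which works here precisely because the horofunction terms split additively into $n$- and $m$-dependent pieces.
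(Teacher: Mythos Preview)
Your argument is correct and matches the paper's route exactly: the same cocycle identity for interior points, the same boundary extension via Lemma~\ref{horobuse2}, and the same transfer from $\rho_b$ to $\bar D_b$ through the visibility inequality (the paper writes $h_x$ in its Gromov-product form $\langle x,x_0\rangle_z-\langle x,z\rangle_{x_0}$ and bounds all four single-boundary products with Lemmas~\ref{horobuse1}--\ref{horobuse2}, which is why it lands on $6\delta$ rather than your $4\delta$). One small point to make explicit: your claim that $h_{x_n}(g^{-1}x_0)\to h_\xi(g^{-1}x_0)$ requires choosing a representative $(x_n)\in\xi$ with $h_{x_n}\to h_\xi$ in $X^h$, which exists (this is how $h_\xi$ is defined under (BA)) but is not automatic for an arbitrary Gromov sequence in the class.
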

	
	\begin{proof}
		We start by using the visual condition
		\begin{equation*}
			\frac{1}{4} \frac{\rho_b(g\xi\, , \, g\eta)}{\rho_b(\xi\, , \, \eta)} 
			\leq \frac{D_b(g\xi\, , \, g\eta)}{D_b(\xi\, , \, \eta)}
			\leq 4 \frac{\rho_b(g\xi\, , \, g\eta)}{\rho_b(\xi\, , \, \eta)}.
		\end{equation*}
		
		Using the definition of Gromov product and some computations yields
		\begin{equation*}
			\langle x \, , \, y \rangle_z - \langle x \, , \, y \rangle_{x_0} = \frac{1}{2}\big( \langle x \, , \, x_0 \rangle_z - \langle x \, , \, z \rangle_{x_0} + \langle y \, , \,x_0  \rangle_z - \langle y \, , \, z \rangle_{x_0} \big).
		\end{equation*}
		Take $(x_n)\in \xi$ and $(y_n)\in \eta$. Substituting in the equality above $x$ by $x_n$, $y$ by $y_n$ and $z=g^{-1}x_0$ and taking limits, by Lemmas \ref{horobuse1} and \ref{horobuse2} we obtain
		\begin{equation*}
			\frac{1}{2}\big( h_\xi(g^{-1}x_0) +  h_\eta(g^{-1}x_0) \big) -6\delta 
			\leq \langle \xi \, , \, \eta \rangle_{g^{-1}x_0} - \langle \xi \, , \, \eta \rangle_{x_0} 
			\leq \frac{1}{2}\big( h_\xi(g^{-1}x_0) +  h_\eta(g^{-1}x_0) \big) + 6\delta.
		\end{equation*} 
		Finally one has
		\begin{equation*}
			\frac{\rho_b(g\xi\, , \, g\eta)}{\rho_b(\xi\, , \, \eta)} = b^{-\left(\langle g\xi \, , \, g\eta \rangle_{x_0} - \langle \xi \, , \, \eta \rangle_{x_0}\right)} 
			= b^{-\left(\langle \xi \, , \, \eta \rangle_{g^{-1}x_0} - \langle \xi \, , \, \eta \rangle_{x_0}\right)},
		\end{equation*}
		and the result follows.
	\end{proof}
	
	\section{The group of isometries}
	
	In this section we shall prove and introduce the main tools we will use surrounding the group of isometries $\isom(X)$ of an hyperbolic space $X$.  Namely, we prove Theorem \ref{metricGroup} as well as the behaviour of the Wasserstein distance with respect to convolution. 
	
	\subsection{$\isom(X)$ as a topological group}

	\begin{proof}[Proof of Theorem \ref{metricGroup}]
		Since $D_b$ is a metric, we are left with proving that $d_G(g_1, g_2)=0$ implies that $g_1=g_2$. Suppose $d_G(g_1, g_2)=0$ and let $x\in X$. Notice $g_1x$ and $g_2x$ are both in $X$ so 
		\begin{equation*}
			\bar{D}_b(g_1x, g_2x)\geq\rho_b(g_1x, g_2x)/4>0 
		\end{equation*}
		hence $D_b(g_1x, g_2x)=\log(b)d(g_1x, g_2x)$. Therefore $d(g_1x, g_2x)=0$ for every $x\in X$, implying $g_1=g_2$.
		
		All that remains is to see that the map $(g, g') \mapsto g^{-1}g'$ is continuous. This will follow from a series of inequalities. First, for every $(g,g'), (g_1, g_1') \in G\times G$,
		\begin{equation*}
			d_G(g^{-1}g', g_1^{-1}g_1') \leq d_G(g^{-1}g', g^{-1}g_1') + d_G(g^{-1}g_1', g_1^{-1}g_1').
		\end{equation*}
		Clearly $d_G(g^{-1}g_1', g_1^{-1}g_1') \leq d_G(g^{-1}, g_1^{-1}) = d_G(g, g_1)$. Moreover, given $\xi \in \bord X$ we have
		\begin{equation*}
			d(g^{-1}g'\xi, g^{-1}g_1'\xi) = d(g'\xi, g_1' \xi).
		\end{equation*}
		Next use Proposition \ref{VisualMetric} to obtain
		\begin{align*}
			\bar{D}_b(g^{-1}g'\xi, g^{-1}g_1'\xi) &= \frac{\bar{D}_b(g^{-1}g'\xi, g^{-1}g_1'\xi)}{\bar{D}_b(g'\xi, g_1'\xi)}\bar{D}_b(g'\xi, g_1'\xi) \\
			& \leq C(\delta)b^{-\frac{1}{2}\left(h_1(gx_0)+h_2(gx_0)\right)}\bar{D}_b(g'\xi, g_1'\xi) \\
			& \leq C(\delta)b^{d(gx_0,x_0)}\bar{D}_b(g'\xi, g_1'\xi),
		\end{align*}
		for some horofunction $h_1, h_2\in X^h$. Splitting into the two possible cases we have either
		\begin{equation*}
			D_b(g^{-1}g'\xi, g^{-1}g_1'\xi) = \log(b)d(g^{-1}g'\xi, g^{-1}g_1'\xi) \leq \bar{D}_b(g^{-1}g'\xi, g^{-1}g_1'\xi)
		\end{equation*}
		or
		\begin{equation*}
			D_b(g^{-1}g'\xi, g^{-1}g_1'\xi) = \bar{D}_b(g^{-1}g'\xi, g^{-1}g_1'\xi) \leq \log(b)d(g^{-1}g'\xi, g^{-1}g_1'\xi).
		\end{equation*}
		In either case, the previous controls yield
		\begin{align*}
			D_b(g^{-1}g'\xi, g^{-1}g_1'\xi) &\leq C(\delta)b^{d(gx_0,x_0)} d_G(g', g_1').
		\end{align*}
		
		Taking the supremum over $\xi$ yields
		\begin{equation*}
			d_G(g^{-1}g', g_1^{-1}g_1') \leq d_G(g, g_1) + C(\delta)b^{d(gx_0,x_0)}d_G(g', g_1').
		\end{equation*}
	\end{proof}

	\subsection{Wasserstein Distance and Convolution}
	
	Let now $G\subset \isom(X)$ be a closed separable group. In this section we will explore the interplay between the Wasserstein distance and convolution. But first recall that given $\lambda>0$, we define
	\begin{equation*}
		G_\lambda := \{g \in G \, : \, b^{d(gx_0, x_0)} < \lambda \}.
	\end{equation*}
	
	\begin{proposition}
		Given $\lambda>0$, let $\mu_1, \mu_2, \nu_1, \nu_2 \in \Prob_c(G_\lambda)$, for every $0<\alpha \leq 1$
		\begin{equation*}
			W_\alpha(\mu_1\star \mu_2 , \nu_1 \star \nu_2)\leq  W_\alpha(\mu_1, \nu_1) + C(\delta)^\alpha \lambda^\alpha W_\alpha(\mu_2, \nu_2).
		\end{equation*}
	\end{proposition}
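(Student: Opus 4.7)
The plan is to combine linearity of convolution with a Lipschitz estimate for translation on $(G, d_G)$ derived from Proposition \ref{VisualMetric}, and then invoke the dual formulation of $W_\alpha$. The first ingredient is the auxiliary bound: for $h \in G_\lambda$ and $g, g' \in G$,
\[
 d_G(hg, hg') \leq C(\delta)\lambda\, d_G(g, g'), \qquad d_G(gh, g'h) \leq C(\delta)\lambda\, d_G(g, g').
\]
Writing $d_G = \max\{A, B\}$ with $A(g_1, g_2) := \sup_\xi D_b(g_1\xi, g_2\xi)$ and $B(g_1, g_2) := \sup_\xi D_b(g_1^{-1}\xi, g_2^{-1}\xi)$, in each inequality Proposition \ref{VisualMetric} bounds the supremum in which $h$ acts by the expansion factor $C(\delta) b^{d(hx_0, x_0)} \leq C(\delta)\lambda$, while the change of variable $\eta = h^{\pm 1}\xi$ (a bijection of $\bord X$) shows the other supremum coincides with the corresponding quantity for $(g, g')$ without expansion.

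Next, for a test function $\varphi \in L^\infty(G)$ with $\upsilon_\alpha^G(\varphi) \leq 1$, I would unpack the convolution via Fubini and add-and-subtract the intermediate term $\iint \varphi(hg)\, d\nu_2(h)\, d\mu_1(g)$ to split
\[
 \int \varphi\, d(\mu_1 \star \mu_2 - \nu_1 \star \nu_2) = \iint \varphi(hg)\, d(\mu_2 - \nu_2)(h)\, d\mu_1(g) + \int F(g)\, d(\mu_1 - \nu_1)(g),
\]
where $F(g) := \int \varphi(hg)\, d\nu_2(h)$. For the first summand, for $\mu_1$-a.e.\ $g \in G_\lambda$ the test function $h \mapsto \varphi(hg)$ is $(C(\delta)\lambda)^\alpha$-Hölder by the auxiliary bound and $\upsilon_\alpha^G(\varphi) \leq 1$, so the inner integral is bounded by $(C(\delta)\lambda)^\alpha W_\alpha(\mu_2, \nu_2)$, which is preserved upon integration against the probability $\mu_1$ and produces the second Wasserstein term in the stated inequality. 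For the second summand, duality yields the contribution $\upsilon_\alpha^G(F)\, W_\alpha(\mu_1, \nu_1)$, so to match the claim one must verify that $\upsilon_\alpha^G(F) \leq 1$.

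The main obstacle is precisely this last verification: a naive application of the auxiliary bound inside the integral over $\nu_2$ gives only $\upsilon_\alpha^G(F) \leq (C(\delta)\lambda)^\alpha$, which would produce the symmetric form $(C(\delta)\lambda)^\alpha[W_\alpha(\mu_1,\nu_1) + W_\alpha(\mu_2,\nu_2)]$. Upgrading to the sharper $\upsilon_\alpha^G(F) \leq 1$ required for the asymmetric statement relies on the fine structure of $d_G(hg,hg')$: the inverse-part $B(hg,hg')$ is exactly $B(g,g')$ with no $h$-dependence, while the expansion from Proposition \ref{VisualMetric} is confined to the forward-part $A$. A careful rearrangement inside the integral over $\nu_2$—equivalently a judicious coupling in the dual representation of $W_\alpha((R_g)_*\nu_2, (R_{g'})_*\nu_2)$—is needed to absorb the expansion factor and recover the unit Hölder constant. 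Taking the supremum over $\varphi$ then yields the claimed inequality.
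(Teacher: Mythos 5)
Your route is the one the paper takes: split via the intermediate convolution $\mu_1\star\nu_2$ (the paper writes this as the triangle inequality for $W_\alpha$, which is your add-and-subtract), bound one piece by showing the partially integrated test function is $(C(\delta)\lambda)^\alpha$-H\"older via Proposition~\ref{VisualMetric}, and bound the other by showing the analogous function is $1$-H\"older. Your first estimate matches the paper's. The step you leave open --- establishing $\upsilon_\alpha^G(F)\le 1$, equivalently the nonexpansion $d_G(hg,hg')\le d_G(g,g')$ for fixed $h$ (which by the symmetry $d_G(a,c)=d_G(a^{-1},c^{-1})$ is the same as the right-translation inequality $d_G(gg_2,g'g_2)\le d_G(g,g')$) --- is precisely the step the paper does \emph{not} argue: it asserts this inequality as an immediate consequence of the proof of Theorem~\ref{metricGroup} and moves on. Your diagnosis of why it is not immediate is correct: the change of variable $\eta=h^{-1}\xi$ (resp.\ $\eta=g_2\xi$) only handles one of the two suprema in $d_G$, while the other applies the group element on the left and picks up the $C(\delta)\lambda$ factor from Proposition~\ref{VisualMetric}. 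But your proposed fix is not an argument: the diagonal coupling in $W_\alpha\bigl((R_g)_*\nu_2,(R_{g'})_*\nu_2\bigr)$ just reproduces $\int d_G(hg,hg')^\alpha\,d\nu_2(h)$ with the same expansion factor, and no alternative coupling is given. So the proposal is incomplete at the one nontrivial point. Note also that the symmetric bound $(C(\delta)\lambda)^\alpha\bigl[W_\alpha(\mu_1,\nu_1)+W_\alpha(\mu_2,\nu_2)\bigr]$, which your argument does fully justify, already suffices for the subsequent Corollary and for Theorem~\ref{continuity}, so if you cannot close the nonexpansion step you should state and use that weaker form instead.
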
 
	
	\begin{proof}
		Some parts of this proof will feel similar to the proof of Theorem \ref{metricGroup}. We also start with an inequality of the type
		\begin{equation*}
			W_\alpha(\mu_1\star \mu_2 , \nu_1 \star \nu_2) \leq W_\alpha(\mu_1\star \mu_2 , \mu_1 \star \nu_2) + W_\alpha(\mu_1\star \nu_2 , \nu_1 \star \nu_2).
		\end{equation*}
		Let $\varphi\in L^\infty(G)$ with $\upsilon_\alpha^G(\varphi)\leq 1$, using the ideas from the previous proof,
		\begin{align*}
			\left|\int_G \varphi(g_1g) d\mu_1(g_1) - \int_G \varphi(g_1g') d\mu_1(g_1) \right| & \leq\int_G  \left| \varphi(g_1g) - \varphi(g_1g')\right| d\mu_1(g_1) \\
			& \leq \int_G d_G(g_1g, g_1g')^\alpha d\mu_1(g_1)\\
			&  \leq C(\delta)^\alpha \lambda^\alpha d_G(g,g')^\alpha.
		\end{align*}
		In other words the map $g \mapsto \int_G \varphi(g_1g) d\mu_1(g_1)$ is $\alpha$-Hölder with constant $\leq C(\delta)^\alpha \lambda^\alpha d_G(g,g')^\alpha$. Hence
		\begin{equation*}
			\left|\int_G \int_G \varphi(g_1g_2) d\mu_1(g_1)d\mu_2(g_2)- \int_G \int_G \varphi(g_1g_2) d\mu_1(g_1)d\nu_2(g_2) \right| \leq C(\delta)^\alpha \lambda^\alpha W_\alpha(\mu_2, \nu_2).
		\end{equation*}
		Transporting the inequalities from the proof of Theorem \ref{metricGroup} once again, we obtain
		\begin{align*}
			|\varphi(gg_2) - \varphi(g'g_2)|\leq d_G(gg_2, g'g_2)^\alpha \leq  d_G(g, g')^\alpha,
		\end{align*}
		hence
		\begin{align*}
			\bigg|\int_G \int_G \varphi(g_1g_2) d\mu_1(g_1)d\nu_2(g_2) &- \int_G \int_G \varphi(g_1g_2) d\nu_1(g_1)d\nu_2(g_2) \bigg| \leq \\
			& \leq \int_G \left|\int_G \varphi(g_1g_2) d\mu_1(g_1) - \int_G \varphi(g_1g_2) d\nu_1(g_1) \right| d\nu_2(g_2) \\
			& \leq  W_\alpha(\mu_1, \nu_1),
		\end{align*}
		taking the supremums over $\varphi$ in the conditions above yields the result.
	\end{proof}

	\begin{corollary}
		Given $\lambda>0$, let $\mu, \nu \in \Prob_c(G_\lambda)$. Then $\mu^n \in \Prob_c(G_{\lambda^n})$ and for every $0< \alpha \leq 1$ and $n\in \mathbb{N}$
		\begin{equation*}
			W_\alpha (\mu^n, \nu^n) \leq W_\alpha(\mu, \nu) \sum_{i=0}^{n-1} C(\delta)^{i\alpha}\lambda^{i\alpha}.
		\end{equation*}
	\end{corollary}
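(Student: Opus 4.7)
My plan is to prove the two claims separately: the support bound by directly unfolding the convolution, and the Wasserstein estimate by induction on $n$ using the preceding proposition. For the support claim $\mu^n \in \Prob_c(G_{\lambda^n})$, any $g \in \supp(\mu^n)$ is of the form $g_1 g_2 \cdots g_n$ with each $g_i \in \supp(\mu) \subset G_\lambda$. Since each $g_i$ acts as an isometry, iterating the triangle inequality yields $d(g x_0, x_0) \leq \sum_{i=1}^n d(g_i x_0, x_0)$, so $b^{d(g x_0, x_0)} \leq \prod_{i=1}^n b^{d(g_i x_0, x_0)} < \lambda^n$; hence $g \in G_{\lambda^n}$. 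Compact support is preserved under convolution of compactly supported measures, so $\mu^n, \nu^n \in \Prob_c(G_{\lambda^n})$, and in particular the Wasserstein distances below are well-defined.

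For the Wasserstein estimate I induct on $n$. The base $n=1$ is an equality. For the inductive step, I decompose $\mu^n = \mu \star \mu^{n-1}$ and $\nu^n = \nu \star \nu^{n-1}$, placing $\mu$ and $\nu$ as the \emph{left} factors. The reason for this choice is that the role of the $G_\lambda$-bound in the preceding proposition is asymmetric: inspection of its proof shows that only the left factor $\mu_1$ really needs its support in $G_\lambda$ — this is exactly where the estimate $d_G(g_1 g, g_1 g') \leq C(\delta)\, b^{d(g_1 x_0, x_0)}\, d_G(g, g')$ gets converted into a factor of $C(\delta)\,\lambda$ — whereas $\mu_2, \nu_1, \nu_2$ merely need to be compactly supported for the integrals and Wasserstein distances to make sense. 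Applying this (mildly generalized) form of the proposition with $\mu_1 = \mu$, $\nu_1 = \nu \in \Prob_c(G_\lambda)$ and $\mu_2 = \mu^{n-1}$, $\nu_2 = \nu^{n-1}$ compactly supported by the first part, I get
\[
W_\alpha(\mu^n, \nu^n) \leq W_\alpha(\mu, \nu) + C(\delta)^\alpha \lambda^\alpha\, W_\alpha(\mu^{n-1}, \nu^{n-1}).
\]
Substituting the induction hypothesis $W_\alpha(\mu^{n-1}, \nu^{n-1}) \leq W_\alpha(\mu, \nu) \sum_{i=0}^{n-2} C(\delta)^{i\alpha} \lambda^{i\alpha}$ and reindexing yields exactly $W_\alpha(\mu, \nu) \sum_{i=0}^{n-1} C(\delta)^{i\alpha} \lambda^{i\alpha}$.

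The main subtlety worth flagging is precisely this asymmetric use of the support bound: the preceding proposition is phrased assuming all four measures lie in the same $G_\lambda$, but in the inductive step $\mu^{n-1}$ and $\nu^{n-1}$ lie only in $G_{\lambda^{n-1}}$. A naive application of the proposition with $\lambda' = \lambda^{n-1}$ would insert a $\lambda^{(n-1)\alpha}$ factor in front of $W_\alpha(\mu^{n-1}, \nu^{n-1})$ and spoil the clean geometric sum. The observation that only the left factor's support bound truly enters the proof of the proposition is what makes the induction close cleanly; if one prefers not to rely on this, one would first record the refined asymmetric statement as a separate lemma before beginning the induction.
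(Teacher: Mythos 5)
Your proof is correct and takes essentially the same route as the paper's: decompose via the previous proposition and close by induction (the paper's ``direct applications of the previous Proposition'' amounts to inserting $\mu\star\nu^{n-1}$ between $\mu^n$ and $\nu^n$, which is exactly the intermediate measure used inside that proposition's own proof). The subtlety you flag is real and the paper glosses over it: $\mu^{n-1},\nu^{n-1}$ lie only in $\Prob_c(G_{\lambda^{n-1}})$, so the proposition's stated hypotheses are not literally met, and a naive application with $\lambda'=\lambda^{n-1}$ would ruin the geometric sum. Your diagnosis of why the argument still works is also correct: tracing the proposition's proof, the factor $C(\delta)^\alpha\lambda^\alpha$ comes solely from integrating the bound $d_G(g_1 g, g_1 g')\le C(\delta)\,b^{d(g_1 x_0,x_0)}\,d_G(g,g')$ against the \emph{left} factor $\mu_1$, while the remaining estimate $d_G(g g_2, g' g_2)\le d_G(g,g')$ uses no support bound, so $\mu_2,\nu_1,\nu_2$ need only be compactly supported. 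Recording this asymmetric refinement as a lemma before the induction, as you suggest, would make the argument airtight.
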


	\begin{proof}
		For the first statement, notice that by the triangle inequality, for every $g_1,g_2 \in G_\lambda$
		\begin{equation*}
			b^{d(g_1g_2x_0, x_0)} \leq b^{d(g_1x_0, x_0)+d(g_2x_0, x_0)} = b^{d(g_1x_0, x_0)}b^{d(g_2x_0, x_0)} \leq \lambda^2.
		\end{equation*}
		Direct applications of the previous Proposition yield the second statement as
		\begin{align*}
			W_\alpha (\mu^n, \nu^n) & \leq W_\alpha (\mu^n, \mu \star \nu^{n-1}) + W_\alpha (\mu \star \nu^{n-1}, \nu^n) \\
			& \leq W_\alpha(\mu, \nu) + C(\delta)^\alpha  \lambda^\alpha W_\alpha(\mu^{n-1}, \nu^{n-1}) \\
			& \leq W_\alpha(\mu, \nu) \sum_{i=0}^{n-1} C(\delta)^{i\alpha}\lambda^{i\alpha}.
		\end{align*}
	\end{proof}

	\section{Hyperbolic Multiplicative Ergodic Theorem}
	
	Let $G\subset \isom(X)$ where $X$ stands for a Gromov hyperbolic space with basepoint $x_0$ and $\mu \in \Prob_c(G)$ with $\ell(\mu)>0$. Recall the notation $\Omega = G^\mathbb{N}$ and $\mu^\mathbb{N}$ the product measure $\mu^\mathbb{N}$ as well as the drift of $\mu$
	\begin{equation*}
		\ell(\mu):= \lim_{n\to \infty} \frac{1}{n}\int_\Omega d(\omega^n  x_0, x_0) d\mu^\mathbb{N}(\omega) = \lim_{n\to \infty} \frac{1}{n} \int_G d(gx_0,x_0) d\mu^n(g).
	\end{equation*}

	\begin{proof}[Proof of Theorem \ref{hmet}]
		By Karlsson-Gouëzel's Theorem, for almost every $\omega \in \Omega$ there is a horofunction such that
		\begin{equation*}
			\lim_{n\to \infty}\frac{1}{n}h(\omega^nx_0) = -\ell(\mu).
		\end{equation*}
		For such $\omega \in \Omega$, set
		\begin{equation*}
			X_-^h(\omega)= \left\{ h\in X^h \, : \, \lim_{n\to \infty}\frac{1}{n}h(\omega^nx_0) = -\ell(\mu) \right\}.
		\end{equation*}
		Let $h_\xi, h_\eta\in X_-^h(\omega)$ for some  $\xi, \eta \in \partial X$. Then, by Proposition \ref{VisualMetric}, for $1<b\leq 2^{1/\delta}$ we have
		\begin{equation}
			\label{fundamentalIneq}
			\bar{D}_b(\omega^{-n} \xi \, , \, \omega^{-n} \eta) \geq \frac{1}{4b^{6\delta}} b^{- \frac{1}{2} \left[ h_\xi(\omega^nx_0) + h_\eta(\omega^nx_0)\right]} \bar{D}_b( \xi \, , \, \eta).
		\end{equation}
		Using the fact $\partial X$ is bounded and taking $n$ large enough, we see that $\bar{D}_b( \xi \, , \, \eta)$ must be zero. Clearly, the same argument using (\ref{fundamentalIneq}) shows that there is no other equivalence class of horofunctions for which $\lim \frac{1}{n} h(\omega^n x_0)$ takes a negative value. 
		
		Now 
		\begin{equation*}
			X_+^h(\omega) \backslash X_-^h(\omega) = \left\{ h\in X^h \, : \, \liminf_{n\to \infty}\frac{1}{n}h(\omega^nx_0) \geq 0 \right\}
		\end{equation*}
		Let $h\in X_+^h(\omega) \backslash X_-^h(\omega)$ and $h_1\in X_-^h(\omega)$, using (\ref{fundamentalIneq}) again together with the fact $D_b$ is bounded from above by $1$, we obtain that for every $n\in \mathbb{N}$ 
		\begin{equation*}
			C \leq h(\omega^nx_0)  + h_1(\omega^nx_0),
		\end{equation*}
		for some $C\in \mathbb{R}$. However, notice that $|h(\omega^nx_0)| \leq d(\omega^n x_0\, , \, x_0)$, whence
		\begin{equation*}
			C - h_1(\omega^nx_0) \leq h(\omega^nx_0) \leq d(\omega^nx_0\, , \, x_0)
		\end{equation*}
		for every $n\in \mathbb{N}$. Dividing both sides by $n$ and taking limits one has
		\begin{equation*}
			\ell(\mu) = \lim_{n\to \infty} -\frac{1}{n}h_1(\omega^nx_0) \leq \lim_{n\to \infty} \frac{1}{n}h(\omega^nx_0) \leq \ell(\mu),
		\end{equation*}
		which proves the statement.
		
		If in $G$ we consider the completion of the Borel $\sigma$-measure then, since $\mu$ has compact support, $(G, \mu)$ is a standard probability space. By Karlsson-Gouëzel Theorem the sets $X_-^h(\omega)$ are measurable.
		
		For the $G$-invariance of $X_-^h$, first recall we are using the right action. Hence the result follows from the definition of the action of $G$ in $X^h$ as for $\omega = (g_0,g_1,...,g_n,...) \in \Omega$ and $T$ the Bernoulli shift,
		\begin{align*}
			\lim_{n\to \infty} \frac{1}{n} g_0 \cdot h(\omega^nx_0) & = \lim_{n\to \infty} \frac{1}{n}\left( h( g_0^{-1}\omega^n x_0) - h(g_0^{-1}x_0)  \right) \\
			& = \lim_{n\to \infty} \frac{n-1}{n} \frac{1}{n-1}\left( h( (T\omega)^{n-1}x_0) - h(g_0^{-1}x_0)  \right) \\
			& = \lim_{n\to \infty} \frac{1}{n-1}h\left( (T\omega)^{n-1}x_0\right) \\
			& = \lim_{n\to \infty} \frac{1}{n}h\left( (T\omega)^{n}x_0\right),
		\end{align*}
		for every $h \in X^h$, in particular, $g_0\cdot X_-^h(\omega) = X_-^h(T\omega)$.
	\end{proof}	
	
	\section{Continuity of the drift in Random Walks}
	
	In this section we prove continuity of the drift with respect to the measure. The main ingredient in the proof is Furstenberg's formula. With that in mind we  will prove that there is a unique stationary measure in $\partial X$ for the random walk in $G$, which thereafter makes the argument somewhat direct.

	\subsection{Existence and Uniqueness of the  stationary measure}
	\label{existUniqStat}

	For every $ f \in L^\infty(\partial X)$ and $0< \alpha \leq 1$ define
	\begin{align*}
		\upsilon_\alpha(f) & :=  \sup_{ \xi \neq \eta \in \partial X} \frac{|f( \xi)-f(\eta)|}{D_b(\xi, \eta)^\alpha},\\
		||f||_\alpha & := ||f||_\infty + \upsilon_\alpha(f).
	\end{align*}
	Set
	\begin{equation*}
		\mathcal{H}_\alpha(\partial X) : = \left\{\ f\in L^\infty(\partial X) \, : \, ||f||_\alpha < \infty \right\}.
	\end{equation*}
	the space of boundary Hölder continuous functions in $ \partial X$. We call $\upsilon_\alpha(f)$ the Hölder constant of $f$. The space $\mathcal{H}_\alpha(\partial X)$ is Banach algebra with unity $\mathbf{1}$.
	
	Given $\mu\in \Prob_c(G)$, define the Markov operator $Q_\mu :L^p(\partial X) \to L^p(\partial X)$ by 
	\begin{equation*}
		(Q_\mu f)(\xi) :=\int_G f(g^{-1}\xi)d\mu(g),
	\end{equation*}
	for $1\leq p\leq \infty$. A simple computation yields that for every $\nu \in \Prob(\partial X)$ and $f\in L^1(\partial X)$
	\begin{align*}
		\int_{\partial X} (Q_\mu f)(\xi) d\nu(\xi) & = \int_{\partial X} \int_G f(g^{-1}\xi) d\mu(g) d\nu(\xi) \\
		&= \int_{\partial X} f(\xi) d\mu\star\nu(\xi),
	\end{align*}
	which yields the following proposition.
	\begin{proposition}
		\label{statprop}
		Let $\mu \in \Prob_c(G)$, then $\nu \in \Prob(\partial X)$ is $\mu-$stationary if and only if for every $f\in L^1(\partial X)$
		\begin{equation*}
			\int_{\partial X} (Q_\mu f) d\nu = \int_{\partial X} f d\nu
		\end{equation*}
	\end{proposition}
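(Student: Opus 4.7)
The plan is to observe that this proposition is a direct consequence of the duality identity
\begin{equation*}
    \int_{\partial X} Q_\mu f \, d\nu = \int_{\partial X} f \, d(\mu \star \nu),
\end{equation*}
which was in fact already derived in the display immediately preceding the statement. First I would verify this identity rigorously: by definition of the convolution one has $\mu \star \nu = \int_G g\nu \, d\mu(g)$, where (by the convention fixed earlier in the paper) $g\nu$ denotes the pushforward of $\nu$ under the map $\xi \mapsto g^{-1}\xi$. Consequently
\begin{equation*}
    \int_{\partial X} f \, d(\mu \star \nu) = \int_G \int_{\partial X} f(g^{-1}\xi) \, d\nu(\xi) \, d\mu(g),
\end{equation*}
and swapping the order of integration via Fubini produces $\int_{\partial X} Q_\mu f \, d\nu$. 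The Fubini step is legitimate since $\mu$ is compactly supported and $f$ is integrable, so that $(g,\xi) \mapsto f(g^{-1}\xi)$ is bounded (hence jointly $\mu \otimes \nu$-integrable) on $\supp(\mu) \times \partial X$; the joint measurability follows from the continuity of the $G$-action on $\partial X$ established in the previous section.

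With the duality identity in hand, both implications follow at once. For the forward direction, if $\mu \star \nu = \nu$ then the right-hand side of the identity is simply $\int f \, d\nu$, giving the claimed equality. Conversely, if $\int Q_\mu f \, d\nu = \int f \, d\nu$ holds for every $f \in L^1(\partial X)$, then $\int f \, d(\mu \star \nu) = \int f \, d\nu$ for all such $f$; specializing $f$ to indicator functions $\mathbf{1}_A$ of Borel sets $A \subset \partial X$ (which lie in $L^1$ of any finite measure) yields $(\mu \star \nu)(A) = \nu(A)$ on every Borel set, hence $\mu \star \nu = \nu$ as Borel probability measures. There is no real obstacle in this argument: the proposition is essentially a tautological reformulation of $\mu$-stationarity via test integrals, and the only point deserving minimal care is the Fubini bookkeeping, which is immediate from the compactness of $\supp(\mu)$.
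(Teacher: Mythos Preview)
Your argument is correct and follows exactly the route taken in the paper, which simply records the duality identity $\int_{\partial X} Q_\mu f \, d\nu = \int_{\partial X} f \, d(\mu\star\nu)$ in the display preceding the statement and declares the proposition an immediate consequence. One minor slip: an $L^1$ function need not be bounded, so your Fubini justification should instead invoke Tonelli on $|f|$ (or first treat bounded $f$ and pass to the limit), but this does not affect the substance of the proof.
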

	
	We also have the following identity
	\begin{align*}
		(Q_\mu^n f)(\xi) &= \int_G  f(g^{-1}\xi) d\mu^n(g) \\
		& = \int_G \int_G  f(g_{n-1}^{-1}g^{-1}\xi) d\mu^
		{n-1}(g)d\mu(g_{n-1}) \\
		& = \int_G Q_{\mu^{n-1}}f(g_{n-1}^{-1}\xi) d\mu(g_{n-1}) \\
		& =(Q_{\mu} (Q_{\mu^{n-1}} f)) (\xi),
	\end{align*}
	in other words, for every $n\in \mathbb{N}$, $Q_{\mu^n} = Q_\mu^n$.
	
	Given $\mu \in \Prob_c(G)$ and $0< \alpha< 1$ define the average Hölder constant of $\mu$ as
	\begin{equation*}
		k_\alpha^n(\mu) := \sup_{\xi \neq \eta \in \partial X} \int_G \left( \frac{D_b(g^{-1}\xi \, , \, g^{-1}\eta )}{D_b(\xi\, , \, \eta)} \right)^ \alpha d\mu^n(g).
	\end{equation*}
	\begin{remark}
		Notice that the supremum is taken over $\partial X$ where $D_b= \bar{D}_b$.
	\end{remark}
	The relevance of $k_\alpha^n(\mu)$ becomes evident in the following lemma where we relate it with the contracting behaviour of the Markov operator of $\mu$.
	
	\begin{lemma}
		\label{lemmaContraction}
		For every $f\in \mathcal{H}_\alpha(\partial X)$
		\begin{equation*}
			\upsilon_\alpha(Q_{\mu^n}f) \leq k_\alpha^n(\mu) \upsilon_\alpha(f).
		\end{equation*}
	\end{lemma}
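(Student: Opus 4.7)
The proof is essentially a direct computation that exploits the linearity of $Q_{\mu^n}$ and the definition of $\upsilon_\alpha$. The plan is to unwind the two definitions on the left-hand side, pull the absolute value inside the integral, and then bound the Hölder quotient on $\partial X$ uniformly in the distorted pair $(g^{-1}\xi, g^{-1}\eta)$.

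Concretely, I would begin by fixing $\xi \neq \eta$ in $\partial X$ and writing
\begin{equation*}
\bigl|(Q_{\mu^n}f)(\xi) - (Q_{\mu^n}f)(\eta)\bigr|
= \left| \int_G \bigl[f(g^{-1}\xi) - f(g^{-1}\eta)\bigr]\, d\mu^n(g) \right|
\leq \int_G |f(g^{-1}\xi) - f(g^{-1}\eta)|\, d\mu^n(g).
\end{equation*}
Since $g$ acts on $\partial X$ (so $g^{-1}\xi, g^{-1}\eta \in \partial X$), the definition of $\upsilon_\alpha(f)$ yields the pointwise bound $|f(g^{-1}\xi) - f(g^{-1}\eta)| \leq \upsilon_\alpha(f)\, D_b(g^{-1}\xi, g^{-1}\eta)^\alpha$, and integrating gives
\begin{equation*}
\bigl|(Q_{\mu^n}f)(\xi) - (Q_{\mu^n}f)(\eta)\bigr|
\leq \upsilon_\alpha(f) \int_G D_b(g^{-1}\xi, g^{-1}\eta)^\alpha\, d\mu^n(g).
\end{equation*}
Dividing both sides by $D_b(\xi,\eta)^\alpha > 0$, pulling the (constant-in-$g$) denominator inside the integral, and taking the supremum over $\xi \neq \eta \in \partial X$ yields $\upsilon_\alpha(Q_{\mu^n}f) \leq k_\alpha^n(\mu)\, \upsilon_\alpha(f)$, which is the desired inequality.

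I do not anticipate any genuine obstacle: the only mild point is to note that the supremum in the definition of $k_\alpha^n$ is taken on $\partial X$, where $D_b$ and $\bar D_b$ coincide (as recorded in the remark), so no case analysis on the minimum defining $D_b$ is required. Measurability of the integrand in $g$ follows from continuity of the action of $G$ on $\bord X$ and of $f$-integrability from $f \in L^\infty(\partial X)$ together with $\mu^n$ being a compactly supported probability, so the integrals above are finite and the manipulations legal.
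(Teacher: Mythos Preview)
Your proposal is correct and follows essentially the same steps as the paper's proof: bound the difference $|(Q_{\mu^n}f)(\xi)-(Q_{\mu^n}f)(\eta)|$ by pulling the absolute value inside the integral, apply the definition of $\upsilon_\alpha(f)$ pointwise, then divide by $D_b(\xi,\eta)^\alpha$ and take the supremum. The paper's argument is line-for-line the same, so there is nothing to add.
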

	
	\begin{proof}
		Given $f\in \mathcal{H}_\alpha(\partial X)$ and $\xi \neq \eta$
		\begin{align*}
			|(Q_{\mu^n} f)(\xi)-(Q_{\mu^n} f)(\eta)| & \leq \int_G \left| f(g^{-1}\xi) - f(g^{-1}\eta)  \right| d\mu^n(g) \\
			& \leq \upsilon_\alpha
			(f) \int_G D_b(g^{-1}\xi, g^{-1}\eta)^\alpha d\mu^n(g) \\
			& \leq \upsilon_\alpha(f) k_\alpha^n(\mu) D_b(\xi, \eta)^\alpha.
		\end{align*}
		Passing $D_b(\xi, \eta)^\alpha$ to the left side and then taking the supremum over $\xi \neq \eta \in \partial X$ yields the result.
	\end{proof}
	
	In particular, the previous Lemma implies that the Markov operator restricts to a well defined operator in $Q_\mu : \mathcal{H}_\alpha(\partial X) \to \mathcal{H}_\alpha(\partial X)$. In the following Lemma we prove $k_\alpha^n(\mu)$ is submultiplicative, which emphasises the spectral character of the measurement $k_n^\alpha$. 
	
	\begin{lemma}
		\label{submultiplicative}
		Let $\mu \in \Prob_c(G)$, for every $m,n \in \mathbb{N}$
		\begin{equation*}
			k_\alpha^{m+n}(\mu) \leq k_\alpha^{m}(\mu)k_\alpha^{n}(\mu).
		\end{equation*}
	\end{lemma}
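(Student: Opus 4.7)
The plan is to exploit the convolution structure $\mu^{m+n}=\mu^m\star \mu^n$ together with the cocycle-type telescoping of the ratio $D_b(g^{-1}\xi,g^{-1}\eta)/D_b(\xi,\eta)$ under the factorization $g = g_1 g_2$. Because the paper works with the right action (so that $\omega^{m+n} = \omega^m\cdot (T^m\omega)^n$), the law of $g$ sampled from $\mu^{m+n}$ can be realized as the product $g_1 g_2$ where $g_1\sim\mu^m$ and $g_2\sim\mu^n$ are independent, and hence
\begin{equation*}
\int_G F(g)\,d\mu^{m+n}(g) = \int_G \int_G F(g_1 g_2)\,d\mu^m(g_1)\,d\mu^n(g_2).
\end{equation*}

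The key observation is that $g^{-1} = g_2^{-1}g_1^{-1}$, so setting $\xi' := g_1^{-1}\xi$ and $\eta' := g_1^{-1}\eta$ (which are still distinct points of $\partial X$ since $g_1$ is an isometry), one can split the ratio multiplicatively:
\begin{equation*}
\frac{D_b(g^{-1}\xi,\,g^{-1}\eta)^\alpha}{D_b(\xi,\eta)^\alpha}
= \frac{D_b(g_2^{-1}\xi',\,g_2^{-1}\eta')^\alpha}{D_b(\xi',\eta')^\alpha}\cdot \frac{D_b(g_1^{-1}\xi,\,g_1^{-1}\eta)^\alpha}{D_b(\xi,\eta)^\alpha}.
\end{equation*}
I would then integrate against $d\mu^n(g_2)$ first, holding $g_1$ (and hence $\xi',\eta'$) fixed. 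By the very definition of $k_\alpha^n(\mu)$ as a supremum over pairs of distinct boundary points, this inner integral is bounded by $k_\alpha^n(\mu)$ uniformly in $\xi',\eta'$. What remains is
\begin{equation*}
\int_G \frac{D_b(g_1^{-1}\xi,\,g_1^{-1}\eta)^\alpha}{D_b(\xi,\eta)^\alpha}\,d\mu^m(g_1) \leq k_\alpha^m(\mu),
\end{equation*}
again by definition, and combining these two bounds and taking the supremum over $\xi\neq \eta$ yields the claim.

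There is no real obstacle here beyond bookkeeping, but the one point that warrants a moment of care is the order of multiplication in the convolution: one must verify that under the right-action convention used throughout the paper, $\mu^{m+n}$ is indeed the distribution of the product $g_1 g_2$ in the order in which $g^{-1}$ factors as $g_2^{-1}g_1^{-1}$. Once this is checked (directly from $\omega^{m+n}=\omega^m(T^m\omega)^n$), the decomposition of the ratio above goes through and the submultiplicativity is immediate.
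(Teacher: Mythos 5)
Your proof is correct and follows essentially the same route as the paper: factor $g^{-1}=g_2^{-1}g_1^{-1}$, split the ratio telescopically as a product of two ratios, integrate the inner factor first and bound it uniformly by the supremum defining $k_\alpha$, then integrate the outer factor. Your extra care about the ordering in $\mu^{m+n}=\mu^m\star\mu^n$ is sound but ultimately immaterial here since $\mu^m\star\mu^n=\mu^n\star\mu^m=\mu^{m+n}$, so either factorization works.
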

	
	\begin{proof}
		For every $\xi$, $\eta$ in $\partial X$
		\begin{align*}
			\int_G \bigg( \frac{D_b(g^{-1}\xi \, , \, g^{-1}\eta )}{D_b(\xi\, , \, \eta)} & \bigg)^ \alpha  d\mu^{m+n}(g) \leq \int_{G\times G} \left( \frac{D_b(g_2^{-1}g_1^{-1}\xi \, , \, g_2^{-1}g_1^{-1}\eta )}{D_b(\xi\, , \, \eta)} \right)^ \alpha d\mu^{m}(g_2)d\mu^n(g_1) \\
			& \leq \int_G \left( \frac{D_b(g_2^{-1}g_1^{-1}\xi \, , \, g_2^{-1}g_1^{-1}\eta )}{D_b(g_1^{-1}\xi\, , \, g_1^{-1}\eta)} \right)^ \alpha \left( \frac{D_b(g_1^{-1}\xi \, , \, g_1^{-1}\eta )}{D_b(\xi\, , \, \eta)} \right)^ \alpha d\mu^{m}(g_2)d\mu^n(g_1) \\
			& \leq \int_G \left( \frac{D_b(g_1^{-1}\xi \, , \, g_1^{-1}\eta )}{D_b(\xi\, , \, \eta)} \right)^ \alpha d\mu^n(g_1) \sup_{\xi \neq \eta \in \partial X} \int_G \left( \frac{D_b(g_2^{-1}\xi \, , \, g_2^{-1}\eta )}{D_b(\xi\, , \, \eta)} \right)^ \alpha  d\mu^{m}(g_2).
		\end{align*}
		Taking the supremum over $\xi$ and $\eta$ yields the result.
	\end{proof}
	
	Before we proceed with the following Proposition, be wary that the Wasserstein distance can also be defined in $\Prob(\partial X)$ as	
	\begin{equation*}
		W_\alpha(\nu_1, \nu_2) := \sup_{ f\in \mathcal{H}_\alpha(\partial X),\, \upsilon_\alpha(f)\leq 1} \left|\int_{\partial X} fd\nu_1 - \int_{\partial X} fd\nu_2 \right|.
	\end{equation*}
	
	\begin{proposition}
		\label{uniquenessProp}
		Let $\mu \in \Prob_c(G)$. If for some $n\in \mathbb{N}$ and $0<\alpha \leq 1$
		\begin{equation*}
			k_\alpha^n(\mu)^{1/n}  < 1,
		\end{equation*} 
		then there exists a unique $\mu$-stationary measure $\nu \in \Prob(\partial X)$. Moreover, for every $f\in \mathcal{H}_\alpha(\partial X)$, 
		\begin{equation*}
			\lim_{n\to \infty} Q_\mu^n(f) = \left( \int_{\partial X} fd\nu_\mu\right) \mathbf{1}.
		\end{equation*}
		
	\end{proposition}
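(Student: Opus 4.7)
The plan is to exploit the contraction rate $k_\alpha^n(\mu) \to 0$ obtained from the hypothesis to turn $Q_\mu$ into a genuine contraction on the Hölder seminorm of $\mathcal{H}_\alpha(\partial X)$, and recover the stationary measure by observing that the orbit of a Dirac mass is Cauchy in Wasserstein distance. The first step is to upgrade the hypothesis to $k_\alpha^n(\mu) \to 0$: writing $n = qn_0 + s$ with $0 \leq s < n_0$, submultiplicativity (Lemma \ref{submultiplicative}) gives $k_\alpha^n(\mu) \leq k_\alpha^{n_0}(\mu)^q\, k_\alpha^s(\mu)$, and Proposition \ref{VisualMetric} together with compactness of $\supp\mu$ bound $k_\alpha^1(\mu)$ (hence all $k_\alpha^s$ with $s < n_0$) by something of the order $C(\delta)^\alpha \int b^{\alpha d(gx_0,x_0)}\, d\mu(g)$, so the decay is geometric.

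Second, Lemma \ref{lemmaContraction} gives $\upsilon_\alpha(Q_\mu^n f) \leq k_\alpha^n(\mu)\, \upsilon_\alpha(f) \to 0$ for every $f \in \mathcal{H}_\alpha(\partial X)$, and since $\partial X$ has $D_b$-diameter at most $1$, the oscillation of $Q_\mu^n f$ tends to zero. Setting $c_n := \inf Q_\mu^n f$, both $\|Q_\mu^n f - c_n \mathbf{1}\|_\infty$ and $|c_{n+1} - c_n|$ decay geometrically in $n$, producing a limit $\Lambda(f) \in \mathbb{R}$ with $Q_\mu^n f \to \Lambda(f)\mathbf{1}$ uniformly. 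In parallel, the same estimate produces
\[
W_\alpha(\mu^{n+k}\star\delta_\xi,\, \mu^n \star \delta_\xi) \leq k_\alpha^n(\mu)\, \mathrm{diam}(\partial X)^\alpha,
\]
so $(\mu^n \star \delta_\xi)_n$ is Cauchy in $W_\alpha$; completeness of the Wasserstein space over the Polish metric space $(\partial X, D_b)$ yields a probability measure $\nu_\mu$ with $\int f\, d\nu_\mu = \Lambda(f)$, and passing to the limit in $\mu \star (\mu^n \star \delta_\xi) = \mu^{n+1}\star \delta_\xi$ shows $\mu \star \nu_\mu = \nu_\mu$.

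Uniqueness is then immediate: if $\nu'$ is any $\mu$-stationary measure, Proposition \ref{statprop} gives $\int f\, d\nu' = \int Q_\mu^n f\, d\nu'$, which converges to $\Lambda(f) = \int f\, d\nu_\mu$ for every $f \in \mathcal{H}_\alpha(\partial X)$; since Hölder functions determine probability measures on the Polish space $\partial X$, we conclude $\nu' = \nu_\mu$. The main obstacle I anticipate is the non-compactness of $\partial X$ (since $X$ is not assumed proper): the classical Krylov–Bogoliubov approach on the compact $X^h$ would produce a stationary measure, but transferring it to $\partial X$ via the local minimum map is somewhat indirect, whereas the Wasserstein-completeness route works directly on $\partial X$ — provided one carefully justifies that $(\partial X, D_b)$ is complete (and hence Polish), which follows from completeness of $D_b$ on $\bord X$ together with the visual estimate $\bar D_b(\xi,\eta) \geq \rho_b(\xi,\eta)/4$ on the boundary.
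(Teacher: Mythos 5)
Your proof is correct but takes a genuinely different route from the paper's. The paper works in the quotient Banach algebra $\mathcal{H}_\alpha(\partial X)/\mathbb{C}\mathbf{1}$, invokes the spectral decomposition for a contraction to obtain an invariant complement $H_0$ with $\mathcal{H}_\alpha(\partial X) = H_0\oplus\mathbb{C}\mathbf{1}$, reads off the limit functional $\Lambda$ as the projection onto $\mathbb{C}\mathbf{1}$, establishes positivity and uniform-norm continuity of $\Lambda$ from positivity of $Q_\mu$, and then applies the Riesz--Kakutani--Markov representation theorem for non-compact spaces to produce $\nu_\mu$ from $\Lambda$. You instead build $\Lambda(f)$ by hand from the geometric decay of $\upsilon_\alpha(Q_\mu^n f)$ together with $\mathrm{diam}_{D_b}(\partial X)\leq 1$, and realize $\nu_\mu$ as the $W_\alpha$-limit of $\mu^n\star\delta_\xi$, relying on completeness of the Wasserstein space over $(\partial X, D_b^\alpha)$. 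Your approach is more elementary and constructive (no spectral theory, no non-compact Riesz theorem), and it exhibits $\nu_\mu$ concretely as the distributional limit of $\omega^{-n}\xi$. The trade-off is the need to verify that $(\partial X, D_b)$ is complete Polish. Your stated justification — completeness of $D_b$ on $\bord X$ plus the visibility bound on $\partial X\times\partial X$ — is slightly misdirected: that bound alone does not preclude a $D_b$-Cauchy sequence in $\partial X$ from converging to a point of $X$. What one actually uses is that the Frink estimate $\rho_b/4\leq\bar D_b\leq\rho_b$ holds on all of $\bord X$ (the relaxed triangle inequality for $\rho_b$ is verified on all of $\bord X$ in the paper), combined with $\langle\xi,z\rangle_{x_0}\leq d(z,x_0)$ for $\xi\in\partial X$ and $z\in X$, so that $\bar D_b(\xi,z)\geq b^{-d(z,x_0)}/4>0$; this shows $\partial X$ is closed in $\bord X$ and hence complete. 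With that repair the argument is sound, and the uniqueness step (integrating $Q_\mu^n f\to\Lambda(f)\mathbf{1}$ against any stationary $\nu'$ and using that $\mathcal{H}_\alpha(\partial X)$ is dense in $C_b(\partial X)$, which determines Borel probability measures on a Polish space) matches the paper's.
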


	\begin{proof}
	    The seminorms $\upsilon_\alpha$ are norms in the space $\mathcal{H}_\alpha(\Gamma) / \mathbb{C}\mathbf{1}$. Since $Q_g^n \mathbf{1}=\mathbf{1}$, by hypothesis, $Q_g^n$ acts in $\mathcal{H}_\alpha(\Gamma) / \mathbb{C}\mathbf{1}$ as a contraction. Using spectral theory (see chapter IX in \cite{riesz2012functional} for example), there exists and invariant space $H_0$, isomorphic to $\mathcal{H}_\alpha(\Gamma) / \mathbb{C}\mathbf{1}$, such that $\mathcal{H}_\alpha(\Gamma) = H_0 \oplus \mathbb{C}\mathbf{1}$. Given $f\in \mathcal{H}_\alpha(\Gamma)$ we may write it as $c\mathbf{1} + h$ where $c\in \mathbb{C}$ and $h\in H_0$. With that in mind, define
		\begin{align*}
			\Lambda : \mathcal{H}_\alpha(\partial X) & \to \mathbb{C} \\
			c\mathbf{1}+h \mapsto c.
		\end{align*}
		Now notice that $Q_\mu$ is a positive operator, therefore so is $\Lambda$ as
		\begin{equation*}
			c\mathbf{1}=\lim_{n\to \infty}\left(c\mathbf{1} + Q_\mu^n(h)\right)= \lim_{n \to \infty} Q_\mu^n(f) \geq 0,
		\end{equation*}
		provided $f\geq 0$. Hence $c=\Lambda(f)\geq 0$. Positivity also implies continuity with respect to the uniform norm as
		\begin{equation*}
			|\Lambda(\varphi)| \leq |\Lambda(||\varphi||_\infty \mathbf{1})|| = ||\varphi||_\infty.
		\end{equation*}
		
		Now since $\partial X$ is a metric space, the set of bounded Lipschitz functions in $\partial X$ is dense in the space of bounded uniformly continuous functions $C_b(\partial X)$. With effect, given $f\in C_b(\partial X)$ one can take the functions
		\begin{equation*}
			f_n(\xi) = \inf_{\eta\in \partial X} \{f(\eta)-nD_b(\xi, \eta)\},
		\end{equation*}
		which are all bounded Lipschitz and uniformly converge to $f$. Since the space is bounded, the set of Lipschitz functions is contained in the space of Hölder functions, so $\mathcal{H}_\alpha(\partial X)$ is dense in $C_b(\partial X)$. Hence, $\Lambda$ extends to a positive linear continuous functional $\hat{\Lambda}:C_b(\partial X) \mapsto \mathbb{C}$. 
		
		Riesz-Kakutani-Markov for non-compact spaces (Theorem 1.3 in \cite{sentilles1972bounded}) applies, so there exists a measure $\nu\in \Prob(\partial X)$ such that $\hat{\Lambda}(f)=\int_{\partial X} f d\nu$ for every $f\in C_b(\partial X)$. Finally, writing $f$ once again as $c\mathbf{1} + h$ yields 
		\begin{equation*}
			\int_{\partial X} Q_\mu f d\nu = \hat{\Lambda}(Q_\mu f) = c = \hat{\Lambda}(f)=\int_{\partial X} f d\nu.
		\end{equation*}
		By yet another density argument, this holds for all $f\in L^1(\partial X)$, therefore $\nu$ is $\mu$-stationary. This density of $C_b(\partial X)$ in $L^1(\partial X)$ also justifies the uniqueness of the measure satisfying $\hat{\Lambda}(f)=\int_{\partial X} f d\nu$.
	\end{proof}

	We will now focus on proving $k_\alpha^n(\mu_1)<1$ in a neighbourhood of $\mu$, provided $\mu$ is irreducible and $\ell(\mu)>0$. 
	
	\begin{lemma}
		\label{unifConv}
		Let $\mu \in \Prob_c(G)$ be irreducible and $\ell(\mu)>0$,  then
		\begin{equation*}
			\lim_{n\to \infty} \frac{1}{n} \int_G  h(gx_0) d\mu^n(g)= \ell(\mu)
		\end{equation*}
		uniformly on $h \in  X_\infty^h$.
	\end{lemma}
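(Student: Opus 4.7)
The plan has three stages: pointwise convergence via HMET and dominated convergence; non-atomicity of the associated $\mu$-stationary measure on $X^h$ via irreducibility; and a geometric upgrade to uniform convergence via Lemma \ref{comparisonLemma}.

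First, for each fixed $h \in X_\infty^h$, the HMET applied $\omega$-wise yields $\frac{1}{n} h(\omega^n x_0) \to \ell(\mu)$ whenever $h \neq h_\omega$ and $\to -\ell(\mu)$ when $h = h_\omega$. Since $|h(\omega^n x_0)| \leq d(\omega^n x_0, x_0) \leq n C_0$ with $C_0 := \sup_{g \in \supp \mu} d(gx_0, x_0) < \infty$, dominated convergence yields
\begin{equation*}
\frac{1}{n} \int_G h(gx_0)\, d\mu^n(g) \longrightarrow \ell(\mu)\bigl(1 - 2\nu(\{h\})\bigr),
\end{equation*}
where $\nu$ is the push-forward of $\mu^{\mathbb{N}}$ under $\omega \mapsto h_\omega$, a $\mu$-stationary probability on $X^h$ supported in $X_\infty^h$ (since $\frac{1}{n}h_\omega(\omega^n x_0) \to -\ell(\mu) < 0$ forces $h_\omega$ to be unbounded below).

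Second, I would show $\nu$ has no atoms. Let $m^* := \sup_h \nu(\{h\})$; since $\sum_h \nu(\{h\}) \leq 1$, the supremum is attained on a finite set $E := \{h : \nu(\{h\}) = m^*\}$ with $|E| \leq 1/m^*$. Stationarity gives $m^* = \int \nu(\{g^{-1} h\})\, d\mu(g) \leq m^*$, forcing equality $\mu$-a.e., so $g^{-1}$ preserves $E$ for $\mu$-a.e.\ $g$, hence permutes it. If $|E| = 1$, its unique element is $\mu$-a.s.\ invariant, directly contradicting irreducibility. If $|E| \geq 2$, the action on $E$ factors through the finite group $\mathrm{Sym}(E)$, and one rules this out by combining irreducibility with the equivariance $h_{T\omega} = g_0 \cdot h_\omega$ and the positive drift: a finite $\mu$-a.s.\ invariant family of horofunctions would make $\omega \mapsto h_\omega$ cycle deterministically through $E$ on a set of positive measure, ultimately forcing some element of $E$ to be $\mu$-a.s.\ fixed via a pigeonhole argument on a suitable convolution power, contradicting irreducibility.

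Third, with $\nu$ atomless the pointwise limit is $\ell(\mu)$ for every $h \in X_\infty^h$. The upper bound $\frac{1}{n} \int h(gx_0)\, d\mu^n \leq \frac{1}{n} \int d(gx_0, x_0)\, d\mu^n \to \ell(\mu)$ is immediate and uniform. For the uniform lower bound, write $h = h_\zeta$ and use the identity $h_\zeta(gx_0) = d(gx_0, x_0) - 2\langle gx_0, \zeta\rangle_{x_0}$; it is then enough to show $\sup_\zeta \frac{1}{n} \int \langle gx_0, \zeta\rangle_{x_0}\, d\mu^n(g) \to 0$. Since $\bar\nu := \phi_* \nu$ is a non-atomic Radon probability on the bounded metric space $(\partial X, D_b)$, the standard tightness argument gives $\sup_\zeta \bar\nu(\{\xi : \langle \xi, \zeta\rangle_{x_0} \geq T\}) \to 0$ as $T \to \infty$. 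Combined with the $\mu^{\mathbb{N}}$-a.s.\ convergence $\omega^n x_0 \to \phi(h_\omega)$ in the visual metric and Gromov's 4-point inequality (which yields $\langle \omega^n x_0, \zeta\rangle_{x_0} \leq \langle \phi(h_\omega), \zeta\rangle_{x_0} + \delta$ for $n$ large), this controls the Gromov product integral uniformly in $\zeta$, invoking Lemma \ref{comparisonLemma} to split the trivial contribution from the event where the trajectory is atypical.

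The main obstacle is the no-atoms step: the case $|E| \geq 2$ requires both irreducibility and $\ell(\mu) > 0$ in an essential way, and is the only point where these hypotheses enter nontrivially. The uniformity in the third step is also delicate and depends crucially on the non-atomicity being quantified in the visual metric via tightness.
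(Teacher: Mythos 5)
Your overall route parallels the paper's — apply the HMET, eliminate the exceptional horofunction via irreducibility, and then upgrade pointwise to uniform convergence by a compactness-type argument — but it diverges at exactly the two places where the real work has to happen, and in each case there is a gap.

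\textbf{The atom-killing step.} Your reduction to a finite $\supp\mu$-invariant set $E$ of maximal atoms is correct, as is the observation that $|E|=1$ immediately contradicts irreducibility. But for $|E|\geq 2$ the sketch — ``a pigeonhole argument on a suitable convolution power'' — does not close: even if you arranged that every $g$ in $\supp\mu^{N}$ (say $N=|E|!$) fixes each element of $E$, this contradicts irreducibility of $\mu^{N}$, not of $\mu$, and irreducibility is not inherited by convolution powers (take $\mu$ uniform on $\{g,g^{-1}\}$ for a hyperbolic $g$: $\mu^{2}$ charges the identity, which fixes every horofunction). What actually rules out $|E|\geq 2$ is the positive-drift hypothesis: since every $g\in\supp\mu^{n}$ permutes $E$, the pairwise Gromov products $\langle\xi_i,\xi_j\rangle_{gx_0}$ are a permutation of $\langle\xi_i,\xi_j\rangle_{x_0}$, so the sums $h_{\xi_i}(gx_0)+h_{\xi_j}(gx_0)$ are uniformly bounded; for $|E|\geq 3$ this pins down each $h_{\xi_i}(gx_0)$ and, via Lemma~\ref{comparisonLemma}, forces $d(gx_0,x_0)$ to be bounded on $\bigcup_n\supp\mu^{n}$, giving $\ell(\mu)=0$; for $|E|=2$ the bound degenerates and one must analyse the difference cocycle $\Delta(g)=h_{\xi_1}(gx_0)-h_{\xi_2}(gx_0)$ over the sign representation $\supp\mu\to\{\pm1\}$ and show its Birkhoff averages vanish. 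None of this is in your sketch. (For fairness: the paper's own proof of this lemma passes rather quickly from ``$S=\emptyset$'' — i.e.\ no horofunction carries full $\nu$-mass — to ``$\nu$ is non-atomic,'' which is the same quantitative jump.)

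\textbf{The uniformity step.} You assert that non-atomicity of $\bar\nu=\phi_*\nu$ yields the uniform bound $\sup_\zeta\bar\nu\bigl(\{\xi:\langle\xi,\zeta\rangle_{x_0}\geq T\}\bigr)\to 0$. This is false in general on a non-compact metric space: non-atomicity gives $\bar\nu(B(\zeta,r))\to 0$ as $r\to 0$ for each fixed $\zeta$, but uniformity over $\zeta$ is strictly stronger and fails if $\bar\nu$ concentrates on infinitely many shrinking clumps escaping to the non-compact part of $\partial X$. The paper sidesteps this by running the compactness argument in the compact space $X^h$ rather than in $\partial X$: given a putative sequence $h_n$ violating uniformity, it extracts a pointwise limit $h\in X^h$ and shows $\langle\xi_n,\omega^n x_0\rangle_{x_0}$ is almost surely bounded in $n$ (else $h\in X_-^h(\omega)$), so the defect terms $h_n(\omega^n x_0)-d(\omega^n x_0,x_0)$ integrate to zero by dominated convergence. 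You should adopt that argument; the uniform Hölder regularity of $\bar\nu$ that you want is morally a consequence of Proposition~\ref{Contract1}, which itself depends on the present lemma, so it is not available here.
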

	
	\begin{proof}
		Consider the Bernoulli shift $T:\Omega \to \Omega$, where $\Omega = G^\mathbb{N}$. By the HMET (Theorem \ref{hmet}), for almost every $\omega \in \Omega$ there exists a filtration of $  X_-^h(\omega)\subset X_+^h(\omega) = X^h$ such that for every $h\in X_+^h(\omega)$
		\begin{equation}
			\label{posHoro}
			\lim_{n\to \infty} \frac{1}{n} h(\omega^n x_0) = \ell(\mu),
		\end{equation}
		and for every $h \in X_-^h(\omega)$
		\begin{equation*}
			\lim_{n\to \infty} \frac{1}{n} h(\omega^n x_0) = -\ell(\mu).
		\end{equation*}
		
		Consider the set
		\begin{equation*}
			S:=\{ h \in X_\infty^h \, : \, h\in X_-^h(\omega), \, \mu^\mathbb{N} \textrm{-almost surely} \}.
		\end{equation*}
		The existence and measurability of $S$ is discussed in more detail in the last section of the paper. Notice that $g_0\cdot X_-^h(\omega) = X_-^h(T\omega)$ for every $ \omega = (g_0,g_1,...) \in \Omega$, so one obtains that $g_0S = S$ for every $g_0 \in \supp \mu$. Moreover $S$ consists at most of one point. However $\mu$ is irreducible, so $S$ must be empty, hence (\ref{posHoro}) holds for every $h\in X_\infty^h$,  $\mu^\mathbb{N}$-almost surely. Since $h(\omega^n x_0)$ are uniformly bounded by $d(\omega^n x_0, x_0)$, by the dominated convergence theorem one has
		\begin{equation}
			\label{convergence}
			\lim_{n\to \infty} \frac{1}{n}\int_G  h(gx_0) d\mu^n(g) = \ell(\mu)
		\end{equation}
		pointwise in $ h \in X_\infty^h$.
		
		Let us now prove the uniformity in $h$. Using an absurd argument, suppose there is a sequence of horofunctions $(h_n)$ in $X_\infty^h$ and $\varepsilon>0$ such that
		\begin{equation*}
			\lim_{n\to \infty} \frac{1}{n}\int_G  h_n(gx_0) d\mu^n(g) < \ell(\mu)- \varepsilon
		\end{equation*}
		for every large $n$.
		Due to the compactness of $X^h$ we can assume that $h_n$ converges to some $h$. Take $(y_m^n)_m$ a family of Gromov sequences such that $h_{y_m^n} \to h_n$ as $m\to \infty$. In particular each $y_m^n$ converges to some $\xi_n\in \partial X$ such that $h_n = h_{\xi_n}$. Then,
		\begin{align*}
			\lim_{n\to \infty} h_n(\omega^nx_0) - d(\omega^nx_0\, , \, x_0)  & = \lim_{n\to \infty} \lim_{m \to \infty} h_{y_m^n}(\omega^nx_0) - d(\omega^nx_0\, , \, x_0)\\
			& = \lim_{n\to \infty} \lim_{m \to \infty} d(y_m^n\, , \, \omega^nx_0) - d(y_m^n\, , \, x_0) - d(\omega^nx_0\, , \, x_0) \\
			& = \lim_{n\to \infty} \lim_{m \to \infty} - 2\langle y_m^n, \omega^nx_0 \rangle_{x_0} \\
			& \geq \lim_{n \to \infty} - 2\langle \xi_n, \omega^nx_0 \rangle_{x_0} - 2 \delta.
		\end{align*}
		The quantity $\langle \xi_n, \omega^nx_0 \rangle_{x_0}$ goes to infinity if and only if both $\xi_n$ and $\omega^nx_0$ converge to the same point in $\partial X$. This would imply $h\in X_\infty^h$ and, by Proposition \ref{horoProduct3} , $\lim_{n \to \infty}h(\omega^nx_0) = -\infty$, hence $h\in X_-^h(\omega)$. Therefore $\langle \xi_n, \omega^nx_0 \rangle_{x_0}$ must $\mu^\mathbb{N}$-almost surely be finite as otherwise $h\in S=\emptyset$. Using dominated convergence theorem again,
		\begin{align*}
			\lim_{n\to \infty} \frac{1}{n}\int_G h_n(gx_0)d\mu^n(g) & = \lim_{n\to \infty} \frac{1}{n}\int_G  d(gx_0\, , \, x_0)d\mu^n(g) + \lim_{n\to \infty} \frac{1}{n}\int_G  h_n(gx_0)-d(gx_0\, , \, x_0)d\mu^n(g) \\
			& = \ell(\mu) + 0 = \ell(\mu),
		\end{align*}
		which yields the absurd.
	\end{proof}
	
%
%

	Applying Proposition \ref{VisualMetric} one now has the inequality
	\begin{equation*}
		k_\alpha^n(\mu) \leq C(\delta)^\alpha\sup_{h\in X^h} \int_G b^{-\alpha h(g{x_0})} d\mu^n(g).
	\end{equation*}
	Since for every horofunction $h\in X^h$ and $g_1, g_2\in G$
	\begin{align*}
		h(g_1g_2x_0) & = \left(h(g_1g_2x_0) - h(g_1x_0) \right) + \left(h(g_1x_0) - h(x_0) \right)\\
		& = g_1^{-1}\cdot h(g_2x_0) + h(g_1x_0),
	\end{align*} 
	one can easily verify that the process $ \sup_{h\in X^h} \int_G b^{-\alpha h(gx_0)} d\mu^n(g)$ is submultiplicative. This fact is relevant to us since it allows us to pass from the spectral quantity $k_\alpha^n(\mu)$ to a more manageable one at the loss of a multiplicative constant. With that in mind we now take a closer look at the quantity on the right, in particular, next lemma tells us that $g \mapsto b^{-\alpha h(gx_0)}$ is Hölder continuous. 
	
	\begin{lemma}
		\label{isHolder}
		Given $\lambda>0$, for every $g_1, g_2 \in G_\lambda$ one has
		\begin{equation*}
			\left|b^{-\alpha h(g_1x_0)} - b^{-\alpha h(g_2x_0)}\right| \leq \lambda^{2\alpha}d_G(g_1, g_2)^\alpha.
		\end{equation*}
	\end{lemma}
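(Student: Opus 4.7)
The plan is to reduce the Hölder estimate to a Lipschitz one and then combine the $1$-Lipschitz property of horofunctions with the defining relationship between $d$ and $d_G$. First, the elementary concavity inequality $|u^\alpha - v^\alpha| \leq |u-v|^\alpha$, valid for $u,v\geq 0$ and $0<\alpha\leq 1$, applied to $u=b^{-h(g_1 x_0)}$ and $v=b^{-h(g_2 x_0)}$, reduces the claim to establishing
\[
|b^{-h(g_1 x_0)} - b^{-h(g_2 x_0)}| \leq \lambda^2\, d_G(g_1,g_2).
\]

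For this, I would use that every horofunction is $1$-Lipschitz with $h(x_0)=0$, so for $g\in G_\lambda$ one has $|h(gx_0)|\leq d(gx_0,x_0)<\log_b \lambda$, hence $b^{-h(gx_0)}\in(\lambda^{-1},\lambda)$. The mean value theorem applied to $t\mapsto b^{-t}$, whose derivative is bounded by $\lambda\log b$ in absolute value on the relevant range, yields
\[
|b^{-h(g_1 x_0)} - b^{-h(g_2 x_0)}| \leq \lambda\log(b)\, d(g_1 x_0, g_2 x_0).
\]

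The final step is to bound $\log(b)\, d(g_1 x_0, g_2 x_0)$ by $\lambda\, d_G(g_1,g_2)$. Evaluating the definition of $d_G$ at $\xi=x_0\in\bord X$ gives $d_G(g_1,g_2)\geq D_b(g_1 x_0, g_2 x_0) = \min\{\log(b)\, d(g_1 x_0, g_2 x_0),\,\bar{D}_b(g_1 x_0, g_2 x_0)\}$. If the minimum equals $\log(b) d$, the bound follows immediately from $\log(b) d \leq d_G \leq \lambda d_G$. The main obstacle is the complementary case, where $\bar{D}_b < \log(b) d$ so that $d$ is not directly controlled by $d_G$; here one uses that this regime forces $g_1 x_0$ and $g_2 x_0$ to lie in nearly the same Gromov direction from $x_0$, so that $\bar{D}_b$ reflects the quantity $b^{-\langle g_1 x_0, g_2 x_0\rangle_{x_0}}$, and combines this with the elementary calculus inequality $y\log b\leq b^y$ for $y\geq 0$, applied to $y=d(g_1 x_0, g_2 x_0)$, together with the bound $b^{d(g_i x_0, x_0)}\leq \lambda$, to absorb the excess $d$-distance into the factor $\lambda^2$.
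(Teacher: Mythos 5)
Your proposal follows the same route as the paper's proof: reduce via the $\alpha$-Hölder inequality $|u^\alpha-v^\alpha|\leq |u-v|^\alpha$, then apply the mean value theorem to $t\mapsto b^{-t}$ (using $|h(g_ix_0)|\leq d(g_ix_0,x_0)<\log_b\lambda$ for the derivative bound), and finally split on whether $D_b(g_1x_0,g_2x_0)$ is realized by $\log(b)\,d$ or by $\bar{D}_b$, handling the second case via a calculus inequality combined with the Gromov-product decomposition $d(g_1x_0,g_2x_0)=d(g_1x_0,x_0)+d(g_2x_0,x_0)-2\langle g_1x_0,g_2x_0\rangle_{x_0}$. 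The decomposition, the role of the $1$-Lipschitz property of horofunctions, and the appeal to the visibility relationship between $\bar{D}_b$ and $\rho_b$ are all the same as in the paper.

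The one substantive deviation is the choice of calculus inequality, and it costs you the stated constant. You invoke $y\log b\leq b^y$, which gives
\[
\log(b)\,d(g_1x_0,g_2x_0)\ \leq\ b^{d(g_1x_0,g_2x_0)}\ =\ b^{d(g_1x_0,x_0)}\,b^{d(g_2x_0,x_0)}\,\rho_b(g_1x_0,g_2x_0)^2\ <\ \lambda^2\,\rho_b^2.
\]
After absorbing $\rho_b^2\leq\rho_b$ (since $\rho_b\leq 1$) and comparing $\rho_b$ with $D_b$, this yields a bound of order $\lambda^2\,d_G$ for $\log(b)\,d$, not the $\lambda\,d_G$ you claim; combined with the factor $\lambda$ from the mean value theorem, the Hölder constant comes out as $\lambda^{3\alpha}$ rather than $\lambda^{2\alpha}$. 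The paper uses the sharper inequality $y\log b\leq b^{y/2}$ (still valid for all $y\geq 0$; the minimum of $b^{y/2}-y\log b$ is $2-2\log 2>0$), which splits the exponentials as $b^{d(g_ix_0,x_0)/2}\leq\lambda^{1/2}$ and produces a single copy of $\rho_b$, giving $\log(b)\,d\leq\lambda\rho_b$ and hence the factor $\lambda^{2\alpha}$ in the end. So the idea is right, but to reach the lemma as stated you need the $b^{y/2}$ version of the elementary inequality rather than $b^y$.
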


	\begin{proof}
		Start by noticing that $x\mapsto x^\alpha$ is $\alpha$-Hölder with Hölder constant $1$. Then we only need to control $|b^{- h(g_1x_0)} - b^{- h(g_2x_0)}|$, which we can do using the mean value theorem and the fact horofunctions are Lipschitz
		\begin{align*}
			\left|b^{- h(g_1x_0)} - b^{ -h(g_2x_0)}\right| & \leq \log(b)\lambda \left| h(g_1x_0) - h(g_2x_0)\right| \leq \lambda \log(b) d(g_1x_0, g_2x_0).
		\end{align*}
		If $D_b(g_1^{-1}x_0, g_2^{-1}x_0) = \log(b) d(g_1^{-1}x_0, g_2^{-1}x_0)$ we are done, otherwise and immediate computation yields
		\begin{align*}
			\log(b)	d(g_1x_0, g_2x_0) \leq b^{d(g_1x_0, g_2x_0)/2} & \leq b^{\left(d(g_1x_0, x_0)+d(g_2x_0, x_0)\right)/2}b^{-\langle g_1x_0 \, , \, g_2x_0 \rangle_{x_0}}\\
			& \leq \lambda D_b(g_1x_0, g_2x_0)
		\end{align*}
		so we are done.
	\end{proof}
	
	\begin{proposition}
		\label{Contract1}
		Given $\lambda>0$ let $\mu \in \Prob_c(G_\lambda)$ be an irreducible measure with $\ell(\mu)>0$. There are numbers $r>0$, $0<\alpha \leq 1$, $0<k<1$ and $n\in \mathbb{N}$ such that for every $\mu_1 \in \Prob_c(G_\lambda)$ satisfying $W_\alpha(\mu, \mu_1)<r$ one has $\;k_\alpha^n(\mu)\leq k$.
	\end{proposition}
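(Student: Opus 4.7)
The plan is to combine the uniform convergence given by Lemma \ref{unifConv} with a Taylor expansion in $\alpha$ around $0$, using the visual estimate from Proposition \ref{VisualMetric} to pass from the spectral quantity $k_\alpha^n$ to an exponential moment, and then transfer the resulting contraction from $\mu$ to nearby $\mu_1$ via the Hölder continuity provided by Lemma \ref{isHolder} and the convolution estimate for $W_\alpha$. The first step is to apply Proposition \ref{VisualMetric} with $g$ replaced by $g^{-1}$, raise to the power $\alpha$, and apply Cauchy--Schwarz in $\mu_1^n$ to split the geometric mean $b^{-\alpha [h_\xi(gx_0) + h_\eta(gx_0)]/2}$. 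Taking the supremum over $\xi \neq \eta \in \partial X$ (which only involves horofunctions in $X^h_\infty$ via the identification $\phi$) yields
\begin{equation*}
k_\alpha^n(\mu_1) \;\leq\; C(\delta)^\alpha \,\sup_{h \in X_\infty^h}\, \int_G b^{-\alpha h(gx_0)}\, d\mu_1^n(g),
\end{equation*}
so the task reduces to bounding the integral on the right by some $k_1 < C(\delta)^{-\alpha}$ uniformly in $h$ and in $\mu_1$ near $\mu$.

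Next I would establish the contraction for $\mu$ itself. Using Lemma \ref{unifConv}, fix $n$ large enough so that $\int_G h(gx_0)\, d\mu^n(g) \geq n\ell(\mu)/2$ for every $h \in X^h_\infty$, and in addition $n\log(b)\ell(\mu)/2 > 2\log C(\delta)$. Since $\supp \mu^n \subset G_{\lambda^n}$, for such $g$ we have $|h(gx_0)| \leq d(gx_0,x_0) \leq n\log_b\lambda$. Applying the quantitative bound $e^{-t} \leq 1 - t + \tfrac{1}{2} t^2 e^{|t|}$ with $t = \alpha \log(b) h(gx_0)$ gives
\begin{equation*}
\int_G b^{-\alpha h(gx_0)}\, d\mu^n(g) \;\leq\; 1 - \alpha \log(b)\,\int_G h(gx_0)\, d\mu^n(g) + \tfrac{1}{2}\,\alpha^2 n^2 \log^2(\lambda)\,\lambda^{\alpha n}.
\end{equation*}
Combining with $C(\delta)^\alpha = 1 + \alpha \log C(\delta) + O(\alpha^2)$, the linear terms in $\alpha$ give $1 - \alpha \bigl( n \log(b)\ell(\mu)/2 - \log C(\delta) \bigr)$, which is strictly less than $1$ by the choice of $n$; the quadratic remainder is then absorbed by shrinking $\alpha$. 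This produces $\alpha \in (0,1]$ and $k_0 \in (0,1)$ such that $C(\delta)^\alpha \sup_h \int_G b^{-\alpha h(gx_0)} d\mu^n(g) \leq k_0$.

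The perturbation step uses Lemma \ref{isHolder}, which says that for each fixed $h \in X^h_\infty$ the map $g \mapsto b^{-\alpha h(gx_0)}$ is $\alpha$-Hölder on $G_{\lambda^n}$ with Hölder constant at most $(\lambda^n)^{2\alpha} = \lambda^{2n\alpha}$ (since both $\supp \mu^n$ and $\supp \mu_1^n$ lie in $G_{\lambda^n}$). Therefore
\begin{equation*}
\Bigl| \int_G b^{-\alpha h(gx_0)}\, d\mu^n(g) - \int_G b^{-\alpha h(gx_0)}\, d\mu_1^n(g) \Bigr| \;\leq\; \lambda^{2n\alpha}\, W_\alpha(\mu^n, \mu_1^n),
\end{equation*}
and the preceding corollary on Wasserstein distance under convolution gives $W_\alpha(\mu^n,\mu_1^n) \leq W_\alpha(\mu,\mu_1)\sum_{i=0}^{n-1} C(\delta)^{i\alpha}\lambda^{i\alpha}$. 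Choosing $r > 0$ so small that $C(\delta)^\alpha \lambda^{2n\alpha} r \sum_{i=0}^{n-1} C(\delta)^{i\alpha}\lambda^{i\alpha} \leq (1-k_0)/2$ finishes the proof with $k := (1+k_0)/2$.

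The main obstacle is the first half of the argument: because $C(\delta) \geq 1$, one cannot hope to get $C(\delta)^\alpha \cdot (\text{something close to }1) < 1$ by shrinking $\alpha$ alone, since both factors then tend to $1$. The fix is the order of quantifiers — pick $n$ first so that the drift-driven linear term $\alpha n \log(b)\ell(\mu)/2$ strictly dominates the opposing $\alpha \log C(\delta)$ contributed by $C(\delta)^\alpha$, and only then shrink $\alpha$ to make the quadratic Taylor remainder negligible. Once the contraction holds for $\mu$ with a definite gap, the Wasserstein step is routine provided $r$ is chosen after $n$ and $\alpha$ are fixed.
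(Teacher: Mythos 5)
Your proof is correct, and it follows the same broad plan as the paper (reduce $k_\alpha^n$ to the exponential moment $\sup_h \int b^{-\alpha h(gx_0)}\,d\mu^n$ via Proposition~\ref{VisualMetric}, get a contraction for $\mu$ by Taylor expansion plus Lemma~\ref{unifConv}, and transfer to nearby $\mu_1$ via Lemma~\ref{isHolder} and the convolution estimate for $W_\alpha$). The one genuine difference is how the prefactor $C(\delta)^\alpha \geq 1$ is beaten. The paper settles for a mild lower bound on $\int h(gx_0)\,d\mu^{n_0}$ (namely $\geq 1/\log b$) at some $n_0$, obtains $\sup_h\int b^{-\alpha h(gx_0)}\,d\mu_1^{n_0}\leq\rho^*<1$, and then invokes submultiplicativity of $n\mapsto\sup_h\int b^{-\alpha h(gx_0)}\,d\mu_1^n$ to get geometric decay $C\sigma^n$, so that $C(\delta)^\alpha C\sigma^n<1$ only after choosing $n$ much larger than $n_0$. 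You instead front-load the choice of $n$ so that the drift-driven linear term $\alpha n\log(b)\ell(\mu)/2$ already dominates $\alpha\log C(\delta)$, and then shrink $\alpha$ to kill the quadratic remainder; this makes $C(\delta)^\alpha\sup_h\int b^{-\alpha h(gx_0)}\,d\mu^n<1$ in one shot, with no appeal to submultiplicativity. Your version is arguably cleaner as a self-contained argument (and your explicit observation about the quantifier order is exactly the point); the paper's version has the minor advantage of also producing geometric decay $C\sigma^n$ of the moment, which is a slightly stronger conclusion. The Cauchy--Schwarz step you use to split $b^{-\alpha[h_\xi+h_\eta]/2}$ into the single-horofunction supremum is the right way to justify the inequality $k_\alpha^n(\mu_1)\le C(\delta)^\alpha\sup_h\int b^{-\alpha h(gx_0)}d\mu_1^n$, which the paper asserts without spelling out.
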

	
	\begin{proof}
		By Lemma \ref{unifConv}, for every $h\in X^h$
		\begin{equation*}
			\limsup_{n \to \infty}
			\frac{1}{n} \int_G  h(gx_0) d\mu^n(g) = \ell(\mu)>0.
		\end{equation*}
		In particular, there exists $n_0\in \mathbb{N}$ large enough such that $\int_G  h(gx_0) d\mu^{n_0}(g) \geq \frac{1}{\log(b)}>0$ for every $h\in X^h$. 
		
		Let $\mu \in \Prob_c(G)$ and $h\in X^h$. Use the inequality 
		\begin{equation*}
			b^x < 1 + \log(b)x + \log(b)^2\frac{x^2}{2} b^{|x|},
		\end{equation*}
		to obtain,
		\begin{align*}
			\int_G  b^{-\alpha h(gx_0)} d\mu^n(g)
			& \leq   1 - \alpha\log(b) \int_G h(gx_0)\mu^n(g) \\
			& \hspace{1cm}+ \log(b)^2\frac{\alpha^2}{2} \int_G h(gx_0)^2 b^{\alpha |h(gx_0)|} d\mu^n(g) \bigg) \\
			& \leq 1 - \alpha + \alpha^2 \log(\lambda)^{2n_0}\lambda^{n_0}/2.
		\end{align*}
		Hence there exists $0< \rho<1$ and  $\alpha$ small enough so that $\int_G  b^{-\alpha h(gx_0)} d\mu^n(g) \leq \rho$. Fix such $\alpha$ and $\rho$ for the remainder of the proof. 
	
		To extend this control to close measures notice that by the previous lemma $g\mapsto b^{-\alpha h(gx_0)}$ is $\alpha$-Hölder with Hölder constant $\lambda^{2\alpha}$ for every $h\in X^h$. So taking $\mu, \mu_1$ with $W_\alpha(\mu, \mu_1)\leq r$, where $r$ is at least smaller than $1$ and chosen later,
		\begin{align*}
			\left| \int_G b^{-\alpha h(gx_0)} \mu^{n_0}(g) - \int_G b^{-\alpha h(gx_0)} \mu_1^{n_0}(g) \right| & 
			\leq \lambda^{2\alpha} W_\alpha (\mu^{n_0}, \mu_1^{n_0}) \\
			& \leq W_\alpha(\mu, \mu_1)\sum_{i=0}^{n-1} C(\delta)^{i\alpha}\lambda^{(i+2)\alpha}.
		\end{align*}
		So we can now choose $r$ small enough to ensure there exists $\rho^* \in (\rho,1)$ 
		\begin{align*}
			\left| \int_G b^{-\alpha h(g^{-1}x_0)} \mu^{n_0}(g) - \int_G b^{-\alpha h(g^{-1}x_0)} \mu_1^{n_0}(g) \right| \leq \rho^* - \rho.
		\end{align*}
		Hence
		\begin{align*}
			\int_G b^{-\alpha h(gx_0)} \mu_1^{n_0}(g) & \leq  \int_G b^{-\alpha h(gx_0)} \mu^{n_0}(g) + \left| \int_G b^{-\alpha h(gx_0)} \mu^{n_0}(g) - \int_G b^{-\alpha h(gx_0)} \mu_1^{n_0}(g) \right| \\
			& \leq \rho^* <1
		\end{align*}
		
		Due to the submultiplicativity, picking $\sigma = (\rho^*)^{\frac{1}{n_0}}$, for every $n \in \mathbb{N}$ there exists a constant $C>0$ such that
		\begin{equation*}
			\sup_{h\in X^h}\int_G  b^{-\alpha h(gx_0)} d\mu^n(g) < C \sigma^n.
		\end{equation*}
		Finally  as observed before we now have
		\begin{equation*}
			k_\alpha^n(\mu_1) \leq  C(\delta)^\alpha\, C \, \sigma^n,
		\end{equation*}
		for every $\mu_1$ with $W_\alpha(\mu, \mu_1)<\delta$. In particular, there exists $n\in \mathbb{N}$ for which this quantity is smaller than $1$. 
	\end{proof}
	
	\subsection{Continuity}
	
	In the previous section we have proven that in a neighbourhood of every irreducible measure in $G$ with positive drift all measures admit  a unique stationary measure in $\partial X$. In the next Lemma we explore how the stationary measures behave under perturbations on the measure in $G$.
	
	\begin{lemma}
		Given $\lambda>0$, let $\mu_1, \mu_2\in \Prob_c(G_\lambda)$ and $\nu_{\mu_1}, \nu_{\mu_2} \in \Prob(\partial X)$ their respective stationary measures. Suppose for some $0< \alpha \leq 1$, $\max \{ k_\alpha^n(\mu_1), k_\alpha^n(\mu_2)\} \leq k <1$ (in particular, $\nu_{\mu_1}$ and $\nu_{\mu_2}$ exist), then for every $n\in  \mathbb{N}$ and $f \in \mathcal{H}_\alpha(\partial X)$
		\begin{equation*}
			\left| \int_{\partial X} f d\nu_{\mu_1} - \int_{\partial X} f d\nu_{\mu_2} \right| \leq \frac{\upsilon_\alpha(f)}{1-k}W_\alpha(\mu_1, \mu_2).
		\end{equation*}
	\end{lemma}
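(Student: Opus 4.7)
The plan is to iterate the stationarity equation $\int f \, d\nu_{\mu_i} = \int Q_{\mu_i}^n f \, d\nu_{\mu_i}$ and telescope. Writing
\begin{equation*}
\int f \, d\nu_{\mu_1} - \int f \, d\nu_{\mu_2} = \int Q_{\mu_1}^n f \, d\nu_{\mu_1} - \int Q_{\mu_2}^n f \, d\nu_{\mu_2},
\end{equation*}
I would insert and subtract $\int Q_{\mu_2}^n f \, d\nu_{\mu_1}$; the resulting residue $\int Q_{\mu_2}^n f \, d\nu_{\mu_1} - \int Q_{\mu_2}^n f \, d\nu_{\mu_2}$ has the same shape as the initial quantity with $f$ replaced by $Q_{\mu_2}^n f$, which lets me unfold the identity $N$ times as
\begin{equation*}
\int f \, d\nu_{\mu_1} - \int f \, d\nu_{\mu_2} = \sum_{j=0}^{N-1} \int \left(Q_{\mu_1}^n - Q_{\mu_2}^n\right) Q_{\mu_2}^{jn} f \, d\nu_{\mu_1} + \int Q_{\mu_2}^{Nn} f \, d(\nu_{\mu_1} - \nu_{\mu_2}).
\end{equation*}
By Proposition \ref{uniquenessProp}, $Q_{\mu_2}^{Nn} f$ converges uniformly to the constant $\left(\int f \, d\nu_{\mu_2}\right)\mathbf{1}$ as $N \to \infty$, and a constant integrates identically against probability measures, so the remainder vanishes and only the infinite series survives.

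Next, I would estimate each summand. For fixed $\xi \in \partial X$ the map $g \mapsto Q_{\mu_2}^{jn} f(g^{-1}\xi)$ on $G$ is $\alpha$-Hölder with constant at most $\upsilon_\alpha(Q_{\mu_2}^{jn} f)$, because $D_b(g_1^{-1}\xi, g_2^{-1}\xi) \leq d_G(g_1, g_2)$ straight from the definition of $d_G$. The definition of $W_\alpha$ then yields
\begin{equation*}
\left|\left(Q_{\mu_1}^n - Q_{\mu_2}^n\right) Q_{\mu_2}^{jn} f(\xi)\right| \leq \upsilon_\alpha\left(Q_{\mu_2}^{jn} f\right) \, W_\alpha\left(\mu_1^n, \mu_2^n\right),
\end{equation*}
and combining Lemma \ref{lemmaContraction} with the submultiplicativity of Lemma \ref{submultiplicative} gives $\upsilon_\alpha(Q_{\mu_2}^{jn} f) \leq k_\alpha^n(\mu_2)^j \, \upsilon_\alpha(f) \leq k^j \, \upsilon_\alpha(f)$. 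Summing the geometric series $\sum_{j \geq 0} k^j = 1/(1-k)$ then bounds the left-hand side by $\frac{\upsilon_\alpha(f)}{1-k} W_\alpha(\mu_1^n, \mu_2^n)$.

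To match the statement as written, one either specialises to the iteration step $n=1$ (in which case the Wasserstein distance that appears is already $W_\alpha(\mu_1, \mu_2)$), or invokes the earlier corollary $W_\alpha(\mu_1^n, \mu_2^n) \leq W_\alpha(\mu_1, \mu_2) \sum_{i=0}^{n-1} C(\delta)^{i\alpha} \lambda^{i\alpha}$ and absorbs the extra factor into constants governing the neighbourhood of $\mu$. The main obstacle, more than any individual bound, is justifying the vanishing of the tail term, which I would handle through the uniform convergence supplied by the spectral splitting in the proof of Proposition \ref{uniquenessProp}; once that is granted, the rest is routine bookkeeping of the Hölder seminorm along the iteration.
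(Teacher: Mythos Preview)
Your argument is correct. The paper reaches the same bound by a different but closely related route: it telescopes at the \emph{operator} level,
\[
Q_{\mu_1}^n f - Q_{\mu_2}^n f \;=\; \sum_{i=0}^{n-1} Q_{\mu_2}^{\,i}\,(Q_{\mu_1}-Q_{\mu_2})\,Q_{\mu_1}^{\,n-1-i} f,
\]
bounds $\|Q_{\mu_1}^n f - Q_{\mu_2}^n f\|_\infty$ uniformly in $n$ by $\frac{\upsilon_\alpha(f)}{1-k}\,W_\alpha(\mu_1,\mu_2)$ using the same two ingredients you use (the Hölder--Wasserstein estimate $\|(Q_{\mu_1}-Q_{\mu_2})\varphi\|_\infty\le \upsilon_\alpha(\varphi)\,W_\alpha(\mu_1,\mu_2)$ and the contraction $\upsilon_\alpha(Q_{\mu_1}^{m}f)\le k^{m}\upsilon_\alpha(f)$), and only then lets $n\to\infty$ via $Q_{\mu_i}^n f\to(\int f\,d\nu_{\mu_i})\mathbf{1}$. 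Your version instead folds the stationarity identity into an infinite series over blocks of length $n$ and kills the tail with Proposition~\ref{uniquenessProp}. The trade-off: the paper's single-step telescoping produces $W_\alpha(\mu_1,\mu_2)$ directly with no detour through $W_\alpha(\mu_1^n,\mu_2^n)$, whereas your block-of-$n$ unfolding has the advantage that it only needs the contraction hypothesis $k_\alpha^n(\mu_i)\le k$ at the specific step $n$, not at every step --- which is exactly how the hypothesis is stated. Either way the substance is the same.
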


	\begin{proof}
		The Markov operators satisfy
		\begin{align*}
			\left|\left| Q_{\mu_1}f - Q_{\mu_2}f \right|\right|_\infty & \leq \sup_{ \xi \in \partial X} \left| \int_G f(g^{-1}\xi) d\mu_1(g) - \int_G f(g^{-1}\xi) d\mu_2(g) \right| \\
			& \leq \upsilon_\alpha(f) W_\alpha(\mu_1, \mu_2).
		\end{align*}
		For the powers we get
		\begin{align*}
			\left|\left| Q_{\mu_1}^nf - Q_{\mu_2}^nf \right|\right|_\infty & \leq \sum_{i=0}^n ||Q_{\mu_2}^i (Q_{\mu_1} - Q_{\mu_2})(Q_{\mu_1}^{n-i-1}(f))||_\infty \\
			& \leq \sum_{i=0}^n ||(Q_{\mu_1} - Q_{\mu_2})(Q_{\mu_1}^{n-i-1}(f))||_\infty \\ 
			& \leq W_\alpha(\mu_1, \mu_2) \sum_{i=0}^n \upsilon_\alpha(Q_{\mu_1}^{n-i-1}(f)) \\
			& \leq  W_\alpha(\mu_1, \mu_2) \upsilon_\alpha(f) \sum_{i=0}^n k^{n-i-1}\\
			& \leq \frac{\upsilon_\alpha(f)}{1-k}W_\alpha(\mu_1, \mu_2). 
		\end{align*}
		Now $\lim_{n\to \infty} Q_{\mu_1}f = (\int_{\partial X} f d\nu_{\mu_1} )\mathbf{1}$ and $\lim_{n\to \infty} Q_{\mu_2}f = (\int_{\partial X} f d\nu_{\mu_2} )\mathbf{1}$ so
		\begin{equation*}
			\left| \int_{\partial X} f d\nu_{\mu_1} - \int_{\partial X} f d\nu_{\mu_2} \right| \leq \sup_n \left|\left| Q_{\mu_1}^nf - Q_{\mu_2}^nf \right|\right|_\infty,
		\end{equation*}
		from which we obtain the result.
	\end{proof}

	\begin{proof}[Proof of Theorem \ref{continuity}]
		Let $\nu\in \Prob( \partial X)$, for every $g, g' \in G_\lambda$, applying the mean value theorem with $x\mapsto b^{-\alpha x}$ as well as Lemma \ref{isHolder}
		\begin{align*}
			\bigg|\int_{\partial X}h_\xi(gx_0) d\nu(\xi) &- \int_{\partial X}h_\xi(g'x_0)d\nu(\xi)\bigg|  \leq  \int_{\partial X} \left| h_\xi(gx_0) -  h_\xi(g'x_0)\right| d\nu(\xi)\\
			&\leq \frac{\max \{b^{d(gx_0,x_0)}, b^{d(g'x_0,x_0)}\}}{\alpha \log(b)}\int_{\partial X}  |b^{-\alpha h_\xi(gx_0)}- b^{-\alpha h_\xi(g'x_0)}|  d\nu(\xi)\\
			& \leq \frac{\lambda}{\alpha \log(b)}\int_{\partial X}  |b^{-\alpha h_\xi(gx_0)}- b^{-\alpha h_\xi(g'x_0)}|  d\nu(\xi)\\
			& \leq \frac{\lambda^3}{\alpha \log(b)}d_G(g,g')^\alpha,
		\end{align*}
		in particular, the map $g\mapsto \int_{\partial X} h(g^{-1}x_0) d\nu(g)$ is Hölder continuous. 
		
		Let $\mu\in \Prob_c(G_\lambda)$ be irreducible with $\ell(\mu)>0$. Then there exist $0<\alpha \leq 1$ and a neighbourhood of $\mu$ in which all measures satisfy $k_\alpha^n(\mu_1) <1$. Let $\mu_1,\, \mu_2$ be in a neighbourhood of $\mu$ and $\nu_{\mu_1},\, \nu_{\mu_2}$ their respective stationary measures.
		Using the Furstenberg type formula.
		\begin{align*}
			|\ell(\mu_1)-\ell(\mu_2)| & \leq \left| \int_G \int_{\partial X} h_\xi(gx_0)d\nu_{\mu_1}(\xi)d\mu_1(g) - \int_G \int_{\partial X} h_\xi(gx_0)d\nu_{\mu_2}(\xi)d\mu_2(g)\right|  \\
			& \leq \left| \int_G \int_{\partial X} h_\xi(gx_0)d\nu_{\mu_1}(\xi)d\mu_1(g) - \int_G \int_{\partial X} h_\xi(gx_0)d\nu_{\mu_1}(\xi)d\mu_2(g)\right| \\
			& \hspace{1cm} + \left| \int_G \int_{\partial X} h_\xi(gx_0)d\nu_{\mu_1}(\xi)d\mu_2(g) - \int_G \int_{\partial X} h_\xi(gx_0)d\nu_{\mu_2}(\xi)d\mu_2(g)\right| 	\\
			& \leq \frac{\lambda^3}{\alpha \log(b)}W_\alpha(\mu_1, \mu_2) +  \frac{\lambda^3}{\alpha \log(b) (1-k)}W_\alpha(\mu_1, \mu_2).
		\end{align*}
	
	\end{proof}
	\section{Large Deviation estimates}
	
	In this section we obtain the large deviations. Although the method used is based in Nagaev's \cite{nagaev1957some}, we will apply Duarte and Klein's recipe \cite{duarte2016lyapunov}. In §\ref{theMethod} we describe the recipe and ready the ingredients laid by Duarte and Klein whilst §4.2 is devoted to proving the large deviations.

	Let us recall the reader once more that $X$ stands for a $\delta$-hyperbolic metric space with a basepoint $x_0$, $G$ for its groups of isometries and $b$ for a real number between $1$ and $2^{1/\delta}$.
	
	\subsection{The method}
	\label{theMethod}
	
	In this section let $\Sigma$ denote a metric space and $L^\infty(\Sigma)$ its space of Borel measurable functions bounded in the sup-norm $||\cdot ||_\infty$.
	
	\begin{definition}[Markov Kernel and Operator]
		A Markov kernel is a function $K:\Sigma \to \Prob(\Sigma)$ $\omega_0 \mapsto K(\omega_0, \cdot)$ such that for every Borel measurable set $E$ the function $\omega_0 \mapsto K(\omega, E)$ is measurable. Each Markov kernel $K$ determines a linear operator $Q_K:L^\infty(\Sigma)\to L^\infty(\Sigma)$ given by
		\begin{equation*}
			(Q_K \varphi)(\omega_0)=\int_\Sigma \varphi(\omega_1)K(\omega_0, d\omega_1).
		\end{equation*}
	
		One defines the iterated Markov kernels in a recursive manner: $K^1 = K$ and
		\begin{equation*}
			K^n(\omega_0,E) :=\int_{\Sigma} K^{n-1}(\omega_1, E)K(\omega_0, d\omega_1),
		\end{equation*}
		where $E\subset \Sigma$ stands for a borel measurable set and $n\geq 2$. 
	\end{definition}

	In the previous section we defined the Markov operator for a given measure $\mu \in \Prob_c(G)$. The name is motivated by this definition by picking in $\partial X$ the Markov kernel
	\begin{equation*}
		K(\xi, E) :=\int_G \delta_{g^{-1}\xi}(E) d\mu(g).
	\end{equation*}
	Just as in the random case we also have the relation $Q_K^n = Q_{K^n}$ in this more general setting.
	
	\begin{definition}[Stationary Measure]
		Given a Markov kernel $K$, a probability measure $\mu$ on $\Sigma$ is $K$-stationary if for every Borel measurable subset $E\in \Sigma$,
		\begin{equation*}
			\mu(E) = \int_\Sigma K(\omega_0, E)\mu(d\omega_0).
		\end{equation*}
	\end{definition}
	
	A Markov system is a pair $(K,\mu)$, where $K$ is a Markov kernel on some metric space $\Sigma$ and $\mu$ is a $K$-stationary probability measure.
	
	Given $(K,\mu)$ a Markov system on $\Sigma$, consider $\Omega = \Sigma^\mathbb{N}$ the space of sequences $\omega=(\omega_n)$ in $\Sigma$. The product space $\Omega$ is metrizable and its Borel $\sigma$-algebra  $\mathcal{B}$ is the product $\sigma$-algebra generated by the cylinders, that is, generated by the sets
	\begin{equation*}
		C(E_0,...,E_m) := \{ \omega \in \Omega \, : \, \omega_j \in E_j, \textrm{ for } 0\leq j \leq m\},
	\end{equation*}
	where $E_0,...,E_m $ are Borel measurable in $\Sigma$.
	
	The set of $\mathcal{F}$-cylinders forms a semi-algebra on which
	\begin{equation*}
		\mathbb{P}_\mu [C(E_0,...,E_m)]:=\int_{E_m}... \int_{E_0} \mu(d\omega_0) \prod_{j=1}^{m} K(\omega_{j-1},d\omega_j).
	\end{equation*}
	defines a pre-measure. By Carathéodory's extension theorem, it extends to a measure, still denoted $\mathbb{P}_\mu$ and often called the Kolmogorov extension, on $(\Omega, \mathcal{B})$.
	
	Given a random variable $\zeta:\Omega \to \mathbb{R}$, its expected value with respect to $\mu$ in $\Prob(\Sigma)$ is
	\begin{equation*}
		\mathbb{E}_\mu(\zeta) :=\int_\Omega \zeta d\mathbb{P}_\mu.
	\end{equation*}
	If $\mu$ is $\delta_{\omega_0}$ the Dirac measure at $\omega_0$, then we soften the notation by setting $\mathbb{P}_{\omega_0} = \mathbb{P}_{\delta_{\omega_0}}$.

	Consider a Markov system $(K, \mu)$ on a metric space $\Sigma$. Given some Borel measurable observable $\zeta:\Sigma \to \mathbb{R}$, let $\hat{\zeta}:\Omega \to \mathbb{R}$ be the Borel measurable function $\hat{\zeta}(\omega) = \zeta(\omega_0)$. We call a sum process of $\zeta:\Sigma \to \mathbb{R}$ the sequence of random variables $\{S_n(\zeta)\}$ on $(\Omega, \mathcal{B})$,
	\begin{equation*}
		S_n(\zeta)(\omega) := \sum_{i=0}^{n-1} \hat{\zeta} \circ T^i(\omega) = \sum_{i=0}^{n-1} \zeta(\omega_i),
	\end{equation*}
	where $T$ denotes the shift map.
	
	Let $(B, ||\cdot ||_B)$ be a Banach algebra such that $B\subset L^\infty(\Sigma)$, there exists a seminorm $\upsilon_B$ in $B$ such that $||\cdot||_B \leq ||\cdot ||_\infty + \upsilon_B$, for every $f\in B$, $\bar{f}$ and $|f|$ also belong to $B$ and so does $\mathbf{1}$, moreover $\upsilon_B(f)=0$ if and only if $f$ is constant. Denote by $L(B)$ the Banach algebra of bounded linear operators $Q:B\to B$ with both its operator norm $|||Q|||_B:= \sup \{ ||Qv||_B \, : \, ||v||_B = 1 \}$ 
	
	\begin{definition}
		We say that a Markov kernel acts simply and quasi-compactly on a Banach algebra $B$ as above if there are constants $C<\infty$ and $0<\sigma <1$	such that for every $f\in B$ and $n\geq 0$,
		\begin{equation*}
			\left| \left| Q_K^nf - \left(\int_\Sigma f d\mu \right)\mathbf{1} \right| \right|_B \leq C \sigma^n ||f||_B.
		\end{equation*}
	\end{definition}
	
	\begin{theorem}[Theorem 5.4 in \cite{duarte2016lyapunov}]
		\label{LDTT}
		Let $(K,\mu)$ be a Markov system acting simply and quasi-compactly on a Banach algebra $B\subset L^\infty(\Sigma)$ as above. Then given $\zeta \in B$, there are constants $k, \varepsilon_0>0$ and $C<\infty$ such that for all $\omega_0 \in \Sigma$, $0 < \varepsilon < \varepsilon_0$ and $n\in \mathbb{N}$
		\begin{equation*}
			\mathbb{P}_{\omega_0} \left[ \left| \frac{1}{n}S_n(\zeta) - \mathbb{E}_\mu(\zeta) \right| \geq \varepsilon \right] \leq Cb^{-k \varepsilon ^2n}.
		\end{equation*} 
		Moreover, the constants $C, k, \varepsilon_0$ depend on $|||Q_K|||_B$, $\upsilon_B(\zeta)$ as well as the constants $C$ and $\sigma$ controlling the simple and quasi-compact behaviour.
	\end{theorem}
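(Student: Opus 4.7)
The plan is to apply Nagaev's spectral method, which converts the tail probability into an eigenvalue perturbation problem for the Markov operator $Q_K$. For $z$ in a small complex neighbourhood of $0$ I would define the twisted operator $Q_z : B \to B$ by
\begin{equation*}
(Q_z f)(\omega_0) := b^{z \zeta(\omega_0)}\, (Q_K f)(\omega_0),
\end{equation*}
so that $Q_0 = Q_K$. Since $B$ is a Banach algebra containing $\zeta$, the exponential $b^{z\zeta}$ is a norm-convergent power series in $z$, hence $z \mapsto Q_z$ is analytic as an $L(B)$-valued map. The key identity, proved by induction on $n$, is $(Q_z^n \mathbf{1})(\omega_0) = \mathbb{E}_{\omega_0}[b^{z S_n(\zeta)}]$, linking probabilistic tails to the spectral behaviour of $Q_z$.

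Next, by the simple and quasi-compact hypothesis, $Q_K$ admits a decomposition $Q_K = \Pi + R$ in $L(B)$, where $\Pi f = (\int f\, d\mu)\mathbf{1}$ is the rank-one projector onto the eigenspace of the simple eigenvalue $1$ and $\|R^n\|_{L(B)} \leq C \sigma^n$. Standard analytic perturbation theory (Kato, Dunford--Schwartz) then provides, for $|z|$ small, a unique eigenvalue $\lambda(z)$ of $Q_z$ depending analytically on $z$ with $\lambda(0) = 1$, together with an analytic rank-one spectral projection $\Pi_z$ and a remainder $R_z$ of spectral radius $\leq \sigma_1 < 1$. Differentiating the eigenvalue relation at $z = 0$ and using the $\mu$-stationarity of $\Pi$ gives $\lambda'(0) = \log(b)\, \mathbb{E}_\mu(\zeta)$, so $\log_b \lambda(z) = \mathbb{E}_\mu(\zeta)\, z + O(z^2)$ for real $z$ near $0$.

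Finally, I would combine these ingredients with a Chernoff-type inequality: for $z > 0$ small and $\varepsilon > 0$,
\begin{equation*}
\mathbb{P}_{\omega_0}\!\left[\tfrac{1}{n}S_n(\zeta) - \mathbb{E}_\mu(\zeta) > \varepsilon\right] \leq b^{-z n(\mathbb{E}_\mu(\zeta)+\varepsilon)} \, (Q_z^n \mathbf{1})(\omega_0) \leq C_2\, b^{n(\log_b \lambda(z) - z(\mathbb{E}_\mu(\zeta)+\varepsilon))} + C_3\, \sigma_1^n.
\end{equation*}
The exponent of the first term expands to $n(-z\varepsilon + O(z^2))$, so choosing $z = c\varepsilon$ for a fixed small $c > 0$ yields the announced rate $b^{-k\varepsilon^2 n}$; the remainder $\sigma_1^n$ is dominated by this rate as soon as $\varepsilon < \varepsilon_0$ with $\varepsilon_0$ small enough so that $\sigma_1 < b^{-k\varepsilon_0^2}$. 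The symmetric lower-tail estimate is obtained by the same argument with $z < 0$ (or equivalently by replacing $\zeta$ with $-\zeta$). The main obstacle is tracking the quantitative dependence of the spectral data $\lambda(z), \Pi_z, R_z$ on only the four quantities listed in the statement, namely $|||Q_K|||_B$, $\upsilon_B(\zeta)$, $C$ and $\sigma$. This uniformity is essential because, in order to deduce Theorem~\ref{LDT} from this abstract result, one must apply the estimate across a neighbourhood of perturbed measures $\mu_1$ without the constants degenerating; producing such quantitative perturbation bounds (rather than merely qualitative analyticity) is the technical heart of the argument.
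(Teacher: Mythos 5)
The paper does not prove this theorem: it imports it verbatim as Theorem 5.4 of Duarte--Klein \cite{duarte2016lyapunov}, whose argument is precisely the Nagaev spectral method you outline (twisted transfer operator $Q_z$, the identity $(Q_z^n\mathbf{1})(\omega_0)=\mathbb{E}_{\omega_0}[b^{zS_n(\zeta)}]$, analytic perturbation of the simple leading eigenvalue with $\lambda'(0)=\log(b)\,\mathbb{E}_\mu(\zeta)$, and a Chernoff bound with $z\asymp\varepsilon$). Your reconstruction therefore matches the cited proof in all essentials, and you correctly flag the one genuinely delicate point --- tracking the perturbation constants quantitatively in terms of $|||Q_K|||_B$, $\upsilon_B(\zeta)$, $C$, $\sigma$ so the estimate survives perturbing $\mu$ --- as the technical heart; the only small imprecision is that in the final display the factor $b^{-zn(\mathbb{E}_\mu(\zeta)+\varepsilon)}$ should also multiply the remainder term $C_3\sigma_1^n$, which is harmless since $z=c\varepsilon$ is small enough that $b^{z|\mathbb{E}_\mu(\zeta)+\varepsilon|}\sigma_1<1$.
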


	\subsection{Obtaining the Large Deviation Estimates}
	\label{deviations}
	
	Let $\mu\in \Prob_c(G)$, given $0 \leq \alpha \leq 1$ and $f\in L^\infty(G_\lambda \times \partial X)$, define
	\begin{align*}
		\upsilon_\alpha^\mu(f) & :=  \sup_{\substack{g\in \supp \mu,  \\ \xi \neq \eta \in \partial X} }  \frac{|f(g, \xi)-f(g,\eta)|}{D_b(\xi, \eta)^\alpha},\\
		||f||_\alpha & := ||f||_\infty + \upsilon_\alpha^\mu(f),
	\end{align*}
	and set 
	\begin{equation*}
		\mathcal{H}_\alpha(G_\lambda \times \partial X) : = \left\{\ f\in L^\infty(G_\lambda \times \partial X) \, : \, ||f||_\alpha < \infty \right\}.
	\end{equation*}
	the space of boundary Hölder continuous functions in $G_\lambda \times \partial X$ which once again is a Banach algebra with unity $\mathbf{1}$. In $G_\lambda \times \partial X$ we define the Markov kernel
	\begin{equation*}
		K_\mu(g, \xi, E) := \int_G \delta_{g',g^{-1}\xi}(E) d\mu (g'), 
	\end{equation*}
	which defines the Markov operator $Q_{K_\mu}:L^\infty(G_\lambda \times \partial X) \to L^\infty(G_\lambda \times \partial X)$
	\begin{equation*}
		(Q_Kf)(g, \xi) = \int_G f(g', g^{-1} \xi) d\mu(g').
	\end{equation*}
	
	We shall make use of the relation
	\begin{align*}
		(Q_{K_\mu}^nf)(g_0, \xi) & = \int_G (Q_{K_\mu}^{n-1}f)(g_1, g_0^{-1}\xi) d\mu(g_1) \\
		& = \int_G \int_G (Q_K^{n-2}f)(g_2, (g_0g_1)^{-1}\xi) d\mu(g_1)d\mu(g_2)\\
		&  = ... \\
		& = \int_G \int_G f(g_n, (g_0g)^{-1}\xi) d\mu^{n-1}(g) d\mu(g_n),
	\end{align*}
	to obtain the following Proposition.
	
	\begin{proposition}
		\label{Quasi-compact}
		Given $\lambda >0$ , for every $\mu \in \Prob_c(G_\lambda)$, $0<\alpha \leq 1$, $f\in \mathcal{H}_\alpha(G_\lambda \times \partial X)$ and $n \in \mathbb{N}$
		\begin{equation*}
			\upsilon_\alpha (Q_{K_\mu}^n f) \leq C(\delta)^\alpha \lambda^\alpha k_\alpha^{n-1}(\mu) \upsilon_\alpha(f).
		\end{equation*}
	\end{proposition}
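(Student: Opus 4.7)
The plan is to use the explicit expansion for $Q_{K_\mu}^n$ displayed immediately before the proposition,
\begin{equation*}
(Q_{K_\mu}^n f)(g_0, \xi) = \int_G \int_G f\bigl(g_n, (g_0 g)^{-1}\xi\bigr)\, d\mu^{n-1}(g)\, d\mu(g_n),
\end{equation*}
and estimate $|(Q_{K_\mu}^n f)(g_0, \xi) - (Q_{K_\mu}^n f)(g_0, \eta)|$ directly, for $g_0 \in \supp\mu$ and $\xi \neq \eta$ in $\partial X$. First, I take the difference inside both integrals; since for each fixed $g_n \in \supp\mu$ the partial function $f(g_n, \cdot)$ is $\alpha$-Hölder with constant at most $\upsilon_\alpha(f)$, this produces the bound
\begin{equation*}
\bigl|(Q_{K_\mu}^n f)(g_0, \xi) - (Q_{K_\mu}^n f)(g_0, \eta)\bigr| \leq \upsilon_\alpha(f) \int_G D_b\bigl((g_0 g)^{-1}\xi,\, (g_0 g)^{-1}\eta\bigr)^\alpha \, d\mu^{n-1}(g).
\end{equation*}

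Next I separate the action of $g_0$ from the action of the walk. Setting $\xi' := g_0^{-1}\xi$ and $\eta' := g_0^{-1}\eta$, I have $(g_0 g)^{-1}\xi = g^{-1}\xi'$, so writing the integrand as the distortion ratio times $D_b(\xi', \eta')^\alpha$ and applying the very definition of $k_\alpha^{n-1}(\mu)$ yields
\begin{equation*}
\int_G D_b(g^{-1}\xi', g^{-1}\eta')^\alpha \, d\mu^{n-1}(g) \leq k_\alpha^{n-1}(\mu)\, D_b(g_0^{-1}\xi, g_0^{-1}\eta)^\alpha.
\end{equation*}
It remains to compare $D_b(g_0^{-1}\xi, g_0^{-1}\eta)$ with $D_b(\xi, \eta)$, and this is precisely Proposition \ref{VisualMetric} applied with the isometry $g_0^{-1}$: it contributes a factor $C(\delta)\, b^{-\frac{1}{2}[h_\xi(g_0 x_0) + h_\eta(g_0 x_0)]}$. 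Since horofunctions are $1$-Lipschitz and vanish at $x_0$, one has $-h_\xi(g_0 x_0), -h_\eta(g_0 x_0) \leq d(g_0 x_0, x_0)$, so the exponential factor is bounded by $b^{d(g_0 x_0, x_0)} < \lambda$ because $g_0 \in \supp \mu \subset G_\lambda$.

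Multiplying these estimates, dividing by $D_b(\xi, \eta)^\alpha$, and taking the supremum over $g_0 \in \supp\mu$ and $\xi \neq \eta \in \partial X$ gives the stated bound $C(\delta)^\alpha \lambda^\alpha k_\alpha^{n-1}(\mu)\, \upsilon_\alpha(f)$. The computation is essentially mechanical once the pieces are lined up; the only point that deserves care is the asymmetric treatment of the first and the remaining $n-1$ steps of the walk. The factor $\lambda$ (rather than $\lambda^n$) arises because only the outermost isometry $g_0$ is absorbed through the visual metric estimate of Proposition \ref{VisualMetric}, while the combined distortion of the $n-1$ interior steps is packaged wholesale into the spectral quantity $k_\alpha^{n-1}(\mu)$ via the change of variables $\xi \mapsto g_0^{-1}\xi$.
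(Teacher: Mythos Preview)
Your proof is correct and follows essentially the same approach as the paper: expand $Q_{K_\mu}^n f$ via the displayed formula, bound the difference by Hölder continuity of $f$ in the second variable, split the distortion ratio into the $g_0$ factor (controlled by Proposition~\ref{VisualMetric} and the bound $b^{d(g_0x_0,x_0)}<\lambda$) times the remaining $n-1$ steps (packaged into $k_\alpha^{n-1}(\mu)$), and take the supremum. The paper's argument is line-for-line the same, only written as a single chain of inequalities rather than in prose.
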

	
	\begin{proof}
		Let  $g_0 \in \supp \mu$ and $\xi, \eta \in \partial X$,
		\begin{align*}
			\big|(Q_{K_\mu}^n f)(g_0, \xi) & -(Q_{K_\mu}^n f)(g_0, \eta) \big|  \leq \int_G \int_G \left| f(g_n, (g_0g)^{-1}\xi) - f(g_n, (g_0g)^{-1}\eta)  \right|d\mu^{n-1}(g) d\mu(g_n)  \\
			& \leq \upsilon_\alpha(f) \int_G D_b(g^{-1}g_0^{-1}\xi, g^{-1}g_0^{-1}\eta)^\alpha d\mu^{n-1}(g) \\
			& \leq \upsilon_\alpha(f) D_b(\xi, \eta)^\alpha  \left(\frac{D_b(g_0^{-1}\xi, g_0^{-1}\eta)}{D_b(\xi, \eta)} \right)^\alpha \int_G \left( \frac{D_b(g^{-1}g_0^{-1}\xi, g^{-1}g_0^{-1}\eta)}{D_b(g_0^{-1}\xi, g_0^{-1}\eta)} \right) ^\alpha d\mu^{n-1}(g) \\
			& \leq \upsilon_\alpha(f) k_\alpha^{n-1}(\mu)C(\delta)^\alpha b^{\alpha d(gx_0,x_0)}D_b(\xi, \eta)^\alpha.
		\end{align*}
		Passing $D_b(\xi, \eta)^\alpha$ to the left side and then taking the supremum over $\xi \neq \eta \in \partial X$ yields the result.
	\end{proof}
	
	Analogously to Proposition \ref{uniquenessProp}, given an irreducible measure $\mu\in \Prob_c(G_\lambda)$ with $\ell(\mu)>0$ there exists a unique $K_\mu$-stationary measure in $G_\lambda \times \partial X$. In particular it follows from an immediate computation that $\nu_\mu \in \Prob(\partial X)$ is the $\mu-$stationary measure if an only if $\mu \times \nu_\mu$ is the $K_\mu$-stationary.
	
	\begin{proof}[Proof of Theorem \ref{LDT} ]
		Let $\mu$ be an irreducible measure in $\Prob_c(G_\lambda)$ with $\ell(\mu)>0$ and $\nu_\mu \in \Prob(\partial X)$ be its stationary measure. Now take the set $\Omega \subset (G_\lambda \times \partial X)^\mathbb{N}$ consisting of sequences $\kappa = (\kappa_n) = (g_n, \xi_n)$ with $\xi_n = (g_0g_1...g_{n-1})^{-1} \xi_0$. Notice that $\Omega$ has full measure with respect to $\mathbb{P}_{\mu\times \nu_\mu}$. Next we consider the observables $\zeta \in L^\infty(G_\lambda \times \partial X)$
		\begin{equation*}
			\zeta(g, \xi) = h_\xi(gx_0),
		\end{equation*}
		whose sum, for each $\omega \in \Omega$ is
		\begin{align*}
			S_n(\zeta)(\omega) &= \sum_{i=0}^{n-1} \zeta(g_i, \xi_i)\\
			&= \sum_{i=0}^{n-1} h_{\xi_i}(g_ix_0)  \\
			& = \sum_{i=0}^{n-1} \omega^{-i} \cdot h_{\xi_0}(g_ix_0)\\
			&= \sum_{i=0}^{n-1} h_{\xi_0}(\omega^{i+1}x_0) - h_{\xi_0}(\omega^{i}x_0) \\
			& =  h_{\xi_0}(\omega^{n}x_0) - h_{\xi_0}(x_0) + \sum_{i=0}^{n-1} h_{\xi_0}(\omega^{i}x_0) - h_{\xi_0}(\omega^{i}x_0) \\
			& =  h_{\xi_0}(\omega^nx_0).
		\end{align*}

		By the Furstenberg type formula we also have
		\begin{equation*}
			\mathbb{E}_{\mu\times \nu_\mu} (\zeta) = \int_G \int_{\partial X} h_\xi(gx_0) d\nu_\mu(\xi) d\mu(g) = \ell(\mu), 
		\end{equation*}
		so we can prove the existence of large deviation estimates.
		
		By Proposition \ref{Quasi-compact}, $(K_\mu, \mu\times \nu_\mu)$ acts simply and quasi-compactly on $\mathcal{H}_\alpha(G_\lambda \times \partial X)$. Applying Theorem \ref{LDT} to the Markov kernel $(K, \mu\times \nu_\mu)$, there are constants  $\varepsilon_0, C, k>0$ such that for every $g_2\in V$, $0<\varepsilon<\varepsilon_0$, $(g_0, \xi) \in G_\lambda \times \partial X$ and $n\in \mathbb{N}$
		\begin{equation*}
			\mathbb{P}_{g_0} \left[ \left| \frac{1}{n} h_\xi(\omega^{n}x_0)- \ell(\mu) \right| > \varepsilon \right] \leq C b^{-k\varepsilon^2n},
		\end{equation*}
		which averaging over $g_0$ with respect to $\mu$ yields, for every $\xi \in \partial X$, 
		\begin{equation*}
			\mu^\mathbb{N} \left[ \left| \frac{1}{n} h_\xi(\omega^{n}x_0)- \ell(\mu) \right| > \varepsilon \right] \leq C b^{-k\varepsilon^2n}.
		\end{equation*}
		
		Now by Proposition \ref{Contract1}, $k_\alpha(\mu_1)^n<1$ for every $\mu_1$ neighbourhood of $\mu$, so again by Proposition \ref{Quasi-compact}, $(K_{\mu_1}, \mu_1 \times  \nu_{\mu_1})$ acts simply and quasi-compactly on $\mathcal{H}_\alpha(\supp \mu_1 \times \partial X)$. Now clearly $\upsilon_\alpha(\zeta) \leq 2\log \lambda$ and $|||Q_{K_{\mu_1}}|||_B\leq \max\{1, k_\alpha(\mu_1)\}$ which are all controlled by $\mu$, so we can keep the contants $C, k, \varepsilon_0$ in a neighbourhood of $\mu$.
	\end{proof}

	Corollary \ref{LDTdrift} is obtained through a direct application of Lemma \ref{comparisonLemma} and Theorem \ref{LDT}.

	\section{Furstenberg type formula}
	
	In this section we keep all the notations from before. Our goal is to prove Theorem \ref{furstenbergForm}, in fact the entire section is one big proof of this Theorem. We follow \cite{furstenberg1983random} closely whilst doing all the necessary adjustments. In this section compactness is once again essential to the argument so we go back to working with $X^h$ and then transport the result to $\partial X$. 
	
	Given $\mu\in \Prob_c(G)$ we begin by defining a Markov kernel in $\supp \mu \times X^h$
	\begin{equation*}
		K_\mu(g, h; E) := \int_G \delta_{(g',g^{-1}\cdot h)}(E) d\mu (g'), 
	\end{equation*}
	which defines the Markov operator $Q_{K_\mu}:L^\infty(\supp \mu \times  X^h) \to L^\infty(\supp \mu \times  X^h)$
	\begin{equation*}
		(Q_Kf)(g, h) = \int_G f(g', g^{-1} \cdot h) d\mu(g').
	\end{equation*}
	Just as before we take the set $\Omega \subset (\supp \mu \times X^h)^\mathbb{N}$ consisting of sequences $\kappa = (\kappa_n) = (g_n, h_n)$ with $h_n = (g_0g_1...g_{n-1})^{-1} \cdot h_0$. Notice that $\Omega$ has full measure with respect to $\mathbb{P}_{\mu\times \nu}$, where $\nu$ is any $\mu$-stationary measure. This time we take the observable $\zeta \in L^\infty(\supp \mu \times X^h)$ given by
	\begin{equation*}
		\zeta (g, h) = h(gx_0),
	\end{equation*}
	whose sum is $S_n(\zeta) = h_0(\omega^{n}x_0)$ for all $\omega \in \Omega$. As a matter of fact, $\zeta$ is actually continuous as we now prove.
	
	\begin{lemma}
		The observable $\zeta(g,h) = h(gx_0)$ with $(g,h)\in G\times X^h$ is continuous.
	\end{lemma}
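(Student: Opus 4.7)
My plan is to verify continuity by a direct triangle-inequality argument on the product topology of $G \times X^h$, where $G$ carries the metric $d_G$ and $X^h \subset \mathbb{R}^X$ carries the topology of pointwise convergence. Suppose $(g_n, h_n) \to (g, h)$, so that $d_G(g_n,g) \to 0$ and $h_n \to h$ pointwise. Then I would split
\begin{equation*}
|h_n(g_n x_0) - h(g x_0)| \leq |h_n(g_n x_0) - h_n(g x_0)| + |h_n(g x_0) - h(g x_0)|,
\end{equation*}
and treat the two terms separately.

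The second term tends to zero immediately, since $g x_0$ is a fixed element of $X$ and $h_n \to h$ in the topology of pointwise convergence. For the first term I would invoke the observation made right after the definition of $\rho$ that every horofunction is $1$-Lipschitz, giving
\begin{equation*}
|h_n(g_n x_0) - h_n(g x_0)| \leq d(g_n x_0, g x_0),
\end{equation*}
and then reduce to showing $d(g_n x_0, g x_0) \to 0$. Specialising $\xi = x_0 \in \bord X$ in the definition of $d_G$ yields
\begin{equation*}
D_b(g_n x_0, g x_0) \leq d_G(g_n, g) \to 0.
\end{equation*}

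The one subtle step, which I expect to be the only real obstacle, is passing from this $D_b$-convergence to honest $d$-convergence of the orbit points. This is where I would invoke the proposition stating that $D_b$ induces on $X$ the same topology as $d$: since $g x_0 \in X$ (and not at the boundary), the two notions of convergence agree for sequences landing at $g x_0$, so $d(g_n x_0, g x_0) \to 0$. Putting the two estimates together finishes the proof. The key point to highlight is that, without the target lying in $X$, $D_b$-convergence could in principle be caused by escape to the boundary along horoballs and would not force $d$-convergence; it is precisely because $g \in \isom(X)$ sends $x_0$ into $X$ that this cannot happen.
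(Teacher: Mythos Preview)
Your proposal is correct and follows essentially the same route as the paper's proof: the same triangle-inequality split, the $1$-Lipschitz bound on horofunctions for the first term, pointwise convergence for the second, and the passage from $D_b$-convergence to $d$-convergence on $X$. The only cosmetic difference is that the paper makes the last step explicit by noting that for large $n$ one has $D_b(g_nx_0,gx_0)=\log(b)\,d(g_nx_0,gx_0)$, whereas you simply invoke the proposition that $D_b$ induces the same topology as $d$ on $X$; these are the same observation.
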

	
	\begin{proof}
		Let $g_n \to g$ and $h_n \to h$ be converging sequences in $G$ and $X^h$, respectively. Then
		\begin{align*}
			\left| h_n(g_nx_0) - h(gx_0)  \right| & \leq \left| h_n(g_nx_0) - h_n(gx_0)  \right| + \left| h_n(gx_0) - h(gx_0)  \right| \\
			& \leq d(g_nx_0, gx_0) + \left| h_n(gx_0) - h(gx_0)  \right|.
		\end{align*}
		Due to the pointwise convergence $\left| h_n(g^{-1}x_0) - h(g^{-1}x_0)  \right| \to 0$ whilst, by definition of $D_b$, for $n$ large enough $D_b(g_nx_0, gx_0) = \log(b)d(g_nx_0, gx_0)$,  so continuity follows.
	\end{proof}
	
	Using Theorems 1.1 and 1.4 in \cite{furstenberg1983random} we now obtain the following result.
	
	\begin{theorem}
		\label{Furstenberg2.1}
		Let $\mu \in \Prob_c(G)$, for every horofunction $h\in X^h$ and $\mu^\mathbb{N}$-almost every $\omega$
		\begin{equation*}
			\limsup_{n \to \infty}\frac{1}{n}h(\omega^nx_0) \leq \sup \left\{ \int_G \int_{X^h} h(gx_0) d\nu(h)d\mu(g) \, : \, \mu \star \nu = \nu \right\} 
		\end{equation*}
		Moreover, if for all $\mu$-stationary measures $\nu$ the integral 
		\begin{equation*}
			\int_G \int_{X^h} h(gx_0) d\nu(h)d\mu(g)
		\end{equation*}
		takes the same value then for $\mu^\mathbb{N}$-almost every $\omega$
		\begin{equation*}
			\lim_{n \to \infty}\frac{1}{n}h(\omega^{n}x_0) = \int_G \int_{X^h} h(gx_0) d\nu(h)d\mu(g).
		\end{equation*}
	\end{theorem}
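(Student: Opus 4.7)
The plan is to invoke the general Markov-system framework together with Furstenberg's ergodic theorems from \cite{furstenberg1983random}. The preceding computation already identifies $S_n(\zeta)(\kappa) = h_0(\omega^n x_0)$ for $\zeta(g,h)=h(gx_0)$, and the lemma above shows $\zeta$ is continuous on the (compact) space $\supp \mu \times X^h$. Thus the theorem reduces to a statement about Birkhoff-type averages of a continuous observable against the Markov chain driven by $K_\mu$.

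First I would verify that $K_\mu$-stationary measures on $\supp\mu\times X^h$ are precisely the products $\mu\times\nu$ where $\nu\in\Prob(X^h)$ is $\mu$-stationary; one direction is a direct marginal computation, and the other follows by comparing marginals under the kernel. Existence of such $\nu$ comes from a Krylov–Bogolyubov argument exploiting compactness of $X^h$: for any $h\in X^h$, the Cesaro averages $\frac{1}{n}\sum_{i=0}^{n-1}\mu^i\star\delta_h$ admit weak-$*$ accumulation points, each of which is $\mu$-stationary, so we obtain a stationary measure $\nu_h$ attached to the starting horofunction $h$.

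Next, I would apply Furstenberg's Theorem 1.1 to the Markov system $(K_\mu,\mu\times\nu_h)$. The shift $T$ preserves $\mathbb{P}_{\mu\times\nu_h}$ and, after ergodic decomposition, the Birkhoff averages $\frac{1}{n}S_n(\zeta)$ converge $\mathbb{P}_{\mu\times\nu_h}$-a.s.\ to a value of the form $\int\zeta\,d\tilde{\nu}$ for some stationary $\tilde{\nu}$. Through the identification above this becomes
\[
\limsup_{n\to\infty}\frac{1}{n}h(\omega^n x_0)\leq\sup_{\mu\star\nu=\nu}\int_G\int_{X^h}h'(gx_0)\,d\nu(h')\,d\mu(g),
\]
and since the $\Omega$-marginal of $\mathbb{P}_{\mu\times\nu_h}$ is exactly $\mu^\mathbb{N}$, the bound holds $\mu^\mathbb{N}$-almost surely for each fixed starting horofunction $h$.

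For the equality statement, assume all stationary $\nu$ produce the same value $c$ of the integral. Then every ergodic stationary measure also returns $c$, so Birkhoff applied to each ergodic component forces $\frac{1}{n}S_n(\zeta)\to c$ a.s., upgrading the limsup to a full limit; this is the content of Theorem 1.4 in \cite{furstenberg1983random}. The main subtlety is the last bookkeeping step: the almost-sure statement holds under $\mathbb{P}_{\mu\times\nu_h}$ for $\nu_h$-a.e.\ second coordinate, whereas we need it for an arbitrary fixed $h$. This is precisely why one must allow $\nu_h$ to depend on $h$ via the Krylov–Bogolyubov construction, so that for each individual starting horofunction the argument can be run with a stationary measure containing $h$ in its support.
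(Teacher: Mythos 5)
Your overall strategy coincides with the paper's: set up the Markov kernel $K_\mu$ on $\supp\mu\times X^h$, check that $\zeta(g,h)=h(gx_0)$ is continuous on a compact space, identify $S_n(\zeta)(\kappa)=h_0(\omega^n x_0)$, and then invoke Theorems~1.1 and~1.4 of \cite{furstenberg1983random}. The paper is terse --- it performs exactly that setup, proves the continuity lemma, and then simply cites Furstenberg--Kifer for both the limsup bound and the upgrade to a limit. Your marginal computation showing that $K_\mu$-stationary measures are products $\mu\times\nu$ with $\nu$ $\mu$-stationary is a sound addition that the paper records elsewhere (in \S\ref{deviations}) rather than here.

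However, the last paragraph contains a genuine gap. You correctly flag that Birkhoff together with ergodic decomposition yields an almost-sure statement for $\nu_h$-a.e.\ second coordinate, whereas Theorem~\ref{Furstenberg2.1} must hold for \emph{every} fixed $h\in X^h$. But the proposed repair does not work: the Krylov--Bogolyubov accumulation point $\nu_h$ of $\frac{1}{n}\sum_{i<n}\mu^i\star\delta_h$ is not guaranteed to contain $h$ in its support, and even if it did, support membership does not promote a $\nu_h$-a.e.\ assertion to an assertion at the specific point $h$. The ``for every starting point'' conclusion is precisely the nontrivial content of Furstenberg--Kifer's Theorems~1.1 and~1.4 --- their hypotheses (compact state space, continuous observable, and in Theorem~1.4 the constancy of $\int\zeta\,d(\mu\times\nu)$ over all stationary $\nu$) are crafted so that the pointwise statement follows directly, via arguments internal to their proof such as upper semicontinuity of the Cesaro averages and ergodic decomposition of arbitrary weak-$*$ limits. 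Once you have verified compactness of $X^h$ and continuity of $\zeta$ (both of which you do), the cleanest move is to cite those theorems as applying to every initial $h$, exactly as the paper does, rather than re-deriving the pointwise upgrade from scratch.
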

	
	To finish the proof of the Fürstenberg type formula we now prove that if $\mu$ is irreducible, then second part of the theorem holds. To that end, assume there is a $\mu-$stationary ergodic measure $\eta$ such that 
	\begin{equation*}
	    \int_G \int_{X^h} h(gx_0)d\nu(h)d\mu(g) := \beta_1 < \beta =: \sup \left\{ \int_G \int_{X^h} h(gx_0) d\nu(h)d\mu(g) \, : \, \mu \star \nu = \nu \right\} .
	\end{equation*}
	 Consider now the map  
    \begin{align*}
        F:G^\mathbb{N} \times X^h &\to G^\mathbb{N} \times X^h \\
        (\omega, h) &\mapsto (\sigma\omega, g(\omega_0, \omega_1)^{-1}\cdot h)
    \end{align*}
    which preserves the ergodic measure $\mu^\mathbb{N}\times\eta$. The observable $\zeta$ can be extended to $\bar \zeta : G^\mathbb{N} \times X^h\to\mathbb{R}$, $\overbar\zeta(\omega, h)=\zeta(\omega_0,\omega_1,h)$. Moreover, with this notation, $(S_n\zeta)(\omega) =\sum_{j=0}^{n-1} \overbar\zeta(F^j(\omega,h_0)) $ is a Birkhoff sum. By Birkhoff's ergodic theorem, for $\eta$-almost every $h_0\in X^h$ and $\mathbb{P}$-almost every $\omega\in\Omega$,
    \begin{equation*}
        \lim_{n\to\infty} \frac{1}{n} h_0(g^{(n)}(\omega) x_0) = \lim_{n\to\infty} \frac{1}{n} \sum_{j=0}^{n-1} \bar\zeta(F^j(\omega,h_0)) = \beta_1
    \end{equation*}
    which together with the HMET implies that $\beta_1=-\ell(g)$ and $h_0\in X_-^h(\omega)$. Next consider the family of sets
    \begin{equation*}
        S:= \left\{ h\in X^h \, :  \, h\in X_-^h(\omega) \,\,\mu^\mathbb{N}\textrm{-almost surely}\right\} .
    \end{equation*}
    The previous argument shows that $S\neq \emptyset$ for $\mu^\mathbb{N}$-almost every $\omega\in G^\mathbb{N}$.  Again by the HMET and the basic assumption
    the set $S_{\omega_0}$ must be a single horofunction
    $S={h}$, chosen measurably.
    The invariance of $X_-^h$ in the HMET now implies that $g\cdot S=S$, which proves that $g$ is not irreducible. This contradiction implies that the claim is true.
	
	As a final remark, every $\mu$ stationary measure in $\Prob_c(\partial X)$ can be pushed forward to a measure in $\Prob_c(X_\infty^h)$ through the local minimum, which we then extend to all $X^h$. By equivariance, this new measure is still $\mu$ stationary, but now in $X^h$. Therefore, result follows in $\partial X$ as stated in Theorem \ref{furstenbergForm}
	
	\section*{Acknowledgements}

    The author was supported by the University of Lisbon, under the PhD scholarship program: BD2018. I would also like to thank my advisor, professor Pedro Duarte for reading this text more times than he probably should have. Finally I am indebted to Cagri Sert for his very helpful comments as well as finding an error on the initial version of the text.

	\bibliographystyle{plain}
	\bibliography{biblio}
	
	Departamento de Matemática, Faculdade de Ciências, Universidade de Lisboa, Portugal
	
	Email: lmsampaio@fc.ul.pt

\end{document}